\documentclass[11pt]{article}
\usepackage[margin=1in]{geometry}
\usepackage{mathtools, nccmath,mathrsfs, esint}
\RequirePackage{amsthm,amsmath,amsfonts,amssymb, bbm,bm}
\RequirePackage[numbers]{natbib}
\RequirePackage[colorlinks,citecolor=blue,urlcolor=blue]{hyperref}
\RequirePackage{graphicx}
\usepackage[normalem]{ulem}
\numberwithin{equation}{section}
\theoremstyle{plain}
\newtheorem{theorem}{Theorem}[section]
\newtheorem{lemma}[theorem]{Lemma}
\newtheorem{prop}[theorem]{Proposition}
\newtheorem{corollary}[theorem]{Corollary}
\theoremstyle{remark}
\newtheorem{remark}[theorem]{Remark}
\theoremstyle{definition}
\newtheorem{defn}[theorem]{Definition}
\newtheorem{assumption}[theorem]{Assumption}

\newtheorem{example}[theorem]{Example}

\newcommand{\beq}{\begin{equation}}
\newcommand{\eeq}{\end{equation}}
\def\beqs#1\eeqs{%
    \begin{equation}\begin{split}%
    #1%
    \end{split}\end{equation}%
}
\def\beqsn#1\eeqsn{%
    \begin{equation*}\begin{split}%
    #1%
    \end{split}\end{equation*}%
}
\def\beqsj #1\eeqsj
{
    \begin{equation}
    \setlength{\jot}{10pt}
    \begin{split}
    #1
    \end{split}\end{equation}
}

\newcommand{\A}{\mathcal A}

\newcommand{\balph}{{\bm\alpha}}
\newcommand{\bbet}{{\bm\beta}}

\newcommand{\e}{\,\exp}
\newcommand{\E}{\mathbb E\,}

\newcommand{\even}{\mathrm{even}}
\newcommand{\iid}{\stackrel{\mathrm{i.i.d.}}{\sim}}
\newcommand{\ind}{\mathbbm{1}}

\renewcommand{\l}{\left}
\newcommand{\la}{\langle}
\newcommand{\lla}{\l\la}
\newcommand{\les}{\lesssim}

\renewcommand{\r}{\right}
\newcommand{\R}{\mathbb R}
\newcommand{\ra}{\rangle}
\newcommand{\Rad}{R}

\newcommand{\Rem}{\mathrm{Rem}}

\newcommand{\rr}{r}
\newcommand{\rra}{\r\ra}

\newcommand{\T}{{\intercal}}
\newcommand{\maus}[1]{\nu_{#1}}

\newcommand{\U}{{\mathcal U}}
\renewcommand{\u}{z}

\newcommand{\V}{{\mathcal V}}

\newcommand{\xmin}{x_0}

\title{The Laplace asymptotic expansion in high dimensions}
\author{
 Anya Katsevich\thanks{This work was supported by NSF grant DMS-2202963.}\\
  \texttt{akatsevi@mit.edu}
 }
\begin{document}
\maketitle

\begin{abstract}
We prove that the classical Laplace asymptotic expansion (AE) of $\int_{\R^d} g(x)e^{-n\u(x)}dx$, $n\gg1$ in powers of $n^{-1}$ extends to the high-dimensional regime in which $d$ may grow large with $n$. More specifically, we use new techniques suitable to the high-dimensional regime to derive an AE which formally coincides with the classical one because the terms are the same, but which now has a new small parameter. Namely suppose $\u$ and $g$ satisfy standard assumptions made in the classical expansion and additionally, that $\|\nabla^k\u\|\leq d^{\lceil k/2\rceil-2}$ for all $k\geq3$ and $\|\nabla^kg\|\leq d^{\lceil k/2\rceil}$ for all $k\geq0$ in a neighborhood of the unique minimizer of $\u$. Under these conditions, we show the new small parameter is $d^2/n$, in the sense that $|\Rem_L|\leq C_L(d^2/n)^L$ for each $L=1,2,3,\dots$, where $\Rem_L$ is the error upon truncating the expansion to its first $L$ terms. As an example, we show that these derivative bounds are satisfied with high probability for a random function $\u$ arising in a standard statistical model. In fact, the derivative bounds can be relaxed. We introduce an arbitrary large parameter $\tau$ into the bounds on the derivative norms to relax the requirement on their growth and still obtain a valid AE, in powers of a ``larger" small parameter $\tau^2d^2/n$. The AE is valid whenever $n\gg \tau^2d^2$.

To prove these results, we derive a new and very general nonasymptotic bound on $\Rem_L$ which is explicit in its dependence on $g,\u,d,n$. The bound requires smoothness of $g$ and $\u$ but holds with nearly no apriori restrictions on the magnitude of the derivative tensor norms. We show the bound is tight for each $L$ by proving a matching lower bound in the case that $\u$ is a simple quartic function and $g\equiv1$. When $d,\u,g$ are fixed and $n\to\infty$, our bound shows that $\Rem_L=\mathcal O(n^{-L})$. Thus our work \emph{subsumes the classical theory of the Laplace expansion}, and significantly extends it into the high-dimensional regime. This broadened applicability of the Laplace expansion is extremely useful for the many modern applications requiring the computation of high-dimensional Laplace integrals. In settings where the expansion is already in use, our precise and explicit error bound is valuable both for numerical estimates and theoretical analysis, especially near the boundary of applicability of the expansion.

\end{abstract}

\section{Introduction}
The asymptotic expansion of Laplace-type integrals $\int_{\R^d}g(x)e^{-n\u(x)}dx$ as $n\to\infty$ is one of the cornerstones of asymptotic analysis. This result states~\cite[Chapter 9]{wongbook} that under certain smoothness and growth conditions on $\u$ and $g$, and assuming $\u$ has a unique global minimizer $\xmin$, there are coefficients $A_{2k}$ such that
\begin{align}\label{d-dim-laplace}
\int_{\R^d} g(x)e^{-n\u(x)}dx &=  \frac{(2\pi/n)^{d/2}}{e^{n\u(\xmin)}\sqrt{\det H}}\l(g(\xmin )+\sum_{k=1}^{L-1} A_{2k}n^{-k} + \Rem_L\r),\\
\Rem_L&= \mathcal O(n^{-L}),\quad n\to\infty.\label{RemL}
\end{align}Here, $H=\nabla^2\u(\xmin)$. 
Explicit expressions for the coefficients $A_{2k}$ are given in~\cite{nemes2013explicit,wojdylo2006computing} in the one-dimensional case, and have (surprisingly) only recently been obtained in the general $d$-dimensional case, by~\cite{kirwin2010higher}.

Laplace-type integrals arise in a number of contexts both within mathematics and in scientific and statistical applications, and the expansion~\eqref{d-dim-laplace}-\eqref{RemL} is an indispensable tool to both numerically compute and mathematically analyze these integrals. For example, Laplace-type integrals arise in statistical mechanics~\cite{ellis1982laplace,bach2000correlation}, optimization~\cite{carrillo2021consensus,tibshirani2024laplace}, the analysis of random chaos~\cite{korshunov2015asymptotic}, the computation of rare event probabilities e.g. in stochastic systems~\cite{schorlepp2023scalable} and in reliability engineering~\cite{rackwitz2001reliability}, and in statistics~\cite{rue2017bayesian}. See Section~\ref{sec:app} for more references in statistics.

The theory of the Laplace expansion (LE) dates back to Laplace in 1774 and is essentially fully understood in the classical regime in which dimension $d$ is fixed and $n\to\infty$. In fact, a full theory of the LE also exists in the \emph{infinite}-dimensional regime, due to~\cite{arous1988methods}. In this context, the domain of integration is path space, e.g. the space of solutions to a stochastic differential equation (SDE), and the small parameter $n^{-1}$ is the noise level of the SDE.

However, there is an intermediate and largely unexplored regime between the cases $d=\mathcal O(1)$ and $d=\infty$. Is there an expansion when $d$ grows large with $n$? Beyond the mathematical significance, resolving this question is of great importance in many modern applications, particularly within statistics, where computing a high-dimensional Laplace integral is required. 

Deriving and even properly defining a suitable notion of high-dimensional expansion goes hand in hand with a second goal: explicitly controlling the LE remainder. Indeed, since $\u$ and $g$ themselves depend on the growing $d$, these functions can contribute non-negligible factors to the remainder. Thus one must fully quantify how the remainder depends on all problem parameters $\u,g,d,n$. Obtaining such an explicit bound, which is at the same time tight, is challenging when $d\gg1$, and has never been done before. Here, we do precisely this: we give the first ever theory of the LE in the high-dimensional regime, including explicit and tight control of the remainder. The non-asymptotic bounds we derive allow us to characterize a broad, flexible class of $d,n,\u,g$ admitting a valid LE.

\subsection{Main results}\label{intro:results}
The existing, classical expansion~\eqref{d-dim-laplace}-\eqref{RemL} cannot formally be used as a starting point to derive a high-dimensional expansion. Indeed, there is no guarantee that the remainder $\Rem_L$ remains small when $d$ grows large with $n$, and therefore also no apriori guarantee that the terms of a high $d$ expansion should coincide with the terms $A_{2k}n^{-k}$ in~\eqref{d-dim-laplace}. 

Nevertheless, by ``starting from scratch" and using new techniques suitable to the high-dimensional regime, we arrive at a decomposition of the Laplace integral which \emph{does} formally coincide with~\eqref{d-dim-laplace}. But in place of the remainder bound~\eqref{RemL}, we obtain a new nearly explicit bound on $\Rem_L$ which takes the following form:
\beq\label{RemL-main-bound}
|\Rem_L|\les_L  \A_{2L}(\u,g)(d^2/n)^L.\eeq This holds under certain assumptions on the smoothness and growth at infinity of $\u$ and $g$; see Theorem~\ref{thm:expand} for the precise statement. In this bound, $\les_L$ indicates a suppressed non-explicit constant $C_L$ depending only on $L$, and $\A_{2L}(\u,g)$ is a fully explicit function of the derivatives of $\u$ and $g$ in a neighborhood of $\xmin$, the point of minimum. In the classical setting in which $\u$, $g$, and $d$ are fixed and $n\to\infty$, we recover from~\eqref{RemL-main-bound} the fact that $\Rem_L=\mathcal O(n^{-L})$. Thus \emph{our theory subsumes the classical theory of the LE}. 

More importantly, we significantly extend it. Informally, we have shown that the expansion~\eqref{d-dim-laplace} to order $L$ is useful for any $\u,g,d,n$ for which $\A_{2L}(\u,g)(d^2/n)^L$ is sufficiently small. This simple fact is powerful in practice, where decisions must be made non-asymptotically, based on a single given $\u,g,d,n$. The common knowledge that the LE is safe to apply as long as $n$ is large enough and $d$ is not too large is of course true. Yet frequently in modern applications, the setting is not sufficiently clear-cut that one can come to a decisive conclusion based on the size of $n$ and $d$ alone. Now, we have a complete characterization of when the LE is safe to apply, based on the magnitude of $\A_{2L}(\u,g)(d^2/n)^L$. 
We discuss the practical ramifications of this further in Section~\ref{sec:app}.
 
From a more formal mathematical perspective,~\eqref{RemL-main-bound} can be used to show that~\eqref{d-dim-laplace} constitutes a true, generalized asymptotic expansion~\cite[Chapter 1]{wongbook}, in the following sense. Consider any sequence of dimensions $d_n$ for which $d_n^2/n\to0$ as $n\to\infty$, and functions $\u_n,g_n:\R^{d_n}\to\R$, for which $\A_{2L}(\u_n,g_n)\leq C_L$ as $n\to\infty$, for each $L=1,2,3,\dots$. Then~\eqref{d-dim-laplace} is a generalized asymptotic expansion in powers of the small parameter $d_n^2/n$, meaning precisely that $|\Rem_L|\les_L (d_n^2/n)^L$ for each $L$.
More generally, if $\A_{2L}(\u_n,g_n)\les_L\tau_n^{2L}$ for some potentially large $\tau_n$, then~\eqref{RemL-main-bound} implies that we have an expansion in powers of $\tau_n^2 d_n^2/n$, provided this quantity goes to zero with $n$. For example if $\tau_n=\sqrt {d_n}$ and $d_n^3/n\to0$, we get an expansion in powers of the small parameter $d_n^3/n$. We could even have that $\tau_n\ll 1$ in special cases. From now on we typically omit the $n$ subscript to lighten the notation.

Importantly, we also formulate simple conditions on $\u,g$ which imply the necessary bounds on $\A_{2L}(\u,g)$. Here and below, let $\|\nabla^kf\|_H$ be shorthand for $\sup_{x\in\U}\|\nabla^kf(x)\|_H$, where $\U$ is a small neighborhood of $\xmin$ and $\|\cdot\|_{H}$ is an $H$-weighted operator norm, where $H=\nabla^2\u(\xmin)$. Then, in order to have $\A_{2L}(\u,g)\les_L1$, it suffices that
\beqs\label{simple:tau1}
\|\nabla^k\u\|_H&\les_k d^{\left\lceil\frac k2\right\rceil -2}\qquad \forall\;k=3,4,\dots,2L+2,\\
\|\nabla^kg\|_H&\les_k d^{\left\lceil\frac k2\right\rceil}\qquad \forall\;k=0,1,\dots,2L.
\eeqs
See Lemma~\ref{lma:powers} for analogous conditions which imply $\A_{2L}(\u,g)\les\tau^{2L}$ in the case of general $\tau$. 

To understand why $\A_{2L}(\u,g)$ can be bounded by a constant even as the derivative tensor norms of $g$ and $\u$ grow with $d$, we note that $\A_{2L}(\u,g)$ takes the following form, upon some simplification:
\beq\label{slack}\A_{2L}(\u,g)=a_{L,0}(\u,g)+a_{L,1}(\u,g)d^{-1} +\dots+ a_{L,2L}(\u,g)d^{-2L}.\eeq The quantities $a_{L,j}(\u,g)$ have no explicit $d$ dependence, and are given by polynomials of $\|\nabla^k\u\|_H$, $\|\nabla^kg\|_H$. For example, $a_{1,1}(\u,g)=\|\nabla g\|_H\|\nabla^3\u\|_H + \|\nabla^2g\|_H$. The structure~\eqref{slack} shows that $\A_{2L}(\u,g)$ has built-in slack, meaning  $a_{L,j}(\u,g)$ can be as large as $d^j$, $j=0,\dots, 2L$ in order for $\A_{2L}(\u,g)$ to be bounded by a constant. Translating the conditions $a_{L,j}(\u,g)\les d^j$, $j=0,\dots,2L$ into conditions on $\|\nabla^k\u\|_H$, $\|\nabla^kg\|_H$ gives~\eqref{simple:tau1}. In Section~\ref{sec:log} we give an example where this slack is used to its full capacity.



\paragraph{Tightness of the bound~\eqref{RemL-main-bound}.}We show that our bound on $\Rem_L$ is tight in the following sense. Fix any sequence $d_n$ such that $d_n^2/n\to0$. We exhibit a sequence of functions $\u_n,g_n:\R^{d_n}\to\R$ such that there are constants $c_L, C_L>0$ for which $|\Rem_L|\geq c_L(d_n^2/n)^L$ and $\A_{2L}(\u_n,g_n)\leq C_L$ as $n\to\infty$, for all $L=1,2,3,\dots$. Thus~\eqref{RemL-main-bound} gives the upper bound $|\Rem_L|\leq C_L'(d_n^2/n)^L$, and this upper bound matches our lower bound up to a constant depending only on $L$. The functions are very simple: $\u_n(x)=\|x\|^4/24+\|x\|^2/2$ and $g_n(x)=1$. 
 
\paragraph{Application to a random $\u$ arising in statistics.} In Section~\ref{sec:log}, we apply our results to analyze the expansion for a particular choice of $\u$ arising as the log likelihood in generalized linear models~\cite{agresti2015foundations}, a common type of statistical model. The function is random due to its dependence on randomly drawn data. We do not specify $g$ but leave it to be any well-behaved function, which unlike $\u$ is usually not random. See Section~\ref{sec:log} for more details. 

Beyond its practical relevance, the example is compelling because we prove that the function $\u$ uses up all the available slack. Namely, the upper bounds we prove on the derivative tensor norms match the maximum growth with $d$ allowed by~\eqref{simple:tau1}. Our bounds, which hold with high probability, imply that the $\A_{2L}(\u,g)$ are bounded by deterministic constants $C_L$ on this high probability event, for suitable functions $g$. On this same event,~\eqref{d-dim-laplace} is then a generalized asymptotic expansion in powers of $d^2/n$. See Section~\ref{sec:log} for precise statements.

\paragraph{Proof approach.} The main innovation in our proof approach is to express the remainder in terms of Gaussian expectations of certain tensor inner products, and then to leverage the powerful theory of high-dimensional Gaussian concentration to prove tight bounds on these expectations. 

As noted above, our expansion subsumes the classical theory, so in particular, our proof also applies to the case of fixed $d$. To our knowledge, our proof is new even in this fixed $d$ regime. A natural question is whether developing a new proof of this classical result is really necessary, since it may be possible to handle the case $d\gg1$ simply by studying the error estimates obtained in existing proofs in the fixed $d$ case. This is unlikely. Indeed, in classical derivations, the estimates are typically inexplicit or become unwieldy when $d\gg1$~\cite[Chapter 9]{wongbook}. 

Like us, recent works such as ~\cite{inglot2014simple,lapinski2019multivariate,kolokoltsov2020rates} (discussed below) which do obtain simple and explicit bounds on $\Rem_1$ develop new proof techniques rather than adapting classical derivations. However, these techniques lead to bounds which are too coarse in their dimension dependence. Thus our approach --- particularly the new representation of the remainder, enabling the use of Gaussian concentration ---  is instrumental to obtain \emph{tight} bounds in the high-dimensional context. 

Finally we mention that, while we have not quantified the suppressed constant $C_L$ in~\eqref{RemL-main-bound}, our proof technique is based on explicit calculations in which one can certainly keep track of the constants. We leave this to future work.

See Section~\ref{sec:outline} for more details on our proof approach.


\subsection{State of the art in high dimensions and theoretical significance}\label{intro:stateofart}
We now review recent literature on the LE in high dimensions and the related problem of obtaining explicit remainder bounds. We then explain the theoretical significance of our own work against this backdrop of the state of the art. 


The works~\cite{inglot2014simple,lapinski2019multivariate,kolokoltsov2020rates} have focused on obtaining simple and explicit error bounds on $\Rem_1$. Although the authors are not directly interested in the high-dimensional regime, the fact that the bounds are explicit allows one to apply them in the case $d\gg1$ as well. In~\cite{lapinski2019multivariate,kolokoltsov2020rates}, the bounds on $\Rem_1$ blow up exponentially in $d$, due to the presence of expressions of the form $\det(\nabla^2\u(\xmin))/\inf_{x\in\U}\lambda_{\min}(\nabla^2\u(x))^d$, using our notation. Here, $\U$ is a neighborhood of $\xmin$. The work~\cite{inglot2014simple} considers the case $g\equiv1$. The author's bound on $\Rem_1$ is dominated by $K_1/n$, which scales as $d^6/n$ if $\max_{i,j,k}|\partial^3_{ijk}\u(\xmin)|$ is a constant. 

The works~\cite{barber2016laplace,tang2021laplace} are both explicitly interested in the high $d$ case, both studying $\Rem_1$ for $g\equiv1$. The work~\cite{barber2016laplace} considers a statistical context in which $\u$ is a particular random function similar to the one studied in Section~\ref{sec:log}. The authors show that in this case, $|\Rem_1| \les\sqrt{(d\log n)^3/n}$ with high probability.~\cite{tang2021laplace} proves bounds on $\Rem_1$ for a broader class of functions $\u$. It is not possible to directly compare our assumptions with those of~\cite{tang2021laplace}. However, under conditions which imply $|\Rem_1|\les d^2/n$ in our work, the bound of~\cite{tang2021laplace} gives $|\Rem_1|\les d^{3}/n$; see Example~\ref{ex:L1} for more details.



There have also been a few non-rigorous works on the full high-dimensional LE; see~\cite{shun1995laplace,smith2011spatial,ogden2021error}. Of these,~\cite{shun1995laplace} has been particularly influential. There, Shun and McCullagh study the terms of the expansion~\eqref{d-dim-laplace} for $g\equiv1$, in the context of two statistical models. In the first, the ``crossed random effects model" mentioned in Section~\ref{sec:app} below, it holds $d=\mathcal O(n^{1/2})$. The authors argue that $A_2n^{-1}$ is then $\mathcal O(1)$ as $n\to\infty$~\cite[Section 4.2]{shun1995laplace}. The second case is a generalized linear model, the same as the one we study in Section~\ref{sec:log} and discussed above. Shun and McCullagh claim that in this setting, the term $A_2n^{-1}$ is not small if $d^3/n$ is not small, and therefore the standard LE is not valid~\cite[Section 6]{shun1995laplace}.  The impression that $d^3\ll n$ is the boundary of applicability of the LE has persisted until nearly the present day, with works such as~\cite{han2024enhanced} continuing to cite~\cite{shun1995laplace} for guidance on when the LE can be applied. The aforementioned rigorous works~\cite{barber2016laplace,tang2021laplace} showing $\Rem_1$ is small when $d^3\ll n$ only seem to support this impression. Given the influence of~\cite{shun1995laplace} in the community, we have chosen to discuss the authors' claims in more detail in Appendix~\ref{app:shun}, explaining exactly why their arguments leading to $d^3\ll n$ are not sharp.

Beyond the specifics of how large $n$ should be relative to $d$, the work~\cite{shun1995laplace} also set the tone about the feasibility of a general theory of the LE in high dimensions. The authors claim that different functions $\u$ may necessitate different groupings of terms in the standard expansion, and comment that ``it does not seem feasible at present to develop useful general theorems for approximating arbitrary high-dimensional integrals". Since then, only the case $L=1$ has been understood in any capacity in the high $d$ regime.

Thus our work goes much farther than the prior state of the art. We have shown that a general theory \emph{is} possible by rigorously deriving a high-dimensional LE, which \emph{is} in fact valid under quite general conditions. We have obtained nearly explicit bounds on $\Rem_L$ for arbitrary order $L$, and tightened the dimension dependence of the bound on $\Rem_1$ compared to the previous works. Moreover, our bounds for arbitrary $L$ are optimal, in the sense described in Section~\ref{intro:results}. Our work also highlights the important role played by the derivatives of $\u$ and $g$, in addition to $d$ and $n$ themselves. Indeed, different growth rates of the derivatives of $\u$ and $g$ give rise to different small parameters $(\tau d/\sqrt n)^2$, as discussed in Section~\ref{intro:results}. Interestingly and conveniently enough, these different asymptotic behaviors of the remainders all coexist: they describe the error after truncation of the same, original LE~\eqref{d-dim-laplace}, with \emph{no changes to the formulas for the terms $A_{2k}n^{-k}$}. This is convenient in practice because it means that no new formulas need to be implemented. 

This is also convenient in theory, since it means that separate analyses are not needed in different regimes. To determine the size of our bounds on $\Rem_L$ for a particular family of $\u,g$, one need only compute the operator norms of the derivative tensors $\nabla^k\u$, $\nabla^kg$. 
This way, our results have provided a clear template for how the asymptotic behavior of the LE in different settings can be determined.

To conclude, let us mention a related line of work. In Bayesian statistics, one is interested in approximating ratios of two Laplace integrals: $\int g(x)e^{-n\u(x)}dx/\int e^{-n\u(x)}dx$, where $g$ is not necessarily smooth near $\xmin$, so we cannot use a Laplace expansion of the numerator. This ratio can also be written as $\int gd\pi$, where $\pi$ is a probability distribution with density proportional to $e^{-n\u(x)}$. The approach taken to this problem is to approximate $\pi$ by a Gaussian $\hat\pi$ and to take $\int gd\pi\approx\int gd\hat\pi$. The Gaussian $\hat\pi$ stems from replacing $\u$ by its quadratic Taylor expansion about $\xmin$, which is exactly the principle behind the order $L=1$ Laplace expansion. There have been many works on bounding the accuracy of the approximation $\pi\approx\hat\pi$ in high dimensions; see e.g.~\cite{spok23,katskew, bp, fischer2022normal, dehaene2019deterministic, huggins2018practical, helin2022non}.

\subsection{Practical implications}\label{sec:app} 
Laplace-type integrals are ubiquitous in applications, particularly within statistics. One of the primary ways such integrals arise in statistics is through the following optimization problem:
\beq\label{MMLE}\max_{\theta}\int_{\R^{d}}g(x)e^{-n\u_\theta(x)}dx,\eeq where the maximization can be over either a discrete or continuous set of $\theta$'s, and $d$ may also vary with $\theta$. The interpretation of the integral varies depending on the statistical \emph{framework} used (Bayesian or frequentist). In the Bayesian framework, the problem~\eqref{MMLE} is solved for the purpose of Bayesian model selection, and the integral is known as the model evidence~\cite{wasserman2000bayesian},~\cite[Chapter 5]{ando2010bayesian}. In the frequentist framework,~\eqref{MMLE} constitutes a marginal maximum likelihood estimation problem, and the integral is a marginal likelihood~\cite{ruli2016improved}. The structure of the problem~\eqref{MMLE} also depends on the particular statistical \emph{model} being studied; this affects the domain of $\theta$, the form of $\u_\theta$ and $g$, the scaling of $n$ with $d$, and the possible dependence of $d$ on $\theta$. In some models, the objective is actually given by a sum of logarithms of several Laplace-type integrals, each of which depends on $\theta$~\cite[Equation (2)]{hall2020fast}. We consider all of these cases to be problems of the general type~\eqref{MMLE}.

These problems are particularly computationally intensive when the integration is high-dimensional, which occurs often in modern applications. For example,~\cite{immer2021scalable} studies~\eqref{MMLE} in the context of deep learning, where the integration is over the space of hundreds of parameters of a neural network. Thus efficient numerical techniques are essential, and as we explain shortly, the LE is a particularly well-adapted tool to solve~\eqref{MMLE}. 

The LE was introduced to the statistical community in the landmark paper~\cite{tierney1986accurate}, and it has been a popular approach to simplify the calculation of Laplace-type integrals in statistics ever since. The LE to finite order is typically called the ``Laplace approximation" in statistics, or LA for short, so we adopt this terminology here as well. For the purpose of accurately computing a \emph{single} Laplace-type integral, it may be advantageous to use the alternative, arguably more standard method of Markov Chain Monte Carlo (MCMC)~\cite{robert1999monte}, rather than the LA. But in the context of the optimization problem~\eqref{MMLE}, the LA enjoys significant advantages over MCMC.  The issue with MCMC is that its output is numerical. Thus MCMC only gives black box access to the integral for each $\theta$, whereas the LA outputs an analytic expression for the approximate integral as a function of $\theta$. This is extremely useful when solving the optimization problem in $\theta$.


But application of the LA to solve~\eqref{MMLE} has been hampered by the lack of a high-dimensional theory. To illustrate this, consider the Bayesian Information Criterion (BIC), an extremely popular and very simple method~\cite{BIC} based on the LA to solve a version of~\eqref{MMLE}. Writing about the limitations of the BIC, Drton and Plummer state, ``for models with a large number of predictors [large $d$] it is no longer clear that some version of the BIC, or perhaps rather a Laplace approximation, accurately approximates a marginal likelihood"~\cite[p. 377]{drton2017model}. In fact, the authors go on to cite the influential work~\cite{shun1995laplace} discussed above, saying that ``Shun and McCullagh (1995) have shown that the model dimension cannot be too large for a Laplace approximation to be useful". 

There has been limited progress in applying the LA in high dimensional statistical settings. For example, the aforementioned work~\cite{barber2016laplace} gives conditions under which the LA can be used to accurately solve~\eqref{MMLE}, in the same context as the one in which the BIC is applied. In particular, a key condition is that $d\ll n^{1/3}$. But the present work shows that for suitable $\u$ and $g$, the LA can be used for $d$ as large as $d\ll \sqrt n$. \emph{Thus the first major practical implication of our results is to extend the use of the LA farther out into the high-dimensional regime, where it is sorely needed. }

Our results are also useful in more moderate dimensions in which the LA is already in active use. Indeed, any applications using the LA in which precise error estimates are required can benefit from the bound~\eqref{RemL-main-bound}, particularly once the omitted constants are obtained. Furthermore, our bounds are especially useful close to the boundary of applicability of the LA, where the accuracy of the method is in question. We now describe one such compelling example. 

The example pertains to solving~\eqref{MMLE} in a class of statistical models called mixed and random effects models~\cite{searle2009variance}. A large number of works have proposed to solve~\eqref{MMLE} for these models by using the LA expanded out to different orders, or using variants of the LA. The foundational papers on this topic include~\cite{solomon1992nonlinear,breslow1993approximate,liu1993heterogeneity, breslow1995bias}. The idea has been further developed or studied e.g. in the works~\cite{raudenbush2000maximum, joe2008accuracy, rizopoulos2009fully, smith2011spatial, bianconcini2012estimation, kim2013logistic, li2020multilevel}. The performance of the LA has been studied primarily with numerical simulation or nonrigorous arguments, and reports on its accuracy are varied. For example, the performance is shown to be good in~\cite{joe2008accuracy} but poor relative to other methods in~\cite{hall2020fast}. 

This ambiguity is unsurprising, because there are a variety of statistical settings within mixed and random effects models (i.e. particular regimes of $\u_\theta,g,d,n$) in which the LA is applied to~\eqref{MMLE}. Most likely, the LA error is simply too large to be able to accurately solve the optimization problem in some of these settings. This can be established in each case with the help of our precise LA error bound $\A_{2L}(\u,g)(d^2/n)^L$. For instance, our theory already gives strong indication that the LA is likely too coarse in the setting of crossed random effects models, where typically $d=\mathcal O(\sqrt n)$~\cite{shun1995laplace,han2024enhanced}. Determining this definitively would also require studying $\A_{2L}(\u,g)$, by bounding the derivative tensor norms of $\u$ and $g$.

It is clear that in all but the simplest, low $d$ cases, \emph{the full force of our non-asymptotic error bounds is needed to rigorously quantify the performance of the LA to solve~\eqref{MMLE}}.

\subsection*{Organization.}In Section~\ref{sec:main} we state our assumptions and main results on the high-dimensional Laplace asymptotic expansion. In Section~\ref{sec:log}, we apply the theory to a function $\u$ stemming from a standard statistical model. Section~\ref{sec:proof} outlines the proof. The Appendix contains supplementary lemmas and the comparison with~\cite{shun1995laplace}.

\section{Main results}\label{sec:main}In this section we present our main results. We start in Section~\ref{subsec:assume} by stating our assumptions and introducing important quantities. In Section~\ref{subsec:expand} we present our high-dimensional expansion of $\int ge^{-n\u}$ in its most general form. Then in Section~\ref{sec:ae} we present assumptions on the growth of the derivatives of $g$ and $\u$ under which our high-dimensional expansion reduces to a true asymptotic expansion in powers of a small parameter. We also present an example with a quartic function $\u$ for which our remainder bounds are tight. 

\subsection*{Notation}\label{notation} 
We start with notation pertaining to tensors, and stress the importance of the below distinction between norms of vectors in $\R^d$ and norms of linear forms on $\R^d$. For a vector $x\in\R^d$, and a positive definite matrix $H\succ0$, we let
\beq\label{xH}
\|x\|_H = \sqrt{x^\top H x}.
\eeq
A tensor $T$ of order $k$ is an array $T=(T_{i_1i_2\dots i_k})_{i_1,\dots,i_k=1}^d$, and we also consider tensors to be $k$-variate linear forms on $\R^d$, with
$$T(u_1,\dots, u_k) = \la T, u_1\otimes\dots\otimes u_k\ra = \sum_{i_1,\dots,i_k=1}^dT_{i_1i_2\dots i_k}(u_1)_{i_1}(u_2)_{i_2}\dots (u_k)_{i_k}.$$ 
 For two order $k$ tensors $T$ and $S$ we let $\la T, S\ra$ be their entrywise inner product.  
The operator norm of $T$ with respect to the $\|\cdot\|_H$ norm on $\R^d$ is given by
\beq\label{TH}\|T\|_H=\sup_{\|u_1\|_H=\dots=\|u_k\|_H=1}\la T, u_1\otimes\dots\otimes u_k\ra.\eeq
We say $T$ is symmetric if $T_{i_1\dots i_k}= T_{j_1\dots j_k}$, for all permutations $j_1\dots j_k$ of $i_1\dots i_k$. By~\cite[Theorem 2.1]{symmtens}, for symmetric tensors we have $\|T\|_H=\sup_{\|u\|_H=1}\la T, u^{\otimes k}\ra.$ In particular, 
\beqs\label{T-form}
\|T\|_H &= \|H^{-1/2}T\|,\qquad k=1\\
\|T\|_H &= \|H^{-1/2}TH^{-1/2}\|,\qquad k=2,
\eeqs where in the first line $\|\cdot\|$ is the regular Euclidean norm on $\R^d$ and in the second, $\|\cdot\|$ is the regular matrix operator norm and $T$ is a symmetric $d\times d$ matrix. 

Finally, we note an important distinction between~\eqref{xH} and~\eqref{T-form} in the case that $k=1$. In~\eqref{T-form}, the ``tensor" or linear form $T$ can of course be represented as a vector $T\in\R^d$, but the meaning of $\|T\|_H$ in~\eqref{T-form} is different from the meaning in~\eqref{xH}. The only time we treat vectors as linear forms is when the vector is a gradient of a function. Thus $\|\nabla f\|_H$ always means $\|H^{-1/2}\nabla f\|$ rather than $\|H^{1/2}\nabla f\|$.

For $a,b>0$, we say that $a\les_L b$ if $a\leq C_Lb$ for some constant $C_L>0$ depending only on $L$. For $a\in\R$ and $b>0$, we say that $a\asymp_L b$ if $c_Lb \leq |a|\leq C_Lb$ for some constants $c_L, C_L>0$ depending only on $L$. The measure $\gamma$ denotes the standard Gaussian measure on $\R^d$: $\gamma(dx)=(2\pi)^{-d/2}e^{-\|x\|^2/2}dx$. The function $\ind_\U:\R^d\to\R$ is the indicator of the set $\U$, and 
$$\epsilon = d/\sqrt n.$$


\subsection{Assumptions and important quantities}\label{subsec:assume}
\begin{assumption}[Minimizer of $\u$]\label{assume:min}
The function $\u$ has a unique global minimizer at a point $\xmin $. Furthermore, $\u$ is twice differentiable in a neighborhood of $\xmin $ to be specified, and $H:=\nabla^2\u(\xmin )\succ0$.
\end{assumption} For a fixed radius $\Rad\geq1$ to be specified later, we let $\U_H(\Rad)=\{x\; :\|x-\xmin \|_H < \Rad\sqrt d\}$ and $n^{-1/2}\U_H(\Rad)=\{x/\sqrt n\; : \; x\in\U_H(\Rad)\}$. Roughly speaking, when $\log n\ll d$ we can take $\Rad$ to be an absolute constant, and when $d$ is small, $\Rad\sim\log n$. See Section~\ref{sec:ae} for further considerations about the choice of $\Rad$.

\begin{assumption}[Regularity of $g$ and $\u$]\label{assume:CL}
It holds 
$$g\in C^{2L}\left(n^{-1/2}\bar\U_H(\Rad)\right),\qquad \u\in C^{2L+2}\left(n^{-1/2}\bar\U_H(\Rad)\right).$$ \end{assumption} Here, $\bar\U_H(\Rad)$ denotes the closure of $\U_H(\Rad)$, i.e. the set $\{x\; :\|x-\xmin \|_H\leq\Rad\sqrt d\}$. This regularity condition means that the derivatives $\nabla^kg$, $k=0,\dots,2L$ and $\nabla^k\u$, $k=0,\dots,2L+2$ exist and are uniformly continuous in the open set $n^{-1/2}\U_H(\Rad)$, and can therefore be uniquely extended to continuous functions up to the boundary.

\begin{assumption}[Growth of $g$ and $\u$ at infinity]\label{assume:tail}
For all $\|x-\xmin\|_H\geq\Rad\sqrt{d/n}$, it holds 
\begin{align}
\u(x) - \u(\xmin ) &\geq \frac13\sqrt{d/n}\|x-\xmin \|_H ,\label{u-imply}\\
|g(x)| &\leq \exp\left(\frac{\sqrt{dn}}{12}\|x-\xmin \|_H\right),\label{g-imply}
\end{align}
\end{assumption}
\begin{remark}
The constants $1/3$ and $1/12$ are for convenience, but these specific values do not create any essential limitation on $g$ and $\u$. This is because as $n$ increases, both bounds become more lax. Also, suppose $g$ does not satisfy~\eqref{g-imply}, but there is some $C>1$ such that $|g(x)| \leq C\exp((\sqrt{dn}/12)\|x-\xmin \|_H)$ for all $\|x-\xmin\|_H\geq\Rad\sqrt{d/n}$. Then our main Theorem~\ref{thm:expand} below still goes through, with all error bounds multiplied by $C$. This is because $g$ enters the Laplace integral linearly. 

Note also that the condition~\eqref{u-imply} is trivially satisfied for $x$ close to $\xmin$. Indeed, a Taylor expansion of $\u$ around $\xmin $ gives that $\u(x)-\u(\xmin )\approx \|x-\xmin \|_H^2/2$ when $x$ is sufficiently close to $\xmin $. Therefore, there is some $C$ such that
\beqs\label{uminx}
\u(x)-\u(\xmin )\geq \frac13\|x-\xmin \|_H^2\geq \frac{R}{3}\sqrt{d/n}\|x-\xmin \|_H\geq\frac13\sqrt{d/n}\|x-\xmin\|_H\\
\eeqs
 for all $\Rad\sqrt{d/n}\leq \|x-\xmin \|_H\leq C$. As a result,~\eqref{u-imply} should be interpreted as a condition on the growth of $\u$ when $\|x-\xmin \|_H$ is larger than some constant. If $\u$ is convex then the situation is even simpler, since the infimum of $(\u(x)-\u(\xmin ))/\|x-\xmin \|_H$ over all $\|x-\xmin\|_H\geq\Rad\sqrt{d/n}$ is achieved on the boundary, i.e. for some $x$ such that $\|x-\xmin\|_H=\Rad\sqrt{d/n}$. Thus the fact that~\eqref{uminx} is satisfied for $\|x-\xmin\|_H=\Rad\sqrt{d/n}$ suffices to show that~\eqref{u-imply} holds for all $\|x-\xmin\|_H\geq\Rad\sqrt{d/n}$. 
 
 See Lemma~\ref{lma:cvx} for a more precise statement of the above argument.
\end{remark}

\begin{remark}[Relaxing Assumption~\ref{assume:tail}] We never make separate use of~\eqref{u-imply} and~\eqref{g-imply}, but only together, in the form
\beq\label{g-u-combo}|g(x)|\exp\l(-n[\u(x)-\u(\xmin )]\r)\leq \exp\left(-\frac14\sqrt{dn}\|x-\xmin \|_H\right)\eeq for all $\|x-\xmin \|_H\geq\Rad\sqrt{d/n}$. Thus it suffices to assume~\eqref{g-u-combo}. In fact, even~\eqref{g-u-combo} is not essential, because we only use it to show (see Lemma~\ref{aux:gamma}) that
\beqs
|\maus{\U^c}|:=(2\pi)^{-d/2}&\left|\int_{\|x\|\geq\Rad\sqrt d}f(x/\sqrt n)e^{-nv(x/\sqrt n)}dx\right|\\
&\leq (2\pi)^{-d/2}\int_{\|x\|\geq\Rad\sqrt d}\exp(-\sqrt d\|x\|/4)dx \leq de^{-\Rad d/16}.
\eeqs Here, $v(x)=\u(\xmin+H^{-1/2}x)-\u(\xmin)$, and $f(x)=g(\xmin +H^{-1/2}x)$. The term $\maus{\U^c}$ arises in our error bounds, and should be exponentially small. Any other condition on the growth of $\u,g$ at infinity which leads to $|\maus{\U^c}|\leq de^{-\Rad d/16}$ (or some similar exponentially small bound) would suffice. 
\end{remark}


\begin{defn}[Derivative bounds]\label{def:cvf}
When a continuous derivative of order $k$ exists, and for $r=0,r=\Rad$, let
\beqsn
c_{k}(r) &=\sup_{\|y\|_H\leq r\sqrt{d/n}}\;\|\nabla^k\u(\xmin+y)\|_H\qquad k=3,4,5,\dots,\\
c_{k,g}(r)& =\sup_{\|y\|_H\leq r\sqrt{d/n}}\; \|\nabla^kg(\xmin+y)\|_H\qquad k=0,1,2,\dots .
\eeqsn
When continuous derivatives of the necessary order exist, let
\beqs
\bar c_k(0)&=c_k(0),\qquad\bar c_{k}(\Rad)=c_{k}(0) + \epsilon c_{k+1}(\Rad),\qquad k=3,5,7,\dots,\\
\bar c_{k,g}(0)&=c_k(0),\qquad\bar c_{k,g}(\Rad)=c_{k,g}(0) + \epsilon c_{k+1,g}(\Rad),\qquad k=1,3,5,\dots .
\eeqs where $\epsilon=d/\sqrt n$.
\end{defn} 
\begin{defn}[Downweighted derivatives]\label{def:alph} For $r=0$ and $r=\Rad$, and provided continuous derivatives of necessary order exist, let
\begingroup
\addtolength{\jot}{1em}
\beqs
\alpha_{k}(r)&=d^{1-\left\lceil\frac k2\right\rceil}\times 
\begin{dcases}
\bar c_{k+2}(r), &k=1,3,5,\dots,\\
c_{k+2}(r), &k=2,4,6,\dots.
\end{dcases}\\
\alpha_{k,g}(r) &=d^{-\left\lceil\frac k2\right\rceil}\times \begin{dcases}
c_{k,g}(r), &k=0,2,4,\dots,\\
\bar c_{k,g}(r), &k=1,3,5,\dots.
\end{dcases}
\eeqs
\endgroup
\end{defn}
\begin{remark}[Role of $\alpha$'s, and ``getting rid of" $\bar c_k,\bar c_{k,g}$]\label{rk:bar}
Essentially, our bound on the $L$th remainder of the Laplace expansion is given by a polynomial of the $\alpha$'s multiplied by $(d^2/n)^L$. Thus if the $\alpha$'s are each bounded by a constant, then the $L$th remainder will be bounded by $(d^2/n)^L$. The $\alpha$'s being bounded by a constant translates to the $c$'s and $\bar c$'s being bounded by certain powers of $d$. It would be somewhat cumbersome to check conditions on $\bar c_k$ and $\bar c_{k,g}$, since they involve both the $k$th and $(k+1)$st derivative. But we avoid bounding $\bar c_k$ and $\bar c_{k,g}$ entirely. We show that it suffices to bound $c_k(0)$ for odd $k$ and $c_k(\Rad)$ for even $k$ to obtain the necessary bounds on $\bar c_k$ and $\bar c_{k,g}$. See Lemma~\ref{lma:powers} below.
\end{remark}
\begin{remark}Our error bounds also go through with an alternative definition of $\alpha_k$ and $\alpha_{k,g}$ for odd $k$, which is always less than or equal to the one given above and in some cases, strictly smaller. See Remark~\ref{rk:alpha} for this alternative definition. However, the alternative definition is even more cumbersome so we use the one above to simplify the presentation.
\end{remark}
For example, the first four $\alpha_k$'s are given as follows, omitting the argument $r$ for brevity:
\beqs\label{deltas-explicit}
\alpha_1 &= \bar c_3 \\
\alpha_2  &=c_4 ,\\
\alpha_3  &= d^{-1}\bar c_5 ,\\
\alpha_4  &= d^{-1}c_6.
\eeqs
The first several $\alpha_{k,g}$ are given as follows:
\beqs\label{deltas-explicit-f}
\alpha_{0,g}  &=  c_{0,g} ,\\
\alpha_{1,g}  &= d^{-1}\bar c_{1,g} ,\\
\alpha_{2,g}  &= d^{-1} c_{2,g} ,\\
\alpha_{3,g}  &= d^{-2}\bar c_{3,g} ,\\
\alpha_{4,g}  &= d^{-2}c_{4,g} .
\eeqs
\begin{defn}[Bell polynomials]
Let 
\beq\label{def:bell}
B_k(s_1,\dots, s_k) =k! \sum_{\substack{j_1+2j_2+\dots+kj_k=k\\ j_1,\dots,j_k\geq0}}\;\prod_{i=1}^k\frac{s_i^{j_i}}{(i!)^{j_i}(j_i)!}\eeq be the $k$th complete Bell polynomial~\cite[Chapter 12]{Andrews_1984}. For example, 
$$B_0=1,\qquad B_1(s_1)=s_1, \qquad B_2(s_1,s_2)=s_1^2+s_2.$$ 
\end{defn}
The following is a key quantity that will arise in our upper bounds on the terms of the expansion and remainder:
\begin{defn}[Upper bounds on quantities in the expansion]\label{def:calA} For $r=0$ and $r=\Rad$, let
$$\A_k(r) = \sum_{\ell=0}^k\alpha_{k-\ell,g}(r)B_{\ell}(\alpha_1(r),\dots,\alpha_\ell(r)),\qquad k=0,1,2,\dots.
$$Note that $\A_k(0)\leq\A_k(\Rad)$.\end{defn}
To give an example, 
\beqs\label{A2r}
\A_2 &= \alpha_{0,g}(\alpha_1^2+\alpha_2)+\alpha_{1,g}\alpha_1+\alpha_{2,g} = c_{0,g}(\bar c_3^2+c_4) + d^{-1}\bar c_{1,g}\bar c_3 + d^{-1}c_{2,g},
\eeqs where we have omitted the radius argument $r$ for brevity. 

\subsection{Laplace integral expansion}\label{subsec:expand}
The following is the main theorem of this work. Recall that $\epsilon=d/\sqrt n$.
\begin{theorem}\label{thm:expand}Suppose Assumptions~\ref{assume:min},~\ref{assume:CL}, and~\ref{assume:tail} hold for some $\Rad\geq 40$. Then
\beqs\label{eq:thm:expand}
\frac{e^{n\u(\xmin)}\sqrt{\det H}}{(2\pi/n)^{d/2}}\int_{\R^d} g(x)e^{-n\u(x)}dx =g(\xmin )+\sum_{k=1}^{L-1}A_{2k}n^{-k}+\Rem_{L},\eeqs where the $A_{2k}$ are given by the explicit formula in Theorem~\ref{lma:leading}. The remainder $\Rem_{L}$ can be decomposed as $\Rem_{L} =  \kappa_{L}+ \maus{L}+ \maus{\U^c}$, where
\begingroup
\addtolength{\jot}{1em}
\begin{align}
|\kappa_L| &\les_L e^{(\Rad^4c_3(\Rad)^2+c_4(\Rad))\epsilon^2}\A_{2L}(\Rad)\epsilon^{2L},\label{rem-kap}\\
|\maus{L} | &\les_L\max_{0\leq k\leq 2L-1}\A_k(0)\epsilon^k\;e^{-(\Rad-1)^2d/4},\label{rem-tauL}\\
|\maus{\U^c}| &\leq  de^{-\Rad  d/16}.\label{rem-tauU}
\end{align}  \endgroup
Finally, the $A_{2k}$ are bounded as
\beq\label{leading-bound}
n^{-k}|A_{2k}|\les_k \A_{2k}(0)\epsilon^{2k},\qquad\forall k=1,\dots,L-1,
\eeq and have the following representation as a Gaussian expectation:
\beqs\label{A-expect}
A_{2k}=\frac{1}{(2k)!}\sum_{\ell=0}^{2k}{2k\choose\ell}\E\bigg[&\la\nabla^{2k-\ell}f(0), Z^{\otimes 2k-\ell}\ra \\
&\times B_\ell\bigg(\frac{-\lla\nabla^{3}v(0), Z^{\otimes 3}\rra}{2\cdot 3}, \dots,\frac{-\lla\nabla^{\ell+2}v(0), Z^{\otimes \ell+2}\rra}{(\ell+1)(\ell+2)}\bigg)\bigg],\eeqs where $Z\sim\mathcal N(0, I_d)$, $v(x)=\u(\xmin+H^{-1/2}x)-\u(\xmin)$, and $f(x)=g(\xmin +H^{-1/2}x)$.\\
\end{theorem}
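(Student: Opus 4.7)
The plan is to reduce the integral to a standard Gaussian expectation via the change of variables $y = \sqrt n\, H^{1/2}(x-\xmin)$, which yields
\[\frac{e^{n\u(\xmin)}\sqrt{\det H}}{(2\pi/n)^{d/2}}\int_{\R^d} g(x)e^{-n\u(x)}dx = \E\Big[ f(Z/\sqrt n)\exp\Big(\tfrac12\|Z\|^2 - n v(Z/\sqrt n)\Big)\Big],\]
where $Z\sim\mathcal N(0,I_d)$ and the ball $\U_H(\Rad)$ corresponds to $\{\|y\|\leq\Rad\sqrt d\}$. Splitting the Gaussian integral into this ball and its complement immediately isolates $\tau_{\U^c}$, which is bounded directly from Assumption~\ref{assume:tail} via a standard Gaussian-tail estimate (Lemma~\ref{aux:gamma}).

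Inside the ball, I Taylor-expand both $v$ and $f$ around $0$, obtaining
\[\tfrac12\|y\|^2 - nv(y/\sqrt n) = -\sum_{k=3}^{2L+1}\frac{n^{1-k/2}}{k!}\la\nabla^k v(0),y^{\otimes k}\ra + R_v(y),\qquad f(y/\sqrt n) = \sum_{j=0}^{2L-1}\frac{n^{-j/2}}{j!}\la\nabla^j f(0),y^{\otimes j}\ra + R_f(y),\]
with $R_v$ controlled by $c_{2L+2}(\Rad)$. Setting $\epsilon_n:=1/\sqrt n$ and $a_m(y):=-\la\nabla^{m+2}v(0),y^{\otimes m+2}\ra/[(m+1)(m+2)]$, the polynomial part of the exponential is then expanded using the Bell-polynomial identity
\[\exp\Big(\sum_{m\geq1}\frac{\epsilon_n^m}{m!} a_m(y)\Big) = \sum_{\ell\geq0}\frac{\epsilon_n^\ell}{\ell!}B_\ell(a_1(y),\dots,a_\ell(y)).\]
Multiplying by the $f$-expansion, collecting by powers of $\epsilon_n$, and integrating against $\gamma$, all odd powers of $\epsilon_n$ vanish by symmetry of $Z$, and the coefficient of $\epsilon_n^{2k}=n^{-k}$ is exactly the formula~\eqref{A-expect}. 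This simultaneously establishes the expansion identity and the Gaussian-expectation representation.

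To bound $A_{2k}$, I use $|\la\nabla^k v(0),Z^{\otimes k}\ra|\leq c_k(0)\|Z\|^k$, $|\la\nabla^jf(0),Z^{\otimes j}\ra|\leq c_{j,g}(0)\|Z\|^j$, and $\E\|Z\|^{2m}\les_m d^m$. A monomial $\prod_i a_i(Z)^{j_i}$ of $B_\ell$ with $\sum i\,j_i=\ell$, paired with the $f$-factor $\la\nabla^{2k-\ell}f(0),Z^{\otimes 2k-\ell}\ra$, has total $Z$-degree $2k+2\sum j_i$, so its expectation is $\les d^{k+\sum j_i}$. The downweighting in Definition~\ref{def:alph} is calibrated precisely so that each $c_k(0)$ factor becomes $\alpha_{k-2}(0)$ times a residual $\sqrt d$-power that cancels the excess $d^{\sum j_i}$; summing over the monomials of $B_\ell$ reproduces $\A_{2k}(0)\epsilon^{2k}$ up to a constant depending only on $k$. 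For odd tensor orders, the sharper bound obtained via Cauchy--Schwarz and the vanishing of odd Gaussian moments accounts for the improved $\min$ with $\sqrt d\,c_k$ in $\alpha_{k-2}$, proved as an auxiliary lemma.

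Applying the same scheme to the $R_f$- and $R_v$-remainders and to the higher-order ($\epsilon_n^{\geq 2L}$) Bell terms produces $\kappa_L$. The factor $e^{(\Rad^4 c_3(\Rad)^2 + c_4(\Rad))\epsilon^2}$ arises because controlling $e^{-R_v}-1$ on the ball of radius $\Rad\sqrt d$ requires a uniform upper bound on $\exp(-P(y))$ there, which follows from a crude quadratic-in-$y$ estimate extracted from the $k=3,4$ terms of $P$; higher-order contributions are absorbed into $\bar c_k(\Rad)$, which is why this bar-derivative rather than $c_k$ appears in $\A_{2L}(\Rad)$ when $r>0$. Finally, $\tau_L$ is the discrepancy between integrating the polynomial expansion over $\R^d$ versus over the ball, handled by Gaussian tail estimates of the form $\E[\mathrm{poly}(Z)\,\ind_{\|Z\|\geq\Rad\sqrt d}]\les \max_k\A_k(0)\epsilon^k\cdot e^{-(\Rad-1)^2d/4}$. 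The main obstacle is the bookkeeping: simultaneously tracking Bell-polynomial combinatorics, the parity-dependent $\sqrt d$-weightings in $\alpha_k,\alpha_{k,g}$, and the interplay between polynomial remainders inside the ball and tail remainders outside, while keeping the bounds tight enough to produce exactly $\A_{2k}(r)\epsilon^{2k}$ without accumulating extra powers of $d$.
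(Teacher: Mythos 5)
Your high-level scheme is the same as the paper's: reduce to a standard Gaussian via the affine change of variable, split into a ball of radius $\Rad\sqrt d$ and its complement, expand the exponential $e^{-\rr}$ using the complete Bell polynomials, integrate against $\gamma$, and note that odd-degree Gaussian moments vanish so only even powers of $n^{-1/2}$ survive. Your derivation of the coefficient formula~\eqref{A-expect} by collecting powers of $\epsilon_n$ is correct and formally equivalent to the paper's. However, there are two real gaps and one structural inefficiency.

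First, the structural point. You Taylor-expand $v$ and $f$ directly in $\R^d$ and then have to organize a remainder from $R_v$, $R_f$, and the tail of the Bell series. The paper instead observes that the local piece is $I(1/\sqrt n)$ where $I(t)=\int_\U f(tx)\exp(-[v(tx)/t^2-\|x\|^2/2])\,\gamma(dx)$ is a single-variable function of $t$, and applies Taylor's theorem with Lagrange remainder to $I$ itself. This gives $\kappa_L = \frac{n^{-L}}{(2L)!}I^{(2L)}(s)$ for some intermediate $s$ in one stroke, with $I^{(2L)}(s)$ having the \emph{same} Bell-polynomial structure as the terms $A_{2k}$. Your route can presumably be made rigorous, but it needs extra bookkeeping to match up the three separate remainder streams.

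The substantive gaps are in the two key quantitative estimates, and they are precisely the two bounds the paper singles out as the crux of the high-dimensional argument. You bound $\E\|Z\|^{2m}\les_m d^m$ and claim the odd-tensor improvement ``follows from Cauchy--Schwarz and the vanishing of odd Gaussian moments.'' That does not work: Cauchy--Schwarz on $\E[\la T,Z^{\otimes k}\ra^q]$ only raises the exponent, and the vanishing of odd moments only tells you the \emph{mean} of $\la T,Z^{\otimes k}\ra$ is zero; it says nothing about its $L^q$ size for $q\geq2$. The paper's bound $\E[\la T,Z^{\otimes k}\ra^q]^{1/q}\les_{k,q}\|T\|\,\sqrt d^{\,k-1}$ for odd $k$ (Proposition~\ref{lma:pk-bds}) is obtained from the sub-Gaussian concentration of Lipschitz functions under the Gaussian log-Sobolev inequality, centering $p(Z)=\la T,Z^{\otimes k}\ra$ at its mean on a slightly enlarged ball and controlling $\sup\|\nabla p\|$ there. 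Moreover, $q=2$ does not suffice: the Bell-polynomial Hölder argument (Lemma~\ref{lma:Bell:Lp}) requires $L^{4\ell^2}$ norms, so a Poincaré-inequality variance bound would not close the argument either.

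Similarly, your bound on the exponential factor in $\kappa_L$ — ``a uniform upper bound on $\exp(-P(y))$ there, which follows from a crude quadratic-in-$y$ estimate extracted from the $k=3,4$ terms'' — yields the wrong rate. The supremum of $|\rr(t,x)|$ over $\|x\|\leq\Rad\sqrt d$ is $\sim c_3\Rad^3\,d^{3/2}/\sqrt n$ from the cubic term, so a pointwise bound gives $\exp(\mathcal O(d^{3/2}/\sqrt n))$, not $\exp(\mathcal O(d^2/n))$. This is exactly the gap that separates the prior state of the art ($d^3\ll n$) from this paper ($d^2\ll n$). The paper's Proposition~\ref{prop:exp} bounds $\|e^{-\rr}\ind_\U\|_2$ via the Herbst argument applied to $-\rr$ restricted to the ball: the exponential is bounded by $\exp(\text{mean}+\sup_\U\|\nabla_x\rr\|^2)$, and since $\nabla_x\rr$ is \emph{quadratic} in $x$ (not cubic), $\sup_\U\|\nabla_x\rr\|^2\sim(\Rad^2 c_3 d/\sqrt n)^2=\Rad^4c_3^2 d^2/n$. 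If your ``quadratic-in-$y$'' refers to $\nabla_x\rr$, you are gesturing at the right object but your stated mechanism (a uniform bound on $\exp(-P)$) does not realize it; the improvement comes from replacing ``$\sup\phi$'' with ``$\text{mean of }\phi$ plus $\sup\|\nabla\phi\|^2$,'' which is the log-Sobolev concentration inequality, not a pointwise estimate.

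Without these two concentration bounds, your bookkeeping would still be internally consistent, but the $\alpha_k$ downweighting in Definition~\ref{def:alph} would not be achievable: you would be forced to take $\alpha_{k-2}=c_k/\sqrt d^{\,k-4}$ for \emph{all} $k$, and the exponential prefactor would be $\exp(\mathcal O(d^{3/2}/\sqrt n))$, so the resulting small parameter would be $d^3/n$ rather than $d^2/n$.
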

\begin{remark}[On the coefficients of the expansion and the bound~\eqref{leading-bound}]\label{rk:coeff}
The terms $A_{2k}n^{-k}$ we obtain are the same as in the classical, fixed $d$ regime. Our representation~\eqref{A-expect} of the coefficients $A_{2k}$ seems to be new, but the explicit formula for the $A_{2k}$ in Theorem~\ref{lma:leading} has already been derived in~\cite{kirwin2010higher}. The bound~\eqref{leading-bound} on the terms of the expansion may be useful for rough approximations of the size of the remainder, since one can write $\Rem_{L}\approx A_{2L}n^{-L}$ and then use~\eqref{leading-bound} with $k=L$. It is easier to compute $\A_{2L}(0)$ then $\A_{2L}(\Rad)$. For the sake of illustration, in Appendix~\ref{app:shun} we give the formula for $A_2$ both as a Gaussian expectation and an explicit sum.
\end{remark}
\paragraph{Discussion.} To gain intuition about our bound on $\Rem_L$, let us omit the exponentially negligible terms $\maus{L},\maus{\U^c}$. Note that this is how~\eqref{RemL-main-bound} in the introduction was obtained, i.e. informally, we have $|\Rem_L|\les\A_{2L}(\u,g)\epsilon^{2L}$ where $\A_{2L}(\u,g):=\exp((\Rad^4c_3(\Rad)^2+c_4(\Rad))\epsilon^2)\A_{2L}(\Rad)$. Now suppose the expression in the exponent is bounded. When $\epsilon\ll1$, boundedness of the exponent translates into a mild assumption on $c_3(\Rad)$ and $c_4(\Rad)$, as well as on $\Rad$ itself. Then essentially Theorem~\ref{thm:expand} gives that $$|\Rem_L|\les\A_{2L}(\Rad)(d^2/n)^L.$$ To further explore the structure of this bound, let us consider the case $L=1$. Using~\eqref{A2r}, we can write $\A_2(\Rad)(d^2/n)$ as follows: 
\beqs\label{Rem1-example}
|\Rem_1|
\les\left\{c_{0,g}(\bar c_3^2+c_4)d^2 + (\bar c_{1,g}\bar c_3 + c_{2,g})d\right\}n^{-1},
\eeqs where we have omitted the radius $R$ argument. The structure of the bound on $\Rem_L$ for higher $L$ is similar. 

If we think of $d$ and $n$ as ``universal" quantities, and of the $c$'s (which derive from $g$ and $\u$) as problem-specific, then we can interpret~\eqref{Rem1-example} as saying that $|\Rem_1|\les d^2/n$ up to problem-specific coefficients. However, we caution against this interpretation because the $c$'s can and often do scale with $d$; for an example, see Section~\ref{sec:log}. Given the fact that the $c$'s do often scale with $d$, the structure~\eqref{Rem1-example} is convenient. Namely, we see that while the $c$'s appearing in front of the highest power of $d$ have the greatest contribution to the remainder, the $c$'s in front of lower powers of $d$ can \emph{afford to be larger without changing the overall magnitude of the bound}. For example, in order for the bound~\eqref{Rem1-example} on $\Rem_1$ to be of size $d^2/n$, it suffices that $c_{0,g},\bar c_3,c_4=\mathcal O(1)$ and $\bar c_{1,g},c_{2,g}=\mathcal O(d)$. (In fact, as noted in Remark~\ref{rk:bar}, we will show that it suffices to bound $c_3$ and $c_{1,g}$ instead of $\bar c_3$ and $\bar c_{1,g}$.) In the aforementioned example from Section~\ref{sec:log}, the derivatives of the function $\u$ ``take full advantage" of this room to grow with $d$. 

In order to determine a general formula for how large each of the $c$'s can be, we use the formulation of the bound in terms of the $\alpha$'s. Indeed, in the bound $|\Rem_L|\les\A_{2L}(\Rad)(d^2/n)^L$, the quantity $\A_{2L}$ (recall Definition~\ref{def:calA}) is a ``pure" $d$-independent polynomial of the $\alpha$'s. Thus if each $\alpha$ is bounded by a constant, then $\A_{2L}(\Rad)$ is bounded by a constant, and hence $\Rem_L$ is bounded by $(d^2/n)^L$. But the $\alpha$'s are given by the $c$'s multiplied by various negative powers of $d$. Thus the $c$'s must be no larger than their corresponding positive powers of $d$ in order for the $\alpha$'s to be bounded by a constant. 

In the next section, we formalize the above discussion to obtain precise conditions under which $|\Rem_L|\les_L (d^2/n)^L$. We can also allow the $c$'s to grow faster, at the cost of obtaining a larger small parameter $\tau^2d^2/n$.

\subsection{Asymptotic expansion under additional assumptions}\label{sec:ae}
We first make a particular choice of the radius $\Rad$:
\beq\label{Rad-simpler}
\Rad =20\l(\frac Ld\log\l(\frac{n}{d^2}\r)\vee 2\r).\eeq It is straightforward to check, using the original bounds~\eqref{rem-tauL},~\eqref{rem-tauU} on $\maus{L}$ and $\maus{\U^c}$, that we have the following further bound with this choice of $\Rad$:
\beq\label{prop:R1}
|\maus{L}|+|\maus{\U^c}|\les_L \l(1\vee\max_{0\leq k\leq 2L-1}\A_k(0)\epsilon^{k}\r)\epsilon^{4L}.
\eeq
Recall that $\Rad$ still appears in the bound~\eqref{rem-kap} on $\kappa_L$, via the exponential factor $\exp([\Rad^4c_3(\Rad)^2+c_4(\Rad)]\epsilon^2)$ and $\A_{2L}(\Rad)$. We show below that for $\Rad$ as in~\eqref{Rad-simpler}, the exponential factor can be bounded by a constant under mild conditions. The quantity $\A_{2L}(\Rad)$ is given in terms of $c_k(\Rad),c_{k,g}(\Rad)$, which are given by suprema of derivative operator norms in a neighborhood of $\xmin$ of radius $\Rad\sqrt{d/n}$. To show that the choice~\eqref{Rad-simpler} does not make $\A_{2L}(\Rad)$ too large, we check that $\Rad\sqrt{d/n}\ll 1$, which implies that we need to take the supremum only over a vanishingly small neighborhood of $\xmin$.  It is straightforward to check that this is indeed the case as long as $d/n\ll1$ and $n\gg1$. However, if the derivatives of $\u$ or $g$ grow extremely rapidly as $x$ moves away from $\xmin$, then a different choice of $\Rad$ may be warranted.\\

We now let $\tau>0$ be some possibly large parameter. The following lemma states conditions on the derivatives of $g$ and $\u$ which will allow us to show that~\eqref{eq:thm:expand} is an asymptotic expansion in powers of the small parameter $(\tau\epsilon)^2$.
\begin{lemma}\label{lma:powers}Let $\tau>0$ satisfy
\beq\label{tau-ep}
\tau\epsilon \leq \frac{d^2}{\log^2(n/d^2)}\wedge 1.
\eeq  
Suppose
\begin{align}
|c_{k,g}(0)|&\les_k d^{\frac {k+1}2}\tau^k\qquad\forall k=1,3,\dots,2L-1,\label{ckg-0}\\ 
 |c_{k}(0)|&\les_k d^{\frac {k+1}2 - 2}\tau^{k-2}\qquad\forall k=3,5,\dots,2L+1,\label{ck-0}\\
|c_{k,g}(\Rad)|&\les_{L}d^{\frac k2}\tau^k\qquad\forall k=0,2,\dots,2L,\label{ckg-ev-R}\\
 |c_{k}(\Rad)|&\les_{L}d^{\frac k2 -2 }\tau^{k-2}\qquad\forall k=4,6,\dots, 2L+2,\label{ck-ev-R} 
  \end{align} where $\Rad$ is as in~\eqref{Rad-simpler}. Then
  \begingroup 
\addtolength{\jot}{1em}
\beqs\label{Ak-rad}
&\A_{j}(0)\leq\A_{j}(\Rad)\les_{L}\tau^{j}\quad\forall 0\leq j\leq 2L,\\
&(\Rad^4c_3(\Rad)^2+c_4(\Rad))\epsilon^2\les_L 1.\eeqs
\endgroup
\end{lemma}
\begin{remark}
In~\eqref{ckg-ev-R} and~\eqref{ck-ev-R}, we have allowed the bounds on $c_{k,g}(\Rad)$ and $c_k(\Rad)$ to depend on $L$ since the radius $\Rad$ itself depends on $L$. 
\end{remark}
\begin{remark}
We actually need to bound $\bar c_k(\Rad)=c_k(0)+\epsilon c_{k+1}(\Rad)$ and $\bar c_{k,g}(\Rad)=c_{k,g}(0)+\epsilon c_{k+1,g}(\Rad)$ for odd $k$ in order to bound the $\A$'s. But it is straightforward to show that if $k$ is odd, then we can combine~\eqref{ck-0} with~\eqref{ck-ev-R} to obtain a bound on $\bar c_k(\Rad)$ of the exact same order as~\eqref{ck-0}. The same is true for $\bar c_{k,g}(\Rad)$.
\end{remark}
See Appendix~\ref{app:main} for the proof, which relies on the following key property: $$\A_k\tau^{-k} = \sum_{\ell=0}^{k}\alpha_{k-\ell,g}\tau^{-(k-\ell)}B_{\ell}(\alpha_1\tau^{-1},\dots,\alpha_\ell\tau^{-\ell}).$$ This structure shows that it suffices to bound $\alpha_k\tau^{-k}$ and $\alpha_{k,g}\tau^{-k}$ to ensure $\A_k\tau^{-k}$ remains bounded. Example~\ref{ex:quart} and Section~\ref{sec:log} below give examples of functions $\u$ for which the conditions from Lemma~\ref{lma:powers} are satisfied with $\tau=1$. \\

In preparation for the following corollary, recall that a generalized asymptotic expansion with respect to the small parameter $(\tau\epsilon)^2$ means the following~\cite[Chapter 1]{wongbook}: the remainder satisfies $\Rem_L=\mathcal O((\tau\epsilon)^{2L})$ as $\tau\epsilon\to0$ for each $L=1,2,3,\dots$.

\begin{corollary}[Asymptotic expansion in powers of $(\tau\epsilon)^2$]\label{prop:R1:corr}
Suppose Assumptions~\ref{assume:min},~\ref{assume:CL}, and~\ref{assume:tail} hold with $\Rad$ as in~\eqref{Rad-simpler}, and let $\tau$ satisfy~\eqref{tau-ep}. If~\eqref{ckg-0}-\eqref{ck-ev-R} hold, then
\begin{align}
e^{n\u(\xmin)}&\frac{\sqrt{\det (nH)}}{(2\pi)^{d/2}}\int_{\R^d} g(x)e^{-n\u(x)}dx =g(\xmin )+\sum_{k=1}^{L-1}A_{2k}n^{-k}+\Rem_{L},\label{expand1}\\
&|\Rem_{L}| \les_{L} \l(\tau\epsilon\r)^{2L}+\epsilon^{4L}.\label{expand3}
\end{align} Therefore, if $\epsilon\leq \tau$, then~\eqref{expand1}-\eqref{expand3} is a generalized asymptotic expansion in powers of $(\tau\epsilon)^2$.
\end{corollary}
The proof is immediate by combining Theorem~\ref{thm:expand},~\eqref{prop:R1}, and Lemma~\ref{lma:powers}. \\

The most classical kind of asymptotic expansion is of Poincaré type and in power series form~\cite[Chapter 1]{wongbook}. This means the remainders $\Rem_L$ scale as powers of the small parameter, as opposed to only having \emph{upper bounds} which scale in powers of the small parameter, as in a generalized expansion. Determining whether the expansion is indeed ``classical" in this sense requires studying the terms $A_{2k}n^{-k}$ to show they satisfy $A_{2k}n^{-k} \asymp (\tau\epsilon)^{2k}$. This may be possible to do on a case-by-case basis; in particular, we do so in Example~\ref{ex:quart} below.


\begin{example}[Case $\tau=1,L=1,g\equiv1$]\label{ex:L1} We consider this simple setting in order to make a comparison with the prior work~\cite{tang2021laplace}. To further simplify things, assume $H=I_d$ and $n \leq d^2e^d$, in which case~\eqref{Rad-simpler} gives $\Rad=40$. In addition to regularity and growth at infinity, the main assumptions of Corollary~\ref{prop:R1:corr} are that
\beqs\label{u34}
\|\nabla^3\u(\xmin)\|&\leq C, \\
\|\nabla^4\u(x)\| &\leq C\quad\forall \|x-\xmin\|\leq 40\sqrt{d/n}.
\eeqs
Under these conditions, we obtain that $|\Rem_1|\les d^2/n$. 

Meanwhile,~\cite{tang2021laplace} assumes some integrability of $\int e^{-n\u(x)}dx$ in the complement of a small ball around $\xmin$, which is analogous to our condition on growth of $\u$ at infinity. Conditions on the third and fourth derivatives in~\cite{tang2021laplace} are also given, which lead to different bounds depending on how large the derivative norms are. The closest analogue to~\eqref{u34} is to set $c_3=c_4=1$ in Assumptions 3 and 4 of~\cite{tang2021laplace}, which gives rise to the condition
\beq
\|\partial_{x_i}\nabla^2\u(x)\|\leq C,\quad \|\partial_{x_i}\partial_{x_j}\nabla^2\u(x)\|\leq C\eeq for all $i,j=1,\dots,d$ and for all $x$ in a ball around $\xmin$. This is very similar to ~\eqref{u34}. Finally, when $H=I_d$, the condition on $H$ from Assumption 2 holds with $c_\infty=0$. Under these conditions, the authors obtain in Theorem 3.1 that $|\Rem_1|\les d^3/n$.

Thus under very similar conditions to those in prior work, we improve the bound on $\Rem_1$ from $d^3/n$ to $d^2/n$. The next example shows that the improved bound is tight.\end{example}

\begin{example}[Quartic exponential]\label{ex:quart}In this example, we demonstrate functions $g,\u$ satisfying the conditions of Corollary~\ref{prop:R1:corr}, for which the obtained bound~\eqref{expand3} is tight. Namely, let
\beq\label{g-z-quart}g(x)=1,\qquad \u(x)=\frac12\|x\|^2+\frac{1}{24}\|x\|^4.\eeq The Laplace expansion of $\int ge^{-n\u}$ takes the following form (see Appendix~\ref{app:main}):
\begingroup
\addtolength{\jot}{0.5em}
\beqs\label{quart-decomp}
\int_{\R^d}e^{-n\l(\frac{\|x\|^2}{2}+\frac{\|x\|^4}{24}\r)}dx &= \l(\frac{2\pi}{n}\r)^{d/2}\l(1 + \sum_{k=1}^{L-1}A_{2k}n^{-k}+\Rem_L\r), \\
A_{2k}&=\l(\frac{-1}{6}\r)^k\frac{\Gamma(2k+d/2)}{k!\Gamma(d/2)}.\eeqs \endgroup Furthermore, it is straightforward to show from the above formula for the terms that they satisfy
\beq\label{Ad2k}A_{2k}n^{-k}\asymp_k (d^2/n)^k.\eeq
We now state the bounds implied by Corollary~\ref{prop:R1:corr}.
\begin{prop}\label{prop:quart}Suppose $d^2/n\leq 1/4$. Then the functions $g,\u$ in~\eqref{g-z-quart} satisfy all the conditions from Corollary~\ref{prop:R1:corr} with $\tau=1$, and hence the corollary implies
\beq\label{corrshows}|\Rem_L|\les_L (d^2/n)^L.\eeq Furthermore, fix any $L\geq1$ and suppose $d^2/n$ is smaller than some constant depending on $L$. Then in fact,
\beq\label{RemLdn}\Rem_L\asymp (d^2/n)^L.\eeq
\end{prop} 
See Appendix~\ref{app:main} for the short proof.
\begin{remark}~\eqref{Ad2k} and~\eqref{RemLdn} show that the expansion~\eqref{quart-decomp} is a classical asymptotic expansion in powers of $\epsilon^2=d^2/n$. The error after truncating to order $L-1$ is precisely of the order of the $L$th term.
\end{remark}
The above remark is itself an interesting fact about the expansion~\eqref{quart-decomp}, but our main goal in presenting Proposition~\ref{prop:quart} is to show that Corollary~\ref{prop:R1:corr} gives tight bounds. Specifically, we now know from~\eqref{RemLdn} that the true values of the remainder are constant multiples of powers of $d^2/n$. We conclude that \emph{the bound~\eqref{corrshows} supplied by Corollary~\ref{prop:R1:corr} is tight}.

Interestingly, showing $\Rem_L\asymp (d^2/n)^L$ hinges on the bound~\eqref{corrshows}. Indeed,~\eqref{corrshows} and~\eqref{Ad2k} give $|A_{2L}n^{-L}|\gtrsim_L (d^2/n)^L$ and $|\Rem_{L+1}|\les_{L}(d^2/n)^{L+1}$, so that $|\Rem_L|\gtrsim_L (d^2/n)^L$ if $d^2/n$ is small enough. Thus we used the bound in Corollary~\ref{prop:R1:corr} to help prove that this very bound is tight.



\end{example}

\section{An example from statistics}\label{sec:log}
In this section, we study the high-dimensional Laplace expansion of $\int ge^{-n\u}$ for a function $\u$ which is both random and depends itself on $n$. The setting we study here arises e.g. in Bayesian inference in statistics. In this context, one is interested in computing summary statistics~\cite[Chapter 2.3]{bda3} such as the mean and variance of the \emph{posterior probability density}, which takes the form $\pi(x)\propto e^{-n\u(x)}$~\cite{schillings2020convergence}. For example, the mean of the posterior is given by
$$\int_{\R^d}x\pi(x)dx = \frac{\int_{\R^d}xe^{-n\u(x)}dx}{\int_{\R^d}e^{-n\u(x)}dx}.$$ Thus computing such summary statistics reduces to computing a ratio $\int ge^{-n\u}/\int e^{-n\u}$ of two Laplace-type integrals, where $g$ is determined by the statistic of interest. Meanwhile, $\u$ is determined by the statistical model and the observed data, and this function is random due its dependence on the data. The normalizing constant $\int_{\R^d}e^{-n\u(x)}dx$ is also itself of interest, in Bayesian model selection~\cite[Chapter 5]{ando2010bayesian}. 

In the below analysis, we consider a particular widely-used statistical model, which thereby determines the function $\u$, and verify that this $\u$ satisfies our assumptions. We leave $g$ unspecified, since it depends on the summary statistic of interest. In any case, $g$ is typically a simple function, such as a polynomial, for which verifying the assumptions is trivial. 

The purpose of this example is threefold. First, it allows us to demonstrate how our assumptions can be checked in a nontrivial example. Second, the example is directly relevant to Bayesian computation. Third, the example demonstrates that derivative tensor norms can grow with $d$ and more importantly, that this growth with $d$ does \emph{not} affect the definition of the small parameter, thanks to the slack in our bound on $\Rem_L$ (recall the discussion below Remark~\ref{rk:coeff}). We will see that nearly all of the slack gets used up, i.e. our bounds on the derivative tensor norms of $\u$ are nearly at the limit of what is allowed by~\eqref{ck-0} and~\eqref{ck-ev-R}.



The analysis in this section follows along similar lines as in our previous works~\cite{katsBVM,katskew}.
\subsection{Definition of $\u$}

To define $\u$, we first introduce a function $\phi:\R\to\R$ such that 
\beq\label{letphi}
\phi\;\text{strictly convex}, \quad \phi\in C^\infty(\R),\quad\|\phi^{(k)}\|_\infty=\sup_{t\in\R}|\phi^{(k)}(t)|<\infty\quad\forall k=3,4,5,\dots.\eeq Next, let $X_i\iid\mathcal N(0, I_d)$, $i=1,\dots,n$, and define the function $\psi:\R^d\to\R$ according to
$$\psi(x)=\frac1n\sum_{i=1}^n\phi(X_i^\T x).$$ It is straightforward to show that $\psi$ is strictly convex whenever the $X_i$ span $\R^d$, and this occurs with probability 1. 
Finally, for a point $\xmin \in\R^d$ such that $\|\xmin \|=1$, let
\beqs\label{udef-log}
\u(x) &= \psi(x) - \nabla\psi(\xmin )^\T x\\
&= \frac1n\sum_{i=1}^n\phi(X_i^\T x) - \frac1n\sum_{i=1}^n\phi'(X_i^\T \xmin )X_i^\T x.
\eeqs We assume $\|\xmin \|=1$ for simplicity, but any $\xmin $ whose norm is bounded independently of $n$ and $d$ would suffice for our purposes. 
Functions $\u(x)$ of the form~\eqref{udef-log} arise in the context of generalized linear models (GLMs) in statistics~\cite{agresti2015foundations}. In particular, if $\phi(t)=\log(1+e^t)$, then $\u(x)$ is the negative population log likelihood for the unknown coefficient vector $x$ in a logistic regression model. 

 \subsection{Laplace expansion}\label{subsec:lap:log}
Throughout this section we treat $\phi$ as fixed and do not indicate the dependence of various quantities on $\phi$. As discussed above, we will not choose a particular function $g$, so the main work in this section is to check the assumptions of Corollary~\ref{prop:R1:corr} for the function $\u$ from~\eqref{udef-log}. We note that it is meaningless to check the assumptions on $\u$ for a particular draw of the normally distributed data $X_1,\dots, X_n$, since the support of the Gaussian is all of $\R^d$. Thus the data can be any set of $n$ points in $\R^d$, and so the derivative tensor norms of $\u$ can be arbitrarily large. Instead, we will make statements that hold with high probability over possible draws of the data.

First, note that $\u$ is strictly convex with probability 1 because so is $\psi$, as discussed above. Moreover, it is clear that $\u$ has a unique strict global minimizer $x=\xmin $ (this is true for any $\u$ of the form $\u(x)= \psi(x) - \nabla\psi(\xmin )^\T x$, for a strictly convex $\psi$). Therefore, $\u$ satisfies Assumption~\ref{assume:min}. Next, since $\phi\in C^{\infty}(\R)$, we have $\u\in C^\infty(\R^d)$, so the function $\u$ satisfies the regularity Assumption~\ref{assume:CL} regardless of how the radius $\Rad$ is chosen. Next, we study the Hessian at $\xmin$. We have 
\beq\label{H-log}H=\nabla^2\u(\xmin ) = \frac1n\sum_{i=1}^n\phi''(X_i^\T \xmin )X_iX_i^\T.\eeq We have the following lower bound on $H$.
\begin{lemma}\label{lma:Hlb} Suppose $d/n<1/2$. Then there are absolute constants $C,\lambda>0$ such that the event
\beq
E_H=\l\{\frac1n\sum_{i=1}^n\phi''(X_i^\T \xmin )X_iX_i^\T \succ \lambda I_d\r\}\eeq has probability at least $1-4e^{-Cn}$.
\end{lemma}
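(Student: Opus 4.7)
The strategy is truncate-and-concentrate. Fix an auxiliary radius $R>0$ (to be chosen as an absolute constant depending only on $\phi$) and define
\beqsn
\tilde M_i := \phi''(X_i^\T\xmin)\,\ind(|X_i^\T\xmin|\le R)\,X_iX_i^\T,\qquad \tilde H_R := \frac{1}{n}\sum_{i=1}^n\tilde M_i.
\eeqsn
Since $\phi''\ge 0$ (convexity), every $\tilde M_i$ is PSD, so $H\succeq\tilde H_R$ pointwise. It therefore suffices to exhibit absolute constants $\rho,C>0$ for which $\lambda_{\min}(\tilde H_R)\ge\rho/2$ with probability $\ge 1-4e^{-Cn}$, since we can then set $\lambda:=\rho/2$.

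The first task is to compute $\E\tilde H_R$. By rotational invariance of the standard Gaussian we may assume $\xmin=e_1$; writing $X_i=(X_{i,1},X_{i,2:d})$ with independent coordinates makes $\E\tilde M_i$ diagonal, with $(1,1)$-entry $a_R := \E[\phi''(X_{i,1})X_{i,1}^2\,\ind(|X_{i,1}|\le R)]$ and the remaining $d-1$ diagonal entries all equal to $b_R := \E[\phi''(X_{i,1})\,\ind(|X_{i,1}|\le R)]$. Strict convexity of $\phi$ forces $\phi''>0$ on a set of positive Lebesgue measure, so by monotone convergence $a_R\uparrow a>0$ and $b_R\uparrow b>0$ as $R\to\infty$. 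Fixing $R$ absolute and large enough therefore yields $\E\tilde H_R \succeq \rho I_d$ for some absolute $\rho>0$.

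The core step is the concentration estimate $\|\tilde H_R - \E\tilde H_R\|_{\op}\le\rho/2$ with probability $\ge 1-4e^{-Cn}$, given $d/n<1/2$. The centered summands $\tilde M_i-\E\tilde M_i$ are i.i.d., mean zero, self-adjoint. Because the weight $\phi''(X_i^\T\xmin)\,\ind(|X_i^\T\xmin|\le R)$ is bounded by $\kappa_R := \max_{|t|\le R}\phi''(t)<\infty$ (finite by continuity of $\phi''$), for every unit $v$ the scalar $|v^\T\tilde M_i v|\le\kappa_R(X_i^\T v)^2$ is sub-exponential with absolute-constant norm (since $X_i^\T v\sim\mathcal N(0,1)$). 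An intrinsic-dimension matrix Bernstein inequality for sub-exponential self-adjoint summands then yields $\|\tilde H_R-\E\tilde H_R\|_{\op}>\rho/2$ with probability at most $2d\,e^{-cn}$ for some absolute $c>0$; combined with $d\le n/2$, this is at most $4e^{-Cn}$ after adjusting constants. Chaining with the previous two paragraphs gives $H\succeq\tilde H_R\succeq(\rho/2)I_d=:\lambda I_d$ on the desired event. The one delicate point is the concentration step: an alternative route via a $\tfrac14$-net of $S^{d-1}$ with $|\mathcal N|\le 9^d$ combined with scalar Bernstein applied to $v^\T(\tilde M_i-\E\tilde M_i)v$ also works, but requires $R$ (and hence $c$) to be tuned so the Bernstein exponent beats $\log 9$; matrix Bernstein with intrinsic dimension $2d$ sidesteps this tuning entirely and is the cleanest way to close the argument under the stated hypothesis $d/n<1/2$.
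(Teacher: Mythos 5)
The paper does not prove this lemma itself; it simply cites \cite[Lemma 4, Section 4]{surMLE1}, so any from-scratch proof is necessarily a different route. Your truncation step and the computation of $\E\tilde H_R$ via rotational invariance (yielding $\diag(a_R,b_R,\dots,b_R)\succeq\rho I_d$) are both correct.

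However, there is a genuine gap in the concentration step, and the alternative you name (matrix Bernstein with intrinsic dimension) does not sidestep it. With $d/n$ only bounded by $1/2$ --- not $d/n\to 0$ --- the variance proxy $\sigma^2 = \|\sum_i\E A_i^2\|\asymp \kappa_R^2\,d/n$ for the rescaled summands $A_i=\tfrac1n(\tilde M_i-\E\tilde M_i)$ is a bounded-away-from-zero constant, so the Bernstein exponent $\min\{t^2/\sigma^2,\,t/L\}$ evaluated at the fixed threshold $t=\rho/2$ is $O(1)$ rather than $\Theta(n)$; multiplying by the prefactor (whether $2d$, or the intrinsic dimension, which here is $\asymp d$ since $\E(\tilde M_1-\E\tilde M_1)^2 \asymp \kappa_R^2 d\, I_d$) gives something of order $d$, not $e^{-Cn}$. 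A quick sanity check: take $\phi''\equiv 1$, so $\tilde H_R$ is the unweighted empirical covariance; matrix Bernstein with $\sigma^2=(d+1)/n$ gives $\P(\|\tfrac1n\sum X_iX_i^\T-I\|\ge\tfrac12)\le 2d\,e^{-1/(8\sigma^2)}$, which is $O(d)$ when $d/n\asymp\tfrac12$, whereas the Davidson--Szarek bound $\lambda_{\min}\ge(1-\sqrt{d/n}-\epsilon)^2$ with probability $1-e^{-\epsilon^2 n/2}$ is what actually closes the argument in this regime. For the weighted Gram matrix, the right idea (and the one used in \cite{surMLE1}) is to condition on the scalar weights $\phi''(X_i^\T\xmin)$ and exploit that the components of $X_i$ orthogonal to $\xmin$ remain Gaussian and independent of the weights, then apply an exact Gaussian singular-value bound (Davidson--Szarek / Gordon's comparison) or a small-ball argument of Mendelson--Koltchinskii type. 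Matrix Bernstein, in any variant, is simply the wrong tool once $d/n$ is a constant bounded away from zero.
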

This follows immediately from~\cite[Lemma 4, Section 4]{surMLE1}. The constant $\lambda$ depends on the smallest value of $\phi''$ in some bounded interval whose width depends on $\|\xmin \|=1$. This is where the boundedness of $\xmin $ is used. That $\|\xmin\|=1$ and $\phi$ is fixed allows us to treat $\lambda$ as an absolute constant, so we omit the dependence of further constants on $\lambda$. 

It remains to bound the $c_k$'s and check Assumption~\ref{assume:tail}. We start with the bound on the $c_k$'s, since we then use this bound for $k=3$ to help check Assumption~\ref{assume:tail}. Now, thanks to Lemma~\ref{lma:Hlb}, it suffices to bound the \emph{unweighted} operator norms, denoted $\|\nabla^k\u\|$. This will translate into bounds on $\|\nabla^k\u\|_H$, i.e. on the $c_k$'s. By direct calculation, we obtain the following expression for the unweighted operator norms:
\beq\label{uderivlog}
\|\nabla^k\u(x)\|= \sup_{\|u\|=1}\frac1n\left|\sum_{i=1}^n\phi^{(k)}(X_i^\T x)(X_i^\T u)^k\right|.
\eeq
The following lemma bounds~\eqref{uderivlog} with high probability.
\begin{lemma}\label{lma:adam}
Suppose $d\leq n$. Then the event
$$E_k=\l\{\sup_{x\in\R^d}\|\nabla^k\u(x)\|\leq C_k\l(1+\frac{d^{k/2}}{n}\r)\r\}$$ holds with probability at least $1-4\exp(-C_k'\sqrt n)$, where $C_k,C_k'$ are constants depending only on $k$.
\end{lemma}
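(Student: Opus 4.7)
The plan is to prove Lemma~\ref{lma:adam} in two stages: a reduction that removes the $x$-dependence, followed by a high-probability bound on the resulting purely Gaussian quantity.

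For the reduction, differentiating $\u$ from \eqref{udef-log} gives $\nabla^k\u(x) = \frac{1}{n}\sum_{i=1}^n \phi^{(k)}(X_i^\T x)\,X_i^{\otimes k}$ for $k\geq 2$. Using the symmetric-tensor operator-norm identity \eqref{TH} together with $|\phi^{(k)}(\cdot)|\leq \|\phi^{(k)}\|_\infty < \infty$ (from~\eqref{letphi}, valid for $k\geq 3$), I would obtain
\[
\sup_{x\in\R^d}\|\nabla^k\u(x)\| \;\leq\; \|\phi^{(k)}\|_\infty \cdot M_k,\qquad M_k := \sup_{\|u\|=1}\frac{1}{n}\sum_{i=1}^n|X_i^\T u|^k.
\]
The remaining task is to show $M_k \leq \widetilde C_k(1+ d^{k/2}/n)$ with probability at least $1 - 4\exp(-C_k'\sqrt n)$, a statement depending only on the design $X_1,\dots,X_n\iid\mathcal N(0,I_d)$.

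For this tail bound I would combine pointwise sub-Weibull concentration with a covering argument on the sphere. For fixed $u\in S^{d-1}$ the variables $|X_i^\T u|^k$ are i.i.d.\ sub-Weibull with Orlicz parameter $2/k$ and mean $\E|Z|^k$, $Z\sim\mathcal N(0,1)$, so a Bernstein-type inequality yields
\[
\Pr\!\Bigl(\tfrac{1}{n}\sum_i|X_i^\T u|^k > C\Bigr)\leq 2\exp\bigl(-c_k\min(n,n^{2/k})\bigr)\leq 2\exp(-c_k'\sqrt n)
\]
for a suitably large constant $C$. I would lift this pointwise bound to a uniform one using an $\varepsilon$-net of $S^{d-1}$ of size $(3/\varepsilon)^d$, combined with the Lipschitz estimate $||a|^k-|b|^k|\leq k(|a|\vee|b|)^{k-1}|a-b|$; and I would separately capture the ``spike'' summand $d^{k/2}/n$ via the decomposition
\[
|X_i^\T u|^k \leq R^{k-2}(X_i^\T u)^2 + \|X_i\|^k\,\ind_{\|X_i\|>R}
\]
together with the choice $R^2\asymp d+\sqrt n$, for which $\max_i\|X_i\|\leq R$ holds with probability $1-2e^{-c\sqrt n}$ by standard chi-squared tail bounds. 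On that event the indicator terms vanish and the quadratic part is controlled by the Bai--Yin/Wishart operator-norm bound $\|\tfrac1n\sum X_i X_i^\T\|_\op\leq(1+\sqrt{d/n})^2\leq 4$.

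The hard part will be balancing these ingredients so the target $\widetilde C_k(1+d^{k/2}/n)$ is matched in both regimes: (i) $d^{k/2}/n\ll 1$, where the spike is negligible and one needs careful empirical-process control of the bulk fluctuation around $\E|Z|^k$ over the full sphere; and (ii) $d^{k/2}/n\gtrsim 1$, where the single largest $\|X_j\|^k/n\asymp d^{k/2}/n$ dominates and a coarser argument suffices. The slow sub-Weibull decay $\exp(-cn^{2/k})$ for $k\geq 3$ is exactly what forces the weaker $\sqrt n$ rather than linear-in-$n$ exponent in the final probability bound claimed by the lemma.
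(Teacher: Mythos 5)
Your reduction to the design-only quantity $M_k=\sup_{\|u\|=1}\frac1n\sum_i|X_i^\T u|^k$ via $\|\nabla^k\u(x)\|\leq\|\phi^{(k)}\|_\infty M_k$ is exactly what the paper does. But from that point the two arguments diverge sharply: the paper does not attempt a from-scratch proof of the bound on $M_k$; after a small dimension-padding trick (embedding $X_i$ in $\R^m$ with $m=\max(d,\sqrt{\log n})$ so that $m\leq n\leq e^{\sqrt m}$), it invokes Proposition 4.4 of Adamczak et al.\ (2010), which already delivers $\sup_{u}\frac1n\sum|\bar X_i^\T u|^k\leq C+C_k\log^{k-1}(2n/m)\sqrt{m/n}+C_k m^{k/2}/n+C_k m/n$ with failure probability governed by $\exp(-\sqrt{nm}/\log(2n/m))$, which is what produces the $\exp(-C_k'\sqrt n)$ in the lemma.

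Your proposed elementary replacement does not assemble into a proof. The single-scale spike decomposition with $R^2\asymp d+\sqrt n$ and the Bai--Yin bound yields $M_k\leq 4R^{k-2}\asymp (d+\sqrt n)^{(k-2)/2}$ on the good event, and this overshoots the target $1+d^{k/2}/n$ essentially everywhere: when $d\lesssim\sqrt n$ it gives $n^{(k-2)/4}$ against a target of order $1$ (already wrong for $k=3$), and when $\sqrt n\lesssim d\leq n$ it gives $d^{(k-2)/2}$, which exceeds $d^{k/2}/n$ by a factor $n/d\geq 1$. Your pointwise sub-Weibull tail $\exp(-c_k\min(n,n^{2/k}))$ also does not dominate $\exp(-c\sqrt n)$ once $k\geq 5$ (since $n^{2/k}<\sqrt n$), so even at a single point you cannot reach the lemma's probability bound by that route, and the subsequent union over an $\varepsilon$-net of size $e^{cd}$ would additionally force $d\lesssim n^{2/k}$, far stronger than the assumed $d\leq n$. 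In short, the two-regime split you describe is the right intuition, but closing it requires a genuinely multiscale truncation plus sharper empirical-process control (Talagrand-type inequalities and chaining) of exactly the kind packaged in the Adamczak et al.\ result; the paper's choice to cite it rather than reprove it is not a shortcut you can easily undo.
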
See Appendix~\ref{app:log} for the proof, which almost immediately follows from~\cite[Proposition 4.4]{adamczak2010quantitative}. We can now combine the above two lemmas, and use the inequality $\|\nabla^k\u\|_H\leq\lambda^{-k/2}\|\nabla^k\u\|$. This immediately yields the following corollary.
\begin{corollary}\label{corr:c:log}
The event $E:=E_H\cap E_3\cap E_4\cap \dots \cap E_{2L+2}$ has probability at least $1-C_Le^{-C_L'\sqrt n}$ for some constants $C_L,C_L'>0$ depending only on $L$. Furthermore, for some constants $C_k$, the following bounds hold on the event $E$:
\beqs\label{ck-log}
\sup_{r\geq0}c_k(r)&\leq C_{k}\bigg(1+\frac{{\sqrt d}^k}{n}\bigg),\quad\forall \, 3\leq k\leq 2L+2.
\eeqs 
\end{corollary}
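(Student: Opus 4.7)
The plan is to derive both conclusions directly from the two preceding lemmas, via a union bound for the probability estimate and an elementary weighted-to-unweighted operator norm comparison for the derivative bounds. There is nothing probabilistically deep left to do here; the work has been packaged into Lemmas~\ref{lma:Hlb} and~\ref{lma:adam}.

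For the probability, I would union-bound over the $2L$ events defining $E$. By Lemma~\ref{lma:Hlb}, $\PP(E_H^c) \leq 4e^{-Cn}$, and by Lemma~\ref{lma:adam}, $\PP(E_k^c) \leq 4\exp(-C_k'\sqrt n)$ for each $3 \leq k \leq 2L+2$. Summing these $2L+1$ terms and taking the slowest decay rate gives $\PP(E^c) \leq C_L e^{-C_L'\sqrt n}$, with the $e^{-Cn}$ contribution from $E_H^c$ absorbed since $n \geq \sqrt n$ for $n \geq 1$.

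For the derivative bounds, the key step is a norm comparison on $E_H$. Since $H \succ \lambda I_d$ there, one has $\|H^{-1/2}\|_{\op} \leq \lambda^{-1/2}$. Substituting $u = H^{-1/2}w$ in the definition~\eqref{TH} of the $H$-weighted operator norm yields, for any symmetric tensor $T$ of order $k$,
\[
\|T\|_H = \sup_{\|w\|=1}|\la T, (H^{-1/2}w)^{\otimes k}\ra| \leq \lambda^{-k/2}\|T\|.
\]
Applying this pointwise with $T = \nabla^k \u(x)$, and noting that $\sup_{r \geq 0} c_k(r) = \sup_{x \in \R^d}\|\nabla^k \u(x)\|_H$ (since sending $r \to \infty$ in Definition~\ref{def:cvf} makes the constraint $\|x - \xmin\|_H \leq r\sqrt{d/n}$ vacuous), one obtains
\[
\sup_{r \geq 0} c_k(r) \leq \lambda^{-k/2} \sup_{x \in \R^d}\|\nabla^k \u(x)\|.
\]
On $E_k$, the right-hand side is at most $\lambda^{-k/2} C_k(1 + d^{k/2}/n)$, and absorbing $\lambda^{-k/2}$ into a redefined $C_k$ (using that $\lambda$ is an absolute constant, as noted after Lemma~\ref{lma:Hlb}) gives the stated bound.

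The argument is entirely routine: the only mild subtlety is verifying $\sup_{r\geq 0} c_k(r) = \sup_x \|\nabla^k\u(x)\|_H$, which is immediate from the definition. There is no real obstacle, in line with the author's comment that the corollary follows from ``combining the above two lemmas.''
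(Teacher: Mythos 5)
Your proof is correct and follows the same route the paper intends: union-bound the probability estimates from Lemmas~\ref{lma:Hlb} and~\ref{lma:adam}, and then transfer the unweighted operator norm bounds to $H$-weighted ones via $\|T\|_H \leq \lambda^{-k/2}\|T\|$ on $E_H$. This is precisely what the paper indicates when it says the corollary follows by "combining the above two lemmas, and use the inequality $\|\nabla^k\u\|_H\leq\lambda^{-k/2}\|\nabla^k\u\|$."
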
 
With this corollary in hand, we now claim the key growth conditions on the $c_k$ from Corollary~\ref{prop:R1:corr}, namely~\eqref{ck-0} and~\eqref{ck-ev-R}, are satisfied with $\tau=1$. To see how much ``room" we have to satisfy these conditions, write~\eqref{ck-log} as
\beq\label{ck-log-2}\sup_{r\geq0}c_k(r)\les_k 1+d^{\frac k2 - 2}\,\frac{d^2}{n}.\eeq Since our goal is to obtain an expansion in powers of $d^2/n$, we are free to assume $d^2\leq n$. Using this assumption, we see from~\eqref{ck-log-2} that for even $k$, we obtain precisely the scaling $d^{\frac k2-2}$ required in~\eqref{ck-ev-R}. For $k=3$, we also precisely obtain $c_3(0)\les 1=d^{\frac 42-2}$ from~\eqref{ck-log-2}, as required in~\eqref{ck-0}. For odd $k\geq5$,~\eqref{ck-log-2} yields $c_k(r)\les_k d^{\frac k2-2}$, which is a power of $\sqrt d$ below the threshold $d^{\frac{k+1}{2}-2}$ required in~\eqref{ck-0}. Therefore, \emph{for all but odd $k\geq5$, the bound~\eqref{ck-log-2} on the growth of the derivatives with $d$ in this example are exactly at the threshold allowed by the growth conditions~\eqref{ck-0} and~\eqref{ck-ev-R}.}\\

It remains to check condition~\eqref{u-imply} from Assumption~\ref{assume:tail}. 
\begin{lemma}\label{lma:log:c3}Suppose $d^2\leq n$ and $n/d\geq C$ for some $C$ large enough. Then on the event $E_3\cap E_H$, the condition~\eqref{u-imply} holds for all $\|x-\xmin\|_H\geq\sqrt{d/n}$.
\end{lemma}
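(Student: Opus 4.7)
The plan is to exploit global convexity of $\u$ in order to reduce the inequality~\eqref{u-imply} to a purely local Taylor estimate at $\xmin$. With probability one, the $X_i$ span $\R^d$, so $\psi(x)=\frac1n\sum_{i=1}^n\phi(X_i^\T x)$ is strictly convex because $\phi$ is; since $\u(x)=\psi(x)-\nabla\psi(\xmin)^\T x$ differs from $\psi$ only by a linear term, $\u$ is strictly convex as well, and in particular this holds on $E_3\cap E_H$. By the convex branch of Remark~\ref{rk:grow}, it therefore suffices to verify the quadratic lower bound~\eqref{uminx}, namely $\u(x)-\u(\xmin)\geq \tfrac14\|x-\xmin\|_H^2$, at the single radius $\|x-\xmin\|_H=\sqrt{d/n}$, since the ratio $(\u(x)-\u(\xmin))/\|x-\xmin\|_H$ on the exterior region $\|x-\xmin\|_H\geq\sqrt{d/n}$ attains its infimum on that boundary.

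For the local bound I would use a second-order Taylor expansion at $\xmin$ with Lagrange remainder. Since $\nabla\u(\xmin)=0$ and $\nabla^2\u(\xmin)=H$,
$$\u(x)-\u(\xmin) = \tfrac12\|x-\xmin\|_H^2 + \tfrac16\lla\nabla^3\u(\xi),(x-\xmin)^{\otimes 3}\rra$$
for some $\xi$ on the segment between $\xmin$ and $x$, and the cubic remainder is bounded in absolute value by $\tfrac16 c_3(1)\|x-\xmin\|_H^3$ by the definition of the weighted operator norm $\|\cdot\|_H$.

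At this point I would invoke Corollary~\ref{corr:c:log}, which on $E_3$ yields $c_3(1)\leq C_3\bigl(1+d^{3/2}/n\bigr)$. Under the hypothesis $d^2\leq n$ one has $d^{3/2}/n\leq n^{-1/4}\leq 1$, so $c_3(1)\leq 2C_3$. Substituting gives, for $\|x-\xmin\|_H=\sqrt{d/n}$,
$$\u(x)-\u(\xmin) \;\geq\; \|x-\xmin\|_H^2\Bigl(\tfrac12-\tfrac{C_3}{3}\|x-\xmin\|_H\Bigr) \;\geq\; \tfrac14\|x-\xmin\|_H^2,$$
provided $\sqrt{d/n}\leq 3/(4C_3)$. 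The latter inequality is precisely the hypothesis $n/d\geq C$ with $C:=(4C_3/3)^2$, explaining why such a condition appears in the statement. This verifies~\eqref{uminx} on the relevant boundary, and the convexity reduction then upgrades it to~\eqref{u-imply} with $\av_\u=1/4$ on the entire exterior.

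No real obstacle is expected here: the argument is a routine Taylor-plus-convexity computation and the only delicate point is tracking constants so that exactly $\av_\u=\tfrac14$ is achieved. The roles of the two hypotheses are transparent and complementary: $d^2\leq n$ is what makes $c_3(1)=O(1)$ through Corollary~\ref{corr:c:log}, while $n/d\geq C$ is what ensures $\sqrt{d/n}$ is small enough compared with $1/c_3(1)$ so that the cubic Taylor remainder is a negligible perturbation of the quadratic leading term on the scale $\|x-\xmin\|_H\sim\sqrt{d/n}$.
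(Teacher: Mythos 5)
Your proposal is correct and follows essentially the same route as the paper: reduce to the boundary sphere $\|x-\xmin\|_H=\sqrt{d/n}$ by convexity, Taylor-expand to third order there, and control the cubic remainder by showing $c_3(1)=\mathcal O(1)$ via Corollary~\ref{corr:c:log} under $d^2\leq n$ and then taking $n/d$ large. The only cosmetic difference is that you re-derive the convexity-plus-Taylor estimate inline, whereas the paper simply applies Lemma~\ref{lma:cvx} with $r=1$ (which packages the exact same computation and yields the threshold $c_3(1)\sqrt{d/n}\leq 3/2$ you arrive at).
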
 
\begin{proof}Since $\u$ is convex, we can apply Lemma~\ref{lma:cvx} with $r=1$. The lemma implies that if $c_3(1)\sqrt{d/n}\leq 1$, then~\eqref{u-imply} is satisfied. But indeed, by Corollary~\ref{corr:c:log} and the assumption $d^2\leq n$, we have $c_3(1)\leq C$ on $E_3\cap E_H$. Thus $c_3(1)\sqrt{d/n}\leq 1$ for $n/d$ large enough.
\end{proof} 
\begin{corollary}\label{corr:expand:log}
Let $\u$ be as in~\eqref{udef-log}, with $\|\xmin \|=1$, $\phi$ as in~\eqref{letphi}, and $X_i\iid\mathcal N(0, I_d)$, $i=1,\dots, n$. Let $$\Rad = 20\l(\frac Ld\log\l(\frac{n}{d^2}\r)\vee 2\r).$$ Suppose $g$ is $2L$ times continuously differentiable in the region $\{\|x-\xmin \|_H \leq \Rad\sqrt{d/n}\}$ and $|g(x)| \leq \exp(\sqrt {dn}\|x-\xmin\|_H/12)$ for all $\|x-\xmin\|_H\geq\Rad\sqrt{d/n}$. Also, suppose there exist constants $C_k>0$ such that
\beqs\label{gcond-log}
\|\nabla^kg(\xmin)\|&\leq C_kd^{\frac {k+1}2},\qquad\forall \; k=1,3,\dots,2L-1,\\
\sup_{\|y\|_H\leq\Rad\sqrt{d/n}}\|\nabla^kg(\xmin+y)\|&\leq C_{L} d^{\frac k2},\qquad\forall \; k=0,2,\dots,2L.
\eeqs
If $d^2/n\leq1/4$ and $n/d > C$ for $C>0$ large enough, then on the high probability event $E$ from Corollary~\ref{corr:c:log} it holds
\beqs\label{RemL-log}
e^{n\u(0)}\frac{\sqrt{\det (nH)}}{(2\pi)^{d/2}}\int_{\R^d} g(x)e^{-n\u(x)}dx &=g(\xmin )+\sum_{k=1}^{L-1}A_{2k}n^{-k}+\Rem_{L},\\
|\Rem_{L}| &\les_{L}(d^2/n)^L.
\eeqs
\end{corollary}The proof follows immediately from the above lemmas and Corollary~\ref{prop:R1:corr}. In particular, $d^2/n\leq1/4$ ensures~\eqref{tau-ep} is satisfied. Note that we removed the $H$ weighting from the derivative tensor norms in~\eqref{gcond-log}. This is possible since we know that $H\succeq\lambda I_d$ on the event $E$, for an absolute constant $\lambda>0$.

\begin{remark}The focus of the present paper is on the remainder bounds rather than on the terms of the expansion. However, for this example, the terms are of practical interest e.g. in Bayesian inference, where $\int ge^{-n\u}$ represents an unnormalized posterior integral in a GLM. We therefore also present the formula for $A_2$; see Appendix~\ref{app:log}. 

The function $\u$ considered in~\eqref{udef-log} has two simplifications compared to the ``actual" $\u$ used in practice: $-n\u$ is a population log likelihood, whereas typically $-n\u$ is a sample log likelihood plus a log prior. A prior can easily be incorporated into the function $g$; in other words, instead of defining $e^{-n\u}$ to be the unnormalized posterior density, as we did in the introduction to this section, we can define it to be the likelihood. The fact that $-n\u$ is a population- rather than sample log likelihood has no bearing on the formula for the terms, since second and higher-order derivatives of the sample and population log likelihood are the same for GLMs in canonical form. \end{remark}

\section{Proof of Theorem~\ref{thm:expand}}\label{sec:proof}In Section~\ref{sec:initsimp}, we make some initial simplifications to the integral of interest. In Section~\ref{sec:outline} we give a proof outline and compare our method to other techniques to derive the Laplace expansion. In Section~\ref{sec:taylor}, we derive a Taylor expansion of our integral in a local region, with respect to the small parameter $1/\sqrt n$. This leads to the decomposition~\eqref{eq:thm:expand}. We bound the terms $A_{2k}$ and the remainder $\Rem_L$ in Section~\ref{sec:rem}, and obtain the explicit formula for the $A_{2k}$ in Section~\ref{sec:lead}.

\subsection{Initial simplifications}\label{sec:initsimp}
We begin with a reparameterization: 
$$
e^{n\u(\xmin)}\sqrt{\det H}\int g(y)e^{-n\u(y)}dy = \int f(x)e^{-nv(x)}dx,
$$ where
$$
v(x)=\u(\xmin +H^{-1/2}x)-\u(\xmin),\qquad f(x)=g(\xmin +H^{-1/2}x).
$$ 
We record Assumptions~\ref{assume:min},~\ref{assume:CL}, and~\ref{assume:tail} as well as the quantities $c_k,c_{k,g}$ in the language of $v$ and $f$. First, $v$ has a unique global minimizer at $x=0$, with $\nabla^2v(0)=I_d$. Second,
\beqs\label{fv-reg}
f\in C^{2L}(n^{-1/2}\bar\U),\qquad v\in C^{2L+2}(n^{-1/2}\bar\U),
\eeqs where $\bar\U=\{x\in\R^d:\|x\|\leq\Rad\sqrt d\}$ is the closure of the open set $\U$. As explained in Section~\ref{subsec:assume}, this condition means that the derivatives $\nabla^kf$, $k=0,\dots,2L$ and $\nabla^kv$, $k=0,\dots,2L+2$ exist and are uniformly continuous in the open set $n^{-1/2}\U$, and can therefore be uniquely extended to continuous functions up to the boundary. Third, it holds
\begin{align}
|f(x)|\exp(-nv(x)) \leq \exp(-\sqrt{dn}\|x\|/4),\quad\forall \|x\|\geq\Rad\sqrt{d/n}.\label{v-av-growth-ii}\end{align}
Finally, note that $\|\nabla^kv(x)\| = \|\nabla^k\u(\xmin +H^{-1/2}x)\|_{H}$ for all $k\geq1$ and $\|\nabla^kf(x)\| = \|\nabla^kg(\xmin +H^{-1/2}x)\|_{H}$ for all $k\geq0$. Using this it is straightforward to show that $c_k,c_{k,g}$ from Definition~\ref{def:cvf} can be expressed as bounds on the unweighted operator norms of the tensors $\nabla^kv$ and $\nabla^kf$. Specifically,
\beqsn
\sup_{\|x\|\leq \Rad\sqrt{d/n}}\|\nabla^kv(x)\|=&c_{k}(\Rad),\\
\sup_{\|x\|\leq \Rad\sqrt{d/n}}\|\nabla^kf(x)\|=&c_{k,g}(\Rad).\eeqsn
Thus although the notation $c_{k,g}$ refers to the function $g$, one should keep in mind that $c_{k,g}$ is also an upper bound on $\|\nabla^kf\|$. Correspondingly $\alpha_{k,g}$ is a downweighted upper bound on $\|\nabla^kf\|$.\\

Now, define the functions $F,\rr:\R\times\R^d\to\R$, and $I:\R\to\R$ as follows:
\begingroup
\addtolength{\jot}{0.5em}
\begin{align}
F(t,x) &= f(tx),\label{F-def}\\
\rr(t,x)&=\begin{dcases}
\frac{v(tx)}{t^2}-\frac12\|x\|^2,&t\neq0,\\
0, &t=0,\end{dcases}\label{r-def}\\
I(t)&=\int_\U F(t, x)\,e^{-\rr(t, x)}\gamma(dx),\label{I-def}
\end{align}
\endgroup 
where $\gamma(dx)=(2\pi)^{-d/2}e^{-\|x\|^2/2}dx$ is the standard normal distribution and $$\U=\{x\in\R^d:\|x\|<\Rad\sqrt d\}.$$ Using basic manipulations, we obtain the following initial decomposition of the integral of interest. 
\begin{lemma}\label{lma:basic-decomp}It holds

\begingroup
\addtolength{\jot}{0.5em}
\beqs\label{main-ratio}
\frac{e^{n\u(\xmin)}\sqrt{\det H}}{(2\pi/n)^{d/2}}&\int_{\R^d} g(x)e^{-n\u(x)}dx\\
= &\int_\U F(1/\sqrt n, \,x)\,e^{-\rr(1/\sqrt n, x)}\gamma(dx)+(2\pi)^{-d/2}\int_{\U^c}f(x/\sqrt n)e^{-nv(x/\sqrt n)}dx\\
=&\,\;\quad I(1/\sqrt n) \qquad\qquad\qquad\qquad\;\;+\qquad\qquad\qquad \maus{\U^c},
\eeqs
\endgroup
 where $\maus{\U^c}$ is defined to be the second summand in the second line.
\end{lemma}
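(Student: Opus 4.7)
The plan is to prove the decomposition by two elementary changes of variables followed by a careful identification of the Gaussian density factor. The lemma is essentially a bookkeeping statement, designed to package the prefactor $e^{n\u(\xmin)}\sqrt{\det H}/(2\pi/n)^{d/2}$ together with the rescaled integrand into a Gaussian expectation of $F(1/\sqrt n,x)e^{-r(1/\sqrt n,x)}$ over the local region $\U$, with the complementary piece over $\U^c$ held aside as $\tau_{\U^c}$. So I expect no real obstacle, only a sequence of substitutions that must line up correctly with the definitions of $F$, $r$, and $\U$.

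First I would apply the affine substitution $y\mapsto \xmin +H^{-1/2}y$ in the integral $\int_{\R^d} g(y)e^{-n\u(y)}dy$, whose Jacobian is $(\det H)^{-1/2}$. By the definitions $f(x)=g(\xmin +H^{-1/2}x)$ and $v(x)=\u(\xmin +H^{-1/2}x)-\u(\xmin)$, this gives
\[
\int_{\R^d} g(y)e^{-n\u(y)}dy \;=\; (\det H)^{-1/2}e^{-n\u(\xmin)}\int_{\R^d} f(x)e^{-nv(x)}dx.
\]
Multiplying by the prefactor on the LHS of~\eqref{main-ratio} cancels $e^{n\u(\xmin)}$ and $\sqrt{\det H}$, leaving $(2\pi/n)^{-d/2}\int f(x)e^{-nv(x)}dx$. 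Next I would rescale $x\mapsto x/\sqrt n$, whose Jacobian $n^{-d/2}$ absorbs the remaining factor of $n^{d/2}$, yielding $(2\pi)^{-d/2}\int_{\R^d} f(x/\sqrt n)e^{-nv(x/\sqrt n)}dx$.

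Finally I would split the domain into $\U$ and $\U^c$. The contribution from $\U^c$ is exactly $\tau_{\U^c}$ by definition. For the contribution from $\U$, the key algebraic observation is that from the definition~\eqref{r-def} of $r$ with $t=1/\sqrt n$,
\[
nv(x/\sqrt n) \;=\; \frac{v((1/\sqrt n)x)}{(1/\sqrt n)^2} \;=\; r(1/\sqrt n,x) + \tfrac12\|x\|^2,
\]
so that $(2\pi)^{-d/2}e^{-nv(x/\sqrt n)}dx = e^{-r(1/\sqrt n,x)}\gamma(dx)$. Combined with $F(1/\sqrt n,x)=f(x/\sqrt n)$, this turns the integral over $\U$ into $\int_\U F(1/\sqrt n,x)e^{-r(1/\sqrt n,x)}\gamma(dx)=I(1/\sqrt n)$, completing the decomposition. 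No regularity beyond what makes the two substitutions valid is needed, and the argument is routine; the content of the lemma lies entirely in recognizing that the chosen normalizations and definitions of $F$, $r$, and $\gamma$ are precisely those that produce this clean form, which is the starting point for the Taylor expansion in $t=1/\sqrt n$ carried out in Section~\ref{sec:taylor}.
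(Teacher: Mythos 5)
Your proof is correct and follows essentially the same path as the paper's: the affine substitution $y\mapsto\xmin+H^{-1/2}y$ (which the paper records as the reparameterization at the start of Section~\ref{sec:initsimp}), the rescaling $x\mapsto x/\sqrt n$, the split over $\U$ and $\U^c$, and the algebraic identity $nv(x/\sqrt n)=\rr(1/\sqrt n,x)+\tfrac12\|x\|^2$ that converts the local integrand into $F\,e^{-\rr}$ against $\gamma$. There is nothing to add; the two arguments coincide step for step.
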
 See Appendix~\ref{app:sec:proof} for the proof. 

\subsection{Proof outline and comparison to the literature}\label{sec:outline}
The local integral $I(1/\sqrt n)$ is the main object of study. We analyze it in two steps. First, we Taylor expand $I(t)$ about $0$ to order $L-1$, with $L$th order remainder. Substituting $t=1/\sqrt n$, we already essentially obtain the decomposition~\eqref{eq:thm:expand}, with $A_{2k}$ as in~\eqref{A-expect}, and with the remainder $\Rem_L$ given by a nearly explicit integral. See Lemma~\ref{lma:Taylor} in Section~\ref{sec:taylor} for this main first result. The second step is to bound the $A_{2k}$ and $\Rem_L$, which we do in Section~\ref{sec:rem}. This second step is where the high-dimensionality necessitates a much more careful analysis. 

There is a final less central part of the proof, which is thematically unrelated to the two steps we have described. Namely, we obtain an explicit formula for the $A_{2k}$ and confirm that it matches the known formula for the terms of the standard multivariate Laplace asymptotic expansion. This step confirms that the expansion we have derived (and justified in the high-dimensional regime) is precisely the standard expansion known to hold in fixed $d$. The explicit formula obtained by~\cite{kirwin2010higher} follows from a different intermediate representation of the $A_{2k}$, which is why we need to do this final step. See Section~\ref{sec:lead} for this calculation, which simply requires careful tracking of indices and tensor manipulations.

We now describe the two main steps in more detail. We then compare our proof technique to those of prior works. 


\paragraph{Step 1: Expansion of $I(1/\sqrt n)$.} Note that $\rr(1/\sqrt n,x) = nv(x/\sqrt n)-\|x\|^2/2$ is the remainder after removing the first, quadratic term in the Taylor expansion of $x\mapsto nv(x/\sqrt n)$ about zero. Thus the Taylor series for $\rr(1/\sqrt n,x)$ is given by the higher order terms in the Taylor series of $nv(x/\sqrt n)$. Substituting this series into the argument of the exponential $\exp(-\rr)$, and replacing $f(x/\sqrt n)$ by its Taylor series expansion about zero, 
we get the following formal representation of the integrand of $I(1/\sqrt n)$:
\beqs\label{F-r-exp}
F(1/\sqrt n, x)&e^{-\rr(1/\sqrt n,\, x)} = f(x/\sqrt n)\exp(-[nv(x/\sqrt n)-\|x\|^2/2])\\
= &\l(f(0)+n^{-1/2}\la\nabla f(0), x\ra + \frac12n^{-1}\la\nabla^2f(0), x^{\otimes 2}\ra+\dots\r)\\
&\times\exp\l(-\l[\frac{1}{3!}n^{-1/2}\la\nabla^3v(0), x^{\otimes 3}\ra+\frac{1}{4!}n^{-1}\la\nabla^4v(0), x^{\otimes 4}\ra+\dots\r]\r)
\eeqs 
Thus we need to expand the integral of this expression in powers of $n^{-1/2}$. We do this by Taylor expanding $I(t)=\int_{\U}F(t,\cdot)e^{-\rr(t,\cdot)}d\gamma$ about $t=0$ and evaluating the Taylor expansion at $t=n^{-1/2}$. Note that the coefficients of this expansion are indeed given by Gaussian expectations, of the form
$$
\int_{\U}\partial_t^k\l(F(t,\cdot)e^{-\rr(t,\cdot)}\r)\big\vert_{t=0}d\gamma.
$$
The derivatives at $t=0$ reduce to polynomials in $x$. The computation of the $t$ derivatives of $Fe^{-\rr}$ is simplified by the use of Faa di Bruno's formula for derivatives of composite functions, such as $\exp(-\rr(t,x))$. The utility of Faa di Bruno's formula in deriving the Laplace asymptotic expansion has been discussed in~\cite{wojdylo2006computing}, in the context of a different proof technique.

\paragraph{Step 2. Bounding $A_{2k}$ and $\Rem_L$.}In this overview, we will highlight two key intermediate bounds at the heart of our proof and which yield a tighter overall dimension dependence than has been obtained in prior works in high dimensions (see Section~\ref{intro:stateofart}). See Section~\ref{sec:rem} for how these key intermediate bounds are used. 

Specifically, expectations of polynomials and exponential functions of $Z\sim\mathcal N(0,I_d)$ arise as a central quantity, and we bound them using the following two powerful inequalities:
 \begin{align}
\E\l[\lla T, Z^{\otimes k}\rra^q\r]^{\frac1q} &\les_{k,q}  \|T\|{\sqrt d}^{k-1}, \qquad\text{$k$ odd}\label{intro:lip1}\\
\E\l[e^{q\phi(Z)}\ind_\U(Z)\r]^{\frac1q} &\leq \exp\l(\E[\phi(Z)\ind_\U(Z)]+\frac{q}{2}\sup_{x\in\U}\|\nabla \phi(x)\|^2\r).\label{intro:lip2}
\end{align} Here, $q$ is even, $T$ is a tensor of appropriate dimensions and $Z\sim\mathcal N(0, I_d)$. The bound~\eqref{intro:lip2} is an immediate application of the concentration of Lipschitz functions of multivariate Gaussians~\cite[Chapter 2]{pisier1986probabilistic},~\cite[Chapter 5]{vershynin2018high}; see Section~\ref{app:sec:lip}. We prove the bound~\eqref{intro:lip1} in Proposition~\ref{lma:pk-bds} using this same concentration property and a bit of extra work. We note that~\eqref{intro:lip1} may also follow as a special case of the work~\cite{latala2006estimates} on moments of decoupled Gaussian chaoses, when combined with the result of~\cite{de1995decoupling} showing that the decoupling does not significantly affect the moments. However, we choose to provide our own proof to make the paper self-contained.

The bound~\eqref{intro:lip1} for odd $k$ is tighter than the bound $\E\l[\lla T, Z^{\otimes k}\rra^q\r]^{\frac1q}\les_{k,q}\|T\|{\sqrt d}^k$ which holds for all $k$ and which follows by the simple operator norm inequality, $|\la T, Z^{\otimes k}\ra|\leq\|T\|\|Z\|^k$ (for even $k$, this is the best we can do). Let us show why~\eqref{intro:lip2} also yields a tighter bound than the more direct bound $\E\l[e^{q\phi(Z)}\ind_\U(Z)\r]^{\frac1q}\leq \exp(\sup_{x\in\U}\phi(x))$. To see this we note that the function relevant to us is $$\phi(x)=\rr(t,x)\approx  \frac{t}{3!}\la\nabla^3v(0), x^{\otimes 3}\ra$$ for some $t\in(0,1/\sqrt n)$, where the approximation follows from a Taylor expansion about $x=0$. Thus to simplify the discussion let us redefine $\phi$ to be $\phi(x)=n^{-1/2}\la T, x^{\otimes3}\ra$. For this $\phi$, the supremum of $\|\nabla\phi(x)\|^2$ over all $\|x\|\leq\Rad\sqrt d$ is given by $\mathcal O((d/\sqrt n)^2)$, up to the dependence on $\Rad$ and $\|T\|$. In contrast, the supremum of $\phi(x)$ over all $\|x\|\leq\Rad\sqrt d$ is given by $\mathcal O(d\sqrt d/\sqrt n)$. Thus the bound~\eqref{intro:lip2}  in terms of the \emph{gradient} of $\phi$ gives a better dimension dependence than a bound in terms of \emph{the function $\phi$ itself}.\\

Thus to summarize, there are two key proof ingredients. The first ingredient is the Taylor expansion of $I(t)$, which gives us a simple explicit formula for the terms and remainder of the Laplace expansion for any $d$, in terms of Gaussian expectations. The second key ingredient~\eqref{intro:lip1},~\eqref{intro:lip2}, which uses the powerful theory of Gaussian concentration in high dimensions, gives us tight control on the terms and remainder, in powers of $d^2/n$. 

The bound~\eqref{intro:lip2}, as well as~\eqref{intro:lip1} with $k=3$, were a key feature in our earlier work~\cite{katskew} proving $d^2\ll n$ is generically necessary and sufficient for accuracy of the Laplace approximation to probability densities of the form $\pi\propto e^{-nv}$. Thus to some extent, the present work builds on these earlier results.

\paragraph{Proof techniques from prior works}
We first discuss past approaches that have been used to derive the coefficients of the Laplace asymptotic expansion in the univariate and multivariate (but fixed $d$) settings. 
The proof strategy closest to ours can be found in~\cite[Chapter 4.4]{de1981asymptotic}, for the one-dimensional Laplace expansion. The author expands the series in~\eqref{F-r-exp} in powers of $n^{-1/2}$ to obtain coefficients given by Gaussian integrals, but does not provide explicit formulas. This is also essentially the approach taken by~\cite{raudenbush2000maximum} in the context of a multivariate Laplace expansion for a particular statistical model. 

Another method which allows one to write the coefficients as Gaussian expectations is to apply the Morse Lemma, which states that there is a change of variables $x\mapsto y$ such that $v(x)=\frac12\sum_{j=1}^dy_j^2$~\cite[Chapter 9]{wongbook}. However, this change of variables is somewhat opaque, and derivatives of this coordinate transformation are required to compute the coefficients. Obtaining explicit expressions for these derivatives is very involved in high dimensions. 

The most prevalent proof of the Laplace expansion in one dimension relies on a different change of variables, which brings the integral into the form $\int f(y)e^{-n y}dy$; see~\cite{wojdylo2006computing},~\cite[pp. 36-39]{erdelyi1956asymptotic}, and~\cite[pp. 80-82]{olver1997asymptotics}. In fact this proof technique has also been extended to the multivariate case~\cite{kirwin2010higher,fulks1961asymptotics}. In this approach, only an asymptotic expansion of the functions $f$ and $v$ is required, rather than smoothness. Explicit expressions for the coefficients have been obtained using this method by~\cite{kirwin2010higher}, for the multivariate expansion. 

The aforementioned works have all considered the fixed $d$ regime. In the high-dimensional regime, the main challenge is to obtain tight, dimension-dependent upper bounds on the terms and remainder. Most prior works in the high-dimensional regime (discussed in Section~\ref{intro:stateofart}) have obtained explicit bounds only on the zeroth order Laplace expansion. These works have shown that $|\Rem_1|$ can be bounded in terms of $d^3/n$ (at best), and the proof techniques seem to involve the coarser bounds on the lefthand sides of~\eqref{intro:lip1} and~\eqref{intro:lip2} mentioned above; see for example~\cite{barber2016laplace} and~\cite{lapinski2019multivariate}. 

\subsection{Step 1: Taylor expansion of local integral}\label{sec:taylor} Below, we will Taylor expand the function $I(t)$ from~\eqref{I-def} about $t=0$. To that end, we first prove some properties about the derivatives of $\rr$ and $F$.
\begin{lemma}\label{lma:W:Ck}
Let $\rr$ be as in~\eqref{r-def} and recall from~\eqref{fv-reg} that $v\in C^{2L+2}\l(\bar\U/\sqrt n\r)$. Then we have the following alternative representation for $\rr$:
\beq\label{rr-alt} \rr(t,x) = \frac{t}{2}\int_0^1\la\nabla^{3}v(qtx), x^{\otimes 3}\ra (1-q)^{2}dq \qquad\forall |t|\in [0,1/\sqrt n), \; x\in\U\eeq Also, for each $x\in\U$ the function $\rr(\cdot,x)$ is $2L$ times continuously differentiable in $(0,1/\sqrt n)$, and for $1\leq k\leq 2L$ we have 
\beq\label{partial-t}
\partial_t^k\rr(t,x)= \int_0^1\la\nabla^{k+2}v(qtx), x^{\otimes k+2}\ra q^k(1-q)dq\qquad\forall t\in(0,1/\sqrt n), \; x\in\U.
\eeq 
\end{lemma}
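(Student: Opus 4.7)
The plan is to derive both assertions from Taylor's theorem with integral remainder, applied to the smooth function $\phi_x(t):=v(tx)$ for each fixed $x\in\U$. After the normalization carried out in Section~\ref{sec:initsimp} we have $v(0)=0$, $\nabla v(0)=0$, and $\nabla^2 v(0)=I_d$, hence $\phi_x(0)=\phi_x'(0)=0$ and $\phi_x''(0)=\|x\|^2$. Moreover, for $|t|<1/\sqrt n$ and $x\in\U$ the point $tx$ lies in $n^{-1/2}\bar\U$, so by the regularity hypothesis $v\in C^{2L+2}(n^{-1/2}\bar\U)$ the derivative $\phi_x^{(k)}(t)=\la\nabla^k v(tx),x^{\otimes k}\ra$ is jointly continuous in $(t,x)$ for all $k\leq 2L+2$.

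To prove~\eqref{rr-alt}, I would apply Taylor to order $2$ with third-order integral remainder to $\phi_x$, obtaining
\[
v(tx) \;=\; \frac{t^2}{2}\|x\|^2 + \frac{t^3}{2}\int_0^1 (1-q)^2\,\la\nabla^3 v(qtx),\, x^{\otimes 3}\ra\,dq.
\]
Dividing by $t^2$ and subtracting $\|x\|^2/2$ yields~\eqref{rr-alt} for $0<|t|<1/\sqrt n$, and both sides vanish at $t=0$ by the definition of $\rr$.

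To prove~\eqref{partial-t}, I would induct on $k$. The base case $k=1$ is most cleanly derived from an alternative integral representation of $\rr$, obtained by Taylor to first order with second-order remainder: $v(tx) = t^2\int_0^1 (1-q)\la\nabla^2 v(qtx), x^{\otimes 2}\ra\,dq$. Using $\|x\|^2/2 = \|x\|^2\int_0^1(1-q)\,dq$, this gives
\[
\rr(t,x) \;=\; \int_0^1 (1-q)\bigl[\la\nabla^2 v(qtx), x^{\otimes 2}\ra - \|x\|^2\bigr]\,dq.
\]
One differentiation in $t$ under the integral produces $\partial_t \rr(t,x) = \int_0^1 q(1-q)\la\nabla^3 v(qtx), x^{\otimes 3}\ra\,dq$, which is the $k=1$ case. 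For the inductive step, assuming the formula at level $k-1$, one further differentiation in $t$ under the integral brings down a factor $q$ and advances the order of the derivative of $v$ by one, yielding the claim at level $k$.

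The only technical point is the legitimacy of differentiating under the integral at each step, which reduces to joint continuity of $(t,q)\mapsto\la\nabla^{k+2}v(qtx), x^{\otimes k+2}\ra$ on a neighborhood of any $(t_0,q)\in(0,1/\sqrt n)\times[0,1]$; this is supplied by $v\in C^{2L+2}(n^{-1/2}\bar\U)$ as long as $k\le 2L$. Continuity of $\partial_t^k \rr(\cdot,x)$ on $(0,1/\sqrt n)$ follows by the same observation together with dominated convergence. No nontrivial estimate is required: the entire content of the lemma lies in choosing the two Taylor expansions appropriately and carefully interchanging $\partial_t$ with the integral in $q$.
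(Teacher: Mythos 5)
Your proof is correct. Both your argument and the paper's rest on the same two ingredients — a Taylor-integral representation of $\rr$ and repeated differentiation under the integral sign — so this is the same route at the level of ideas, but your choice of intermediate representation genuinely streamlines the $k=1$ computation. You start from the second-order Taylor remainder, writing
\[
\rr(t,x)=\int_0^1(1-q)\bigl[\la\nabla^2 v(qtx),\,x^{\otimes 2}\ra-\|x\|^2\bigr]\,dq,
\]
where the only $t$-dependence sits inside $\nabla^2 v(qtx)$; each $t$-differentiation then brings down exactly one factor of $q$ and raises the order of the $v$-derivative, so the inductive step is immediate. The paper instead differentiates starting from the third-order representation~\eqref{rr-alt}, which carries a prefactor of $t$; to handle this it substitutes $q=s/t$ to obtain $\rr(t,x)=\tfrac{t^{-2}}{2}\int_0^t\la\nabla^3 v(sx),x^{\otimes 3}\ra(t-s)^2\,ds$, differentiates by the product rule (noting the integrand vanishes at $s=t$), reverts the substitution, and combines two integrals via $(1-q)-(1-q)^2=q(1-q)$ to recover the $k=1$ formula, after which it differentiates under the integral for $k\geq 2$. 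Your version avoids the change of variables and the recombination step; the justification for interchanging $\partial_t$ and $\int_0^1$ is the same in both proofs, namely joint continuity of $(t,q)\mapsto\la\nabla^{k+2}v(qtx),x^{\otimes k+2}\ra$ supplied by $v\in C^{2L+2}(n^{-1/2}\bar\U)$.
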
 See Appendix~\ref{app:sec:proof} for the proof. Using~\eqref{rr-alt},~\eqref{partial-t}, and that $v\in C^{2L+2}\l(\bar\U/\sqrt n\r)$, we conclude $\partial_t^k\rr$, $k=0,\dots,2L$ is uniformly continuous in $(0,1/\sqrt n)\times\U$. In turn, this implies $\partial_t^k\rr$ has a unique continuous extension to $t=0$, which we can obtain by simply setting $t=0$ in~\eqref{partial-t}. We compute explicitly that
\beq\label{partial-0}
\partial_t^k\rr(0,x)=\frac{\la\nabla^{k+2}v(0), x^{\otimes k+2}\ra}{(k+1)(k+2)},\qquad\forall x\in\U,\;\forall 1\leq k\leq 2L.
\eeq
Next we discuss $F$. From the fact that $f\in C^{2L}(\bar\U/\sqrt n)$, we immediately conclude that $F(\cdot, x)$ is $2L$ times differentiable in $(0,1/\sqrt n)$ for each $x\in\U$, with
\beq\label{partial-f}
\partial_t^kF(t,x) = \lla\nabla^kf(tx),\;x^{\otimes k}\rra.
\eeq
Furthermore, $\partial_t^kF$ is uniformly continuous in $(0,1/\sqrt n)\times\U$ for all $k=0,\dots, 2L$. This again follows from the fact that $f\in C^{2L}(\bar\U/\sqrt n)$.
\begin{corollary}\label{corr:Ik}The function $I$ from~\eqref{I-def} is $2L$ times differentiable in $(0,1/\sqrt n)$, and the functions $I^{(k)}$, $k=0,\dots,2L$ extend continuously to the closed interval $[0,1/\sqrt n]$.  Furthermore, \beq\label{Ikt}I^{(k)}(t) = \int_\U \partial_t^k\l(F(t, x)\,e^{-\rr(t, x)}\r)\gamma(dx)\eeq for all $k=0,1,\dots,2L$, $t\in(0,1/\sqrt n)$.
\end{corollary}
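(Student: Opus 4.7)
}
The plan is to apply the standard Leibniz rule for differentiation under the integral sign inductively in $k=1,2,\dots,2L$, together with a dominated-convergence argument that uses the boundedness provided by the uniform continuity statements just established for $\partial_t^k r$ and $\partial_t^k F$.

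First, by Lemma~\ref{lma:W:Ck} and~\eqref{partial-f} together with the regularity $f\in C^{2L}(\bar\U/\sqrt n)$, $v\in C^{2L+2}(\bar\U/\sqrt n)$, for each fixed $x\in\U$ the map $t\mapsto F(t,x)e^{-\rr(t,x)}$ is $2L$ times continuously differentiable on $(0,1/\sqrt n)$. Its $k$-th $t$-derivative can be written explicitly by combining the Leibniz product rule with Faà di Bruno's formula applied to the composition $\exp(-\rr(t,x))$: this yields a finite sum of products of the form $\partial_t^{j_0}F(t,x)\cdot e^{-\rr(t,x)}\cdot\prod_i \bigl(\partial_t^{j_i}\rr(t,x)\bigr)^{m_i}$, with $j_0,j_i\le k$. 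Each factor is jointly uniformly continuous on $(0,1/\sqrt n)\times\U$ by~\eqref{partial-t} and~\eqref{partial-f}, and hence extends continuously to the compact set $[0,1/\sqrt n]\times\bar\U$. In particular, for any closed subinterval $[a,b]\subset(0,1/\sqrt n)$ there is a constant $M_k=M_k(a,b)$ such that
\[
\Bigl|\partial_t^k\bigl(F(t,x)e^{-\rr(t,x)}\bigr)\Bigr|\le M_k\qquad\forall\,(t,x)\in[a,b]\times\U.
\]
Since the constant function $M_k$ is $\gamma$-integrable over $\U$, the standard Leibniz rule (see any textbook dominated-convergence version, e.g. differentiate the difference quotient and dominate) applies. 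An induction on $k$ from $0$ to $2L$ then gives that $I$ is $2L$ times differentiable on $(0,1/\sqrt n)$ with
\[
I^{(k)}(t)=\int_\U \partial_t^k\bigl(F(t,x)e^{-\rr(t,x)}\bigr)\gamma(dx),\qquad t\in(0,1/\sqrt n),
\]
which is~\eqref{Ikt}.

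It remains to extend $I^{(k)}$ continuously to the endpoints $t=0$ and $t=1/\sqrt n$. By the same representation, the integrand $\partial_t^k(Fe^{-\rr})$ is uniformly continuous on $(0,1/\sqrt n)\times\U$ and therefore extends uniquely to a continuous, uniformly bounded function on $[0,1/\sqrt n]\times\bar\U$ (using~\eqref{partial-0} at $t=0$). Dominated convergence with the constant majorant $M_k(0,1/\sqrt n)$ then implies that $t\mapsto\int_\U\partial_t^k(Fe^{-\rr})d\gamma$ is continuous on the closed interval $[0,1/\sqrt n]$. This furnishes the desired continuous extension of $I^{(k)}$.

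There is no serious obstacle here: the only point requiring care is that the natural domain of $\rr$ excludes $t=0$, so one cannot simply invoke $C^{2L}$ regularity on a closed interval at the outset. The remedy is precisely what Lemma~\ref{lma:W:Ck} is designed for --- the integral representations~\eqref{rr-alt} and~\eqref{partial-t} give uniform continuity on the open product set, which together with the compactness of $\bar\U$ and the Gaussian integrability is enough to run dominated convergence on any closed subinterval and then to extend continuously to the endpoints.
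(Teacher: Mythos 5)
Your proof is correct and follows essentially the same route as the paper: establish uniform continuity and boundedness of $\partial_t^k(Fe^{-\rr})$ on $(0,1/\sqrt n)\times\U$ using the preceding regularity lemmas, justify the exchange of $\partial_t^k$ and $\int_\U\cdot\,d\gamma$ by dominated convergence, and extend $I^{(k)}$ continuously to $[0,1/\sqrt n]$ via the unique continuous extension of the (uniformly continuous, bounded) integrand to the compact closure. The only cosmetic difference is that you invoke Fa\`a di Bruno's formula explicitly at this stage, which the paper defers to the proof of Lemma~\ref{lma:Taylor}.
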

\begin{proof}We know that $F(\cdot, x)$ and $\rr(\cdot,x)$ are $2L$ times differentiable in $(0,1/\sqrt n)$ for each $x\in\U$ and that $\partial_t^kF,\partial_t^k\rr$ are uniformly continuous in $(0,1/\sqrt n)\times\U$ for all $k=0,\dots,2L$. Hence $\partial_t^kF,\partial_t^k\rr$ extend uniquely to continuous functions on $[0,1/\sqrt n]\times\bar\U$. This implies
$$\sup_{t\in(0,1/\sqrt n),x\in\U}|\partial_t^kF(t,x)| <\infty,\quad \sup_{t\in(0,1/\sqrt n),x\in\U}|\partial_t^k\rr(t,x)| <\infty,\quad\forall 0\leq k\leq 2L.$$ Together, these regularity conditions suffice to show the derivatives $I^{(k)}$ exist and are given as in~\eqref{Ikt} for all $k=0,1,\dots,2L$, $t\in(0,1/\sqrt n)$. In other words, we can exchange the integral and derivative. Furthermore, the above properties imply $\partial_t^k\l(F(t, x)\,e^{-\rr(t, x)}\r)$ is also uniformly continuous in $(0,1/\sqrt n)\times\U$ for all $k=0,\dots,2L$, which in turn implies $I^{(k)}$ is uniformly continuous in $(0,1/\sqrt n)$, for all $k=0,\dots,2L$. But then $I^{(k)}$ extends continuously to $[0,1/\sqrt n]$ for all $k=0,\dots,2L$.
\end{proof}

The following is the main result of this section.
\begin{lemma}\label{lma:Taylor} For $k=0,1,\dots,2L$ and $(t,x)\in[0,1/\sqrt n]\times\U$ define 
\begin{align}
\chi_k(t,x) &= \sum_{\ell=0}^k{k\choose\ell}\partial_t^{k-\ell}F(t,x)\,B_\ell\l(-\partial_t\rr(t,x), -\partial_t^2\rr(t,x),\dots, -\partial_t^\ell\rr(t,x)\r),\label{chi-def}
\end{align} where $\partial_t^k\rr$ and $\partial_t^kF$ are as in~\eqref{partial-t} and~\eqref{partial-f}, respectively.
Then there exists $s\in(0, 1/\sqrt n)$ such that $$I(1/\sqrt n)= f(0) + \sum_{k=1}^{2L-1}n^{-k/2}A_k + \kappa_L+\maus{L},$$ where
\begin{align}
A_k &= \frac{1}{k!}\E\l[\chi_k(0, Z)\r],\label{eq:Ak}\\
\kappa_L &= \frac{n^{-L}}{(2L)!}\int\chi_{2L}(s,\cdot)e^{-\rr(s,\cdot)}\ind_\U d\gamma,\label{kap-def}\\
\maus{L}   &= -\sum_{k=0}^{2L-1}\frac{n^{-k/2}}{k!}\int\chi_k(0,\cdot)\ind_{\U^c}d\gamma.\label{tauL-def}
\end{align}
\end{lemma}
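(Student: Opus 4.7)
The plan is to apply the standard Taylor formula with Lagrange remainder to the scalar function $I(t)$ on the interval $[0,1/\sqrt n]$, evaluated at $t=1/\sqrt n$. By Corollary~\ref{corr:Ik}, $I$ is $2L$-times differentiable on $(0,1/\sqrt n)$ with all derivatives $I^{(k)}$, $k=0,\dots,2L$, extending continuously to $[0,1/\sqrt n]$. In particular $I$ is $C^{2L-1}$ on the closed interval and $I^{(2L)}$ exists on the open interval, which is enough to conclude that there is $s\in(0,1/\sqrt n)$ with
\begin{equation*}
I(1/\sqrt n) \;=\; \sum_{k=0}^{2L-1}\frac{I^{(k)}(0)}{k!}\,n^{-k/2} \;+\; \frac{I^{(2L)}(s)}{(2L)!}\,n^{-L}.
\end{equation*}

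The core of the proof is to rewrite $I^{(k)}(t)$ in the form needed for $A_k$ and $\kappa_L$. Using~\eqref{Ikt}, it suffices to compute $\partial_t^k\bigl(F(t,x)e^{-\rr(t,x)}\bigr)$. First I apply the general Leibniz product rule,
\begin{equation*}
\partial_t^k\!\bigl(Fe^{-\rr}\bigr) \;=\; \sum_{\ell=0}^k \binom{k}{\ell}\,\partial_t^{k-\ell}F\cdot\partial_t^{\ell}\!\bigl(e^{-\rr}\bigr),
\end{equation*}
and then Faa di Bruno's formula for the composition $t\mapsto e^{-\rr(t,x)}$. Since every derivative of $u\mapsto e^u$ is itself $e^u$, Faa di Bruno collapses to
\begin{equation*}
\partial_t^\ell\bigl(e^{-\rr}\bigr) \;=\; e^{-\rr}\,B_\ell\!\bigl(-\partial_t\rr,-\partial_t^2\rr,\dots,-\partial_t^\ell\rr\bigr),
\end{equation*}
where $B_\ell$ is the complete Bell polynomial of~\eqref{def:bell}. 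Combining the two gives the clean identity $\partial_t^k(Fe^{-\rr})=e^{-\rr}\chi_k(t,x)$ with $\chi_k$ exactly as in~\eqref{chi-def}, so that
\begin{equation*}
I^{(k)}(t) \;=\; \int_{\U}\chi_k(t,x)\,e^{-\rr(t,x)}\,\gamma(dx), \qquad 0\le k\le 2L.
\end{equation*}

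Now I specialize to $t=0$ and $t=s$. Since $\rr(0,x)=0$, the factor $e^{-\rr}$ drops out at $t=0$, leaving $I^{(k)}(0)=\int_{\U}\chi_k(0,\cdot)\,d\gamma$. To match the statement, I split this over $\U$ into a full Gaussian expectation minus the $\U^c$ tail:
\begin{equation*}
\int_{\U}\chi_k(0,\cdot)\,d\gamma \;=\; \E\bigl[\chi_k(0,Z)\bigr]\;-\;\int\chi_k(0,\cdot)\,\ind_{\U^c}\,d\gamma.
\end{equation*}
Assembling the $k=0,\dots,2L-1$ terms of the Taylor sum produces $f(0)+\sum_{k=1}^{2L-1}n^{-k/2}A_k$ (using that $\chi_0(0,x)=F(0,x)=f(0)$, hence $A_0=f(0)$) plus the tail sum which is exactly $\tau_L$ as in~\eqref{tauL-def}. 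The Lagrange remainder at $t=s$ stays on $\U$ and directly yields $\kappa_L$ as in~\eqref{kap-def}.

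The only non-routine point is justifying the applicability of Taylor's theorem with Lagrange remainder and the interchange of derivative and integral defining $I^{(k)}$; but both are already handled in Corollary~\ref{corr:Ik}, whose proof leverages the uniform continuity of $\partial_t^k F$ and $\partial_t^k\rr$ on $(0,1/\sqrt n)\times\U$ established in Lemma~\ref{lma:W:Ck} and~\eqref{partial-f}. Hence the only real bookkeeping to watch is getting the signs and binomial weights right inside $\chi_k$, which Faa di Bruno's formula delivers cleanly.
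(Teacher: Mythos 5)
Your proposal is correct and follows the paper's proof essentially step for step: Taylor's theorem with Lagrange remainder applied to $I$ via Corollary~\ref{corr:Ik}, the Leibniz rule combined with Faa di Bruno's formula (Bell polynomials) to write $\partial_t^k(Fe^{-\rr})=\chi_k e^{-\rr}$, specialization at $t=0$ using $\rr(0,\cdot)\equiv0$, and the split $\int_\U=\int_{\R^d}-\int_{\U^c}$ to separate $A_k$ from $\tau_L$. Nothing substantive differs from the paper's argument.
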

\begin{remark}Note that the $A_k$ involve an integral of $\chi_k(0,x)$ over all $x\in\R^d$, while $\maus{L}$ involves an integral of $\chi_k(0,x)$ over $x\in\U^c$. Technically, $\chi_k(0,x)$ has only been defined for $x\in\U$. However, note that, using the formulas~\eqref{partial-0} and~\eqref{partial-f} for $\partial_t^\rr(0,x)$ and $\partial_t^kF(0,x)$, respectively, we get
\beq\label{chik0}
\chi_k(0,x) = \sum_{\ell=0}^k{k\choose\ell}\la\nabla^{k-\ell}f(0), x^{\otimes k-\ell}\ra\,B_\ell\l(\frac{-\la\nabla^{3}v(0), x^{\otimes 3}\ra}{2\cdot 3}, \dots,\frac{-\la\nabla^{\ell+2}v(0), x^{\otimes \ell+2}\ra}{(\ell+1)(\ell+2)}\r)
\eeq This is simply a polynomial in $x$ which is well-defined on $\R^d$, so $\chi_k(0,x)$ can be extended to $\R^d$.
\end{remark}

Using this lemma together with~\eqref{main-ratio} we arrive at the following decomposition:
\beqs\label{overall-decomp}
\frac{e^{n\u(\xmin)}\sqrt{\det H}}{(2\pi/n)^{d/2}}\int_{\R^d} g(x)e^{-n\u(x)}dx &= I(1/\sqrt n)+\maus{\U^c} \\
&= f(0) + \sum_{k=1}^{2L-1}n^{-k/2}A_k + \kappa_L+\maus{L} +\maus{\U^c}.
\eeqs 
Note that the expression $A_{2k}=\frac{1}{(2k)!}\E[\chi_{2k}(0,Z)]$, with $\chi_{2k}(0,\cdot)$ as in~\eqref{chik0}, is precisely the same as in~\eqref{A-expect} of Theorem~\ref{thm:expand}. Thus to finish the proof of Theorem~\ref{thm:expand}, it remains to show that $A_{2k+1}=0$ for all $k$, and to bound the coefficients $A_{2k}$ and remainder terms $\kappa_L,\maus{L},\maus{\U^c}$. We do the former in Section~\ref{sec:lead}, where we also derive the explicit formula for the $A_{2k}$. We bound the coefficients and remainder in Section~\ref{sec:rem}.
\begin{proof}
Corollary~\ref{corr:Ik} shows that Taylor's theorem can be applied to the function $I(t)$ defined in~\eqref{I-def}. Specifically, for some $s\in(0,n^{-1/2})$, we have
\beqs\label{Taylor-I} 
I(1/\sqrt n) =& \sum_{k=0}^{2L-1}\frac{n^{-k/2}}{k!}I^{(k)}(0) + \frac{n^{-L}}{(2L)!}I^{(2L)}(s)\\
=&\sum_{k=0}^{2L-1}\frac{n^{-k/2}}{k!}\int_\U\partial_t^k\l(F(t, x)\,e^{-\rr(t, x)}\r)\bigg\vert_{t=0}\gamma(dx)\\
& +  \frac{n^{-L}}{(2L)!}\int_\U\partial_t^{2L}\l(F(t, x)\,e^{-\rr(t, x)}\r)\bigg\vert_{t=s}\gamma(dx).\eeqs
To compute $\partial_t^k(Fe^{-\rr})$, we use the product rule and the following formula for the derivatives of the exponential of a function. Namely, if $h\in C^k(\R)$, then
\beq
\frac{d^k}{dt^k}e^{h(t)} = e^{h(t)}B_k\l(h'(t), h''(t),\dots, h^{(k)}(t)\r),
\eeq where $B_k$ is the $k$th complete Bell polynomial defined in~\eqref{def:bell}. See Chapter 2 in~\cite{combinatoricsbook}. This result follows from Faa di Bruno's formula on the derivatives of $f\circ h$, with $f(x)=e^x$. See~\cite{combinatoricsbook, comtet2012advanced} for more on this topic. 
Thus
\beqs\label{partial-tk}
\partial_t^ke^{-\rr(t,x)} &= e^{-\rr(t,x)}B_k\l(-\partial_t\rr(t,x), -\partial_t^2\rr(t,x),\dots, -\partial_t^k\rr(t,x)\r)\eeqs for all $x\in\U$ and $k=1,\dots,2L$. We now have
\beq\label{partial-Fr}
\partial_t^k\l(F(t,x)e^{-\rr(t,x)}\r) = \sum_{\ell=0}^k{k\choose\ell}\partial_t^{k-\ell}F(t,x)\partial_t^\ell e^{-\rr(t,x)} = \chi_k(t,x)e^{-\rr(t,x)},
\eeq where the first equality is by the product rule and in the second equality, we substituted~\eqref{partial-tk}. Using~\eqref{partial-Fr} in~\eqref{Taylor-I} and recalling from~\eqref{r-def} that $\rr(0,\cdot)\equiv0$ finally gives 
\beqs\label{Taylor-III} I(1/\sqrt n) = \sum_{k=0}^{2L-1}\frac{n^{-k/2}}{k!}\int_\U\chi_k(0,x)\gamma(dx)+  \frac{n^{-L}}{(2L)!}\int_\U\chi_{2L}(s,x)e^{-\rr(s,x)}\gamma(dx).\eeqs
To conclude, we write $\int_{\U}=\int_{\R^d} - \int_{\U^c}$ for the integrals in the sum over $k$, and recognize that $\int_{\R^d}\chi_k(0,\cdot)d\gamma=\E[\chi_k(0,Z)]$. Note in particular that $\chi_0(t,x)=F(t,x)$, so that $\chi_0(0,x)=f(0)$.\end{proof}

\subsection{Step 2: bounds on terms and remainder}\label{sec:rem}
In this section, we prove the bounds on $A_{2k},\kappa_L,\maus{L}$. The bound on $\maus{\U^c}$ follows from~\eqref{v-av-growth-ii} and a gamma integral calculation; see Lemma~\ref{aux:gamma}. 
First we have the following preliminary bounds on $A_{2k}$, $\kappa_L$, and $\maus{L}$, which follow from the definitions~\eqref{eq:Ak}, \eqref{kap-def}, \eqref{tauL-def}, and from Cauchy-Schwarz:
\begingroup
\addtolength{\jot}{0.5em}
\beqs\label{alltogeth}
n^{-k}|A_{2k}| &\leq n^{-k}\|\chi_{2k}(0,\cdot)\|_1,\\
|\kappa_L| &\leq \sup_{t\in[0,1/\sqrt n]}n^{-L}\l \|\chi_{2L}(t,\cdot)\ind_\U\r \|_2 \,\l \|e^{-\rr(t,\cdot)}\ind_\U\r\|_2,\\
|\maus{L}| &\les_L \max_{0\leq k\leq 2L-1}n^{-k/2}\l \|\chi_k(0,\cdot)\r \|_2\sqrt{\gamma(\U^c)}.
\eeqs
\endgroup
Recall that $\gamma$ is the standard Gaussian distribution, and $\U=\{\|x\|\geq\Rad\sqrt d\}$. Here and below, we use the notation 
$$\|f\|_q:=\l(\int |f|^qd\gamma\r)^{1/q}.$$
For $\gamma(\U^c)$ we use the standard Gaussian tail bound $\gamma(\U^c) \leq \exp\l(-(\Rad-1)^2d/2\r)$, which follows e.g. from Example 2.12 of~\cite{boucheron2013concentration} (recalling that $\Rad>1$). There are essentially only two other distinct quantities arising in the righthand sides of~\eqref{alltogeth}: 
$$n^{-k/2}\|\chi_k(t,\cdot)\ind_\Omega\|_2,\qquad \|e^{-\rr(t,\cdot)}\ind_\U\|_2,$$ where $\Omega$ is either $\U$ or $\R^d$. These quantities are bounded in Propositions~\ref{prop:exp} and~\ref{corr:B-Lp} in the following two subsections, respectively. At the heart of these two propositions are the two key bounds discussed in the proof outline in Section~\ref{sec:outline}. The former proposition uses~\eqref{intro:lip2} and the latter uses~\eqref{intro:lip1}.

Combining the bounds from these propositions with the Gaussian tail bound and~\eqref{alltogeth} finishes the proof of the bounds~\eqref{leading-bound},~\eqref{rem-kap},~\eqref{rem-tauL} on $A_{2k},\kappa_L,\maus{L}$, respectively.
\subsubsection{Exponential bound}
\begin{prop}\label{prop:exp} It holds
\beq \label{eq:exp}
\sup_{t\in(0,1/\sqrt n)}\|e^{-\rr(t,\cdot)}\ind_\U \|_{2}\leq \exp\l(\l[\Rad^4c_3(\Rad)^2+c_4(\Rad)\r]\epsilon^2\r).
\eeq
 \end{prop}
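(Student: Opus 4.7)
The plan is to apply the Gaussian concentration inequality \eqref{intro:lip2} with $q=2$ and $\phi(x) = -\rr(t,x)$. This gives
\[
\|e^{-\rr(t,\cdot)}\ind_\U\|_2 \;\le\; \exp\!\l(-\E[\rr(t,Z)\ind_\U(Z)] \;+\; \sup_{x\in\U}\|\nabla_x \rr(t,x)\|^2\r),
\]
so it suffices to show the mean contributes at most $\les c_4(\Rad)\epsilon^2$ and the Lipschitz term contributes at most $\les \Rad^4 c_3(\Rad)^2\epsilon^2$, uniformly in $t\in(0,1/\sqrt n)$.

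For the mean, I would start from the representation \eqref{rr-alt} and further expand $\nabla^3 v(qtx)$ about the origin: writing $\nabla^3 v(qtx) - \nabla^3 v(0) = qt\int_0^1 \nabla^4 v(sqtx)\cdot x\, ds$ and contracting against $x^{\otimes 3}$, one obtains
\[
\rr(t,x) \;=\; \tfrac{t}{6}\la\nabla^3 v(0),x^{\otimes 3}\ra \;+\; \tfrac{t^2}{2}\int_0^1\!\!\int_0^1 q(1-q)^2\,\la\nabla^4 v(sqtx),x^{\otimes 4}\ra\,ds\,dq.
\]
The cubic-in-$x$ piece integrates to zero against $\gamma\,\ind_\U$ because $Z\mapsto -Z$ preserves both the Gaussian and the ball-indicator $\ind_\U$, while $\la\nabla^3 v(0),Z^{\otimes 3}\ra$ is odd. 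The quartic remainder term is bounded uniformly on $\U$ by $c_4(\Rad)\|x\|^4$ since $\|sqtx\|\le\Rad\sqrt{d/n}$ for $x\in\U$ and $|t|\le 1/\sqrt n$. Using $\E\|Z\|^4 = d(d+2)\le 3d^2$ and the beta-integral $\int_0^1 q(1-q)^2 dq = 1/12$, together with $t^2\le 1/n$, one gets $|\E[\rr(t,Z)\ind_\U(Z)]|\le c_4(\Rad)\epsilon^2/8$.

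For the Lipschitz term, I would compute $\nabla_x\rr(t,x) = t^{-1}(\nabla v(tx)-tx)$ (using $\nabla^2 v(0)=I_d$, $\nabla v(0)=0$) and then Taylor expand $\nabla v(tx)$ in $t$ about $0$ to second order, which yields
\[
\nabla_x\rr(t,x) \;=\; t\int_0^1 (1-u)\,\nabla^3 v(tux)[x,x,\cdot]\,du.
\]
Applying the symmetric-tensor operator norm bound $\|\nabla^3 v(tux)[x,x,\cdot]\|\le \|\nabla^3 v(tux)\|\|x\|^2\le c_3(\Rad)\|x\|^2$ (valid since $\|tux\|\le\Rad\sqrt{d/n}$) and $\|x\|^2\le\Rad^2 d$ on $\U$, I obtain $\|\nabla_x\rr(t,x)\|\le t c_3(\Rad)\Rad^2 d/2$, hence $\sup_{x\in\U}\|\nabla_x\rr(t,x)\|^2\le \Rad^4 c_3(\Rad)^2\epsilon^2/4$. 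Combining these two bounds inside the exponential and absorbing the factors $1/8$ and $1/4$ into the sum yields the claimed estimate.

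The only genuinely delicate step is the symmetry argument eliminating the cubic-in-$Z$ term: without it, the mean would contribute a term of order $c_3(0)\cdot\E\|Z\|^3/\sqrt n$, which scales as $d^{3/2}/\sqrt n$ and would destroy the $\epsilon^2$ rate. Everything else is bookkeeping on integrals of $(1-q)^2,q(1-q)^2$ and on moments of $\|Z\|$.
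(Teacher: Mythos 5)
Your proof is correct and follows essentially the same route as the paper: apply the Gaussian concentration inequality with $q=2$, Taylor-expand $\rr$ and $\nabla_x\rr$ about $x=0$, kill the cubic term in the mean by symmetry of $\gamma\ind_\U$, and bound the quartic remainder and the gradient via $c_4(\Rad)$ and $c_3(\Rad)$ respectively. One small note: the rigorous statement behind \eqref{intro:lip2}, namely Corollary~\ref{corr:gauss}, carries the mean term as $\gamma(\U)^{-1}\int_\U(-\rr)\,d\gamma$ rather than $\int_\U(-\rr)\,d\gamma$; this extra factor $\gamma(\U)^{-1}\leq 2$ should be kept, but since your constant on the mean contribution was $1/8$ it is harmlessly absorbed and the stated inequality still holds.
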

\begin{proof}
We use the key estimate~\eqref{corr:exp:gauss} of Corollary~\ref{corr:gauss}, which was presented in simplified form as the key bound~\eqref{intro:lip2} in the proof outline:
\beq\label{keyexpapply}
\|e^{-\rr(t,\cdot)}\ind_\U \|_{2} \leq\exp\l(-\gamma(\U)^{-1}\int_\U \rr(t,\cdot)d\gamma + \sup_{x\in\U}\|\nabla_x\rr(t,x)\|^2\r)
\eeq
Now, for $t\neq0$ we have $\rr(t,x) = v(tx)/t^2 - \|x\|^2/2$ and therefore $\nabla_x\rr(t,x)=\nabla v(tx)/t - x$. Taylor expanding $\rr$ and $\nabla_x\rr$ about $x=0$ for fixed $t>0$ gives
\begin{align}\label{texp-r}
\rr(t,x)& =\frac{t}{3!}\la\nabla^3v(0), x^{\otimes3}\ra + \frac{t^2}{4!}\la\nabla^4v(\xi tx), x^{\otimes4}\ra,\\
\nabla_x \rr(t,x) &= \frac t2\la\nabla^3v(\xi tx), x^{\otimes2}\ra,\label{texp-nablar}\end{align} for some $\xi\in[0,1]$. From~\eqref{texp-r} we get that 
\beqsn
\gamma(\U)^{-1}\l|\int\rr(t,\cdot)d\gamma\r| &\leq \gamma(\U)^{-1}\frac{t^2}{4!}\int_\U \l|\la\nabla^4v(\xi tx), x^{\otimes4}\ra\r| \gamma(dx) \\
&\leq \frac{1}{24n}c_4(\Rad)\gamma(\U)^{-1}\E[\|Z\|^4] \leq c_4(\Rad)\frac{d^2}{n}\eeqsn for all $t\in(0,1/\sqrt n)$. Here we used that $\int_\U\la\nabla^3v(0), x^{\otimes3}\ra \gamma(dx)=0$ and that $\gamma(\U)\geq1/2$ by the Gaussian tail bound discussed above (since $\Rad\geq4$). From~\eqref{texp-nablar} we get that
$$\sup_{x\in\U}\|\nabla_x\rr(t,x)\|^2 \leq \l(\frac{1}{\sqrt n}c_3(\Rad)(\Rad \sqrt d)^2\r)^2 = c_3(\Rad)^2\frac{\Rad^4d^2}{n}$$  for all $t\in(0,1/\sqrt n)$. Combining the above two estimates in~\eqref{keyexpapply} concludes the proof.
\end{proof}

\subsubsection{Bound on $\chi_k$}In this section, we prove the following main proposition.
\begin{prop}\label{corr:B-Lp}
It holds
\begin{align}
\label{Bk0-bd}
n^{-k/2}\|\chi_k(0,\cdot)\|_2 &\les_{k} \A_k(0)\epsilon^k,\qquad\forall\;1\leq k\leq 2L,\\
\label{delta-k-Bk-bd}
\sup_{0\leq t\leq 1/\sqrt n}n^{-L}\|\chi_{2L}(t,\cdot)\ind_{\U}\|_2 &\les_{L}  \A_{2L}(\Rad)\epsilon^{2L}.
\end{align} 
\end{prop}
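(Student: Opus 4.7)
My plan is to expand both $\chi_k(0,\cdot)$ and $\chi_{2L}(t,\cdot)\ind_\U$ as sums of monomials in tensor Gaussian chaoses via the explicit form of the Bell polynomials, then use Hölder's inequality to reduce the $L^2(\gamma)$-norm of each monomial to a product of single-chaos moments, and finally apply the key bound~\eqref{intro:lip1}, which saves one power of $\sqrt d$ in odd orders.

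For~\eqref{Bk0-bd}, the explicit formula~\eqref{chik0} for $\chi_k(0,x)$ together with~\eqref{def:bell} expresses $\chi_k(0,x)$ as a sum, over $\ell=0,\dots,k$ and over monomials $\prod_{i=1}^\ell s_i^{j_i}$ with $s_i=-\la\nabla^{i+2}v(0),x^{\otimes(i+2)}\ra/((i+1)(i+2))$ and $\sum_i i j_i=\ell$, of products $\la\nabla^{k-\ell}f(0),x^{\otimes(k-\ell)}\ra\prod_i s_i^{j_i}$. Taking $L^2(\gamma)$-norms, I would apply Hölder's inequality with equal moment exponents to reduce to controlling $\|\la\nabla^m v(0),Z^{\otimes m}\ra\|_{L^{2J}}$ and $\|\la\nabla^{k-\ell}f(0),Z^{\otimes(k-\ell)}\ra\|_{L^{2J}}$, where the total order $J$ depends only on $k$. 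By~\eqref{intro:lip1} for odd $m$ and the operator-norm bound $|\la T,Z^{\otimes m}\ra|\leq\|T\|\|Z\|^m$ for even $m$, each factor contributes $c_m(0)\sqrt d^{m-1}$ or $c_m(0)\sqrt d^m$, respectively. Rewriting in terms of $\alpha_{m-2}(0)$ and $\alpha_{m,g}(0)$ from~\eqref{alpha0} (designed precisely so that the parity-dependent powers of $\sqrt d$ absorb uniformly), each factor becomes $\alpha_\bullet(0)\cdot d^m$. Using $\sum i j_i=\ell$, each monomial therefore contributes $\alpha_{k-\ell,g}(0)\prod_i\alpha_i(0)^{j_i}\cdot d^k$; pulling out $n^{-k/2}d^k=\epsilon^k$ and resumming via the definition of $B_\ell$ yields $\A_k(0)\epsilon^k$.

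The bound~\eqref{delta-k-Bk-bd} at general $t\in[0,1/\sqrt n]$ follows the same blueprint, but the ``base'' factors in $\chi_{2L}(t,x)$, namely $\partial_t^m F(t,x)=\la\nabla^m f(tx),x^{\otimes m}\ra$ and $\partial_t^m \rr(t,x)=\int_0^1\la\nabla^{m+2}v(qtx),x^{\otimes(m+2)}\ra q^m(1-q)\,dq$, involve derivatives evaluated at nonzero points. For $x\in\U$ and $t\le 1/\sqrt n$ we have $\|qtx\|\le\Rad\sqrt{d/n}$, so one can directly use the operator-norm bounds $c_{m+2}(\Rad)$ and $c_{m,g}(\Rad)$, yielding the ``crude'' term $\sqrt d\, c_{m+2}(\Rad)$ in the $\min$ defining $\alpha_m(\Rad)$. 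To recover the $\sqrt d$ saving for odd orders, I would Taylor-split
\[
\nabla^{m+2}v(qtx)=\nabla^{m+2}v(0)+\int_0^1\nabla^{m+3}v(\xi qtx)[qtx]\,d\xi,
\]
so that the first summand is a constant tensor to which~\eqref{intro:lip1} applies, yielding an $L^{2J}$-contribution $\les c_{m+2}(0)\sqrt d^{m+1}$, while the second is bounded pointwise by $c_{m+3}(\Rad)\cdot qt\|x\|^{m+3}$, whose $L^{2J}$-norm over $\U$ is $\les c_{m+3}(\Rad)\epsilon\sqrt d^{m+1}$; summed, this matches $\bar c_{m+2}(\Rad)\sqrt d^{m+1}$, the other term in the $\min$. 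The $f$-factors are handled analogously using $\alpha_{m,g}(\Rad)$. With these factor-by-factor bounds in place, the parity-insensitive $d^{2L}$ bookkeeping proceeds exactly as in the $t=0$ case; $n^{-L}d^{2L}=\epsilon^{2L}$ factors out, and resummation gives $\A_{2L}(\Rad)\epsilon^{2L}$.

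The main obstacle is the Taylor-splitting step together with the verification that the two alternative bounds (``split'' giving $\bar c$, ``crude'' giving $\sqrt d\, c$) interact cleanly through the $\min$ in Definition~\ref{def:alph} across every parity pattern of $\ell, j_1,\dots,j_\ell$; once this is in place, the algebra of $\sqrt d$ powers reassembles uniformly into $d^{2L}$ times $\A_{2L}(\Rad)$, completing the proof.
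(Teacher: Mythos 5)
Your plan is correct and matches the paper's proof in all essential respects: Cauchy--Schwarz plus a Bell-polynomial H\"older expansion (the paper packages this as Lemma~\ref{lma:Bell:Lp}), then~\eqref{intro:lip1} to pick up the $\sqrt d$ saving on each odd-order chaos factor, with the $\alpha$-weights absorbing the parity-dependent $\sqrt d$ powers to give $d^k$ uniformly. Your Taylor-split of $\nabla^{m+2}v(qtx)$ around the origin is equivalent to the paper's Taylor expansion of $\partial_t^m\rr(t,x)$ in $t$ (see Lemma~\ref{lma:partial-t-r-bounds}); both yield $\partial_t^m\rr(t,x)=\partial_t^m\rr(0,x)+t\cdot E$ with $|E|\les c_{m+3}(\Rad)\|x\|^{m+3}$, hence the $\bar c$ vs.\ $\sqrt d\,c$ alternatives captured by the $\min$ in Definition~\ref{def:alph}.
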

To prove this proposition we start with the following inequality, recalling the definition of $\chi_k$ from~\eqref{chi-def}:
\beqs\label{chik-prelim}
n^{-k/2}\|\chi_k\ind_\Omega\|_2 &\les_k \sum_{\ell=0}^kn^{-\ell/2}\l\|B_\ell\l(-\partial_t\rr, \dots, -\partial_t^\ell\rr\r)\ind_\Omega\r\|_4n^{-(k-\ell)/2}\|\partial_t^{k-\ell}F\ind_\Omega\|_4\\
&\les_k\sum_{\ell=0}^kB_\ell\l(n^{-1/2}\l \|\partial_t\rr\ind_\Omega\r \|_{4\ell^2}\,,\, \dots, \,n^{-\ell/2} \|\partial_t^\ell\rr\ind_\Omega \|_{4\ell^2}\r)n^{-(k-\ell)/2}\l \|\partial_t^{k-\ell}F\ind_\Omega\r \|_4.
\eeqs
Above, we have used the shorthand $\|u\ind_\Omega\|_q = \|u(t,\cdot)\ind_\Omega\|_q$. The first upper bound is by Cauchy-Schwarz, and the second upper bound is proved in Lemma~\ref{lma:Bell:Lp}, using the structure of the Bell polynomials. Thus it remains to bound the quantities $\|\partial_t^k\rr\ind_\Omega\|_q$ and $\|\partial_t^kF\ind_\Omega\|_q$ for some $q\geq2$. Specifically, we are interested in $\|\partial_t^k\rr(0,\cdot)\|_q$ and $\|\partial_t^k\rr(t,\cdot)\ind_\U\|_q$, and similarly for $F$.

At this point is useful to recall the formulas~\eqref{partial-0} and~\eqref{partial-t} for $\partial_t^k\rr(0,\cdot)$ and $\partial_t^k\rr(t,\cdot)$, respectively, the formula~\eqref{partial-f} for $\partial_t^kF(t,\cdot)$, and Definition~\ref{def:alph} of the $\alpha$'s.
\begin{lemma}\label{lma:partial-t-r-bounds}
For $\rr$, we have the following bounds for all $k=1,2,3,\dots$:
\begin{align}
\|\partial_t^k\rr(0,\cdot)\|_q &\les_{k,q} \alpha_k(0)d^k=
\begin{cases}
c_{k+2}(0){\sqrt d}^{k+2},&\text{$k$ even},\\
c_{k+2}(0){\sqrt d}^{k+1},&\text{$k$ odd},
\end{cases},\label{partial-r-0}\\
\sup_{t\in(0,1/\sqrt n)}\|\partial_t^k\rr(t,\cdot)\ind_\U\|_q &\les_{k,q} \alpha_k(\Rad)d^k=
\begin{cases}
c_{k+2}(\Rad){\sqrt d}^{k+2},&\text{$k$ even},\\
\bar c_{k+2}(\Rad){\sqrt d}^{k+1},&\text{$k$ odd.}
\end{cases}\label{partial-r-R}
\end{align} For $F$, we have the following bounds for all $k=0,1,2,\dots$
\begin{align}
\|\partial_t^kF(0,\cdot)\|_q &\les_{k,q} \alpha_{k,g}(0)d^k=
\begin{cases}
c_{k,g}(0){\sqrt d}^{k},&\text{$k$ even},\\
c_{k,g}(0){\sqrt d}^{k-1},&\text{$k$ odd},
\end{cases}\label{partial-F-0}\\
\sup_{t\in(0,1/\sqrt n)}\|\partial_t^kF(t,\cdot)\ind_\U\|_q &\les_{k,q}\alpha_{k,g}(\Rad)d^k=
\begin{cases}
c_{k,g}(\Rad){\sqrt d}^{k},&\text{$k$ even},\\
\bar c_{k,g}(\Rad){\sqrt d}^{k-1},&\text{$k$ odd.}
\end{cases}\label{partial-F-R}
\end{align}
\end{lemma}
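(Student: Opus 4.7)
}
The plan is to treat all four bounds uniformly using just two tools: the trivial operator-norm inequality $|\la T, Z^{\otimes m}\ra|\leq \|T\|\,\|Z\|^m$, which upon taking $L^q$ norms costs $\sqrt d^{m}$, and the improved Gaussian polynomial bound~\eqref{intro:lip1}, valid only for odd $m$, which costs $\sqrt d^{m-1}$. The parity dichotomy in the statement reflects exactly when each tool applies: every bound of the form ``${\sqrt d}^{k+2}$" or ``${\sqrt d}^{k}$" will come from the trivial bound, while every bound of the form ``${\sqrt d}^{k+1}$" or ``${\sqrt d}^{k-1}$" will come from~\eqref{intro:lip1}.

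\paragraph{Bounds at $t=0$.}
For~\eqref{partial-r-0} and~\eqref{partial-F-0} I use the explicit formulas
\[
\partial_t^k \rr(0,x) \;=\; \frac{\la \nabla^{k+2}v(0),\, x^{\otimes k+2}\ra}{(k+1)(k+2)},
\qquad
\partial_t^k F(0,x) \;=\; \la \nabla^k f(0),\, x^{\otimes k}\ra,
\]
from~\eqref{partial-0} and~\eqref{partial-f}. When the tensor order ($k+2$ or $k$) is even, applying the trivial operator-norm bound and the Gaussian moment estimate $\E[\|Z\|^{qm}]^{1/q}\les_{m,q}\sqrt d^{m}$ produces the claimed bounds. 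When the tensor order is odd, applying~\eqref{intro:lip1} instead saves one factor of $\sqrt d$ and yields the tighter bound.

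\paragraph{Bounds on $\U$ for $t\in(0,1/\sqrt n)$.}
For the general statement in~\eqref{partial-r-R} (and its analogue in~\eqref{partial-F-R}) I use the integral representation~\eqref{partial-t}. On $\U$, any point of the form $sqtx$ with $s,q\in[0,1]$, $|t|\leq 1/\sqrt n$, $\|x\|\leq\Rad\sqrt d$ satisfies $\|sqtx\|\leq \Rad\sqrt{d/n}$, so that $\|\nabla^{k+2}v(sqtx)\|\leq c_{k+2}(\Rad)$. The pointwise bound $|\partial_t^k\rr(t,x)|\les_k c_{k+2}(\Rad)\|x\|^{k+2}$ and the trivial Gaussian moment bound then give $c_{k+2}(\Rad)\sqrt d^{k+2}$.

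To obtain the sharper bound in~\eqref{partial-r-R} for odd $k$, I Taylor expand the tensor $\nabla^{k+2}v$ (rather than the function $\rr$) to first order around $0$:
\[
\la \nabla^{k+2}v(qtx),\, x^{\otimes k+2}\ra
\;=\;  \la \nabla^{k+2}v(0),\, x^{\otimes k+2}\ra
\;+\; qt\int_0^1 \la \nabla^{k+3}v(sqtx),\, x^{\otimes k+3}\ra\, ds.
\]
Plugging this into~\eqref{partial-t} splits $\partial_t^k\rr(t,x)$ into a ``main'' piece proportional to $\la \nabla^{k+2}v(0),x^{\otimes k+2}\ra$ (an odd-order tensor inner product, exactly the setting of~\eqref{intro:lip1}) and a remainder of the form $t\cdot R(t,x)$ where $|R(t,x)|\les_k c_{k+3}(\Rad)\|x\|^{k+3}$ on $\U$. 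The $L^q$ norm of the main piece is $\les_{k,q} c_{k+2}(0)\sqrt d^{k+1}$ by~\eqref{intro:lip1}, and the $L^q$ norm of the remainder is at most $(1/\sqrt n)\,c_{k+3}(\Rad)\sqrt d^{k+3} = \epsilon\, c_{k+3}(\Rad)\sqrt d^{k+1}$. Summing reproduces the coefficient $\bar c_{k+2}(\Rad)=c_{k+2}(0)+\epsilon c_{k+3}(\Rad)$ prescribed by Definition~\ref{def:cvf}. The proof of the odd case of~\eqref{partial-F-R} proceeds identically, using the expansion $\nabla^k f(tx)=\nabla^k f(0) + t\int_0^1 \la\nabla^{k+1}f(stx),x\ra ds$ to isolate an odd-order main piece and a remainder controlled by $\epsilon c_{k+1,g}(\Rad)\sqrt d^{k-1}$.

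\paragraph{Main obstacle.}
There is no deep difficulty; the calculation is essentially bookkeeping. The only delicate point is to Taylor expand the \emph{derivative tensor} (not $\rr$ or $F$ themselves) around $0$ so that (i) the leading coefficient is evaluated at $\xmin$ rather than at a generic point of $\U$, thereby giving $c_{k+2}(0)$ instead of $c_{k+2}(\Rad)$, and (ii) the leading term is an odd-order Gaussian polynomial, so that~\eqref{intro:lip1} actually applies. Verifying that one additional derivative of $v$ (resp.\ $f$) combined with the prefactor $|t|\leq 1/\sqrt n$ converts into exactly one extra factor of $\sqrt d$ and one factor of $\epsilon=d/\sqrt n$ is what produces the quantities $\bar c_{k+2}(\Rad)$ and $\bar c_{k,g}(\Rad)$ cleanly.
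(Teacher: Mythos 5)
Your proof is correct and follows essentially the same route as the paper: compute the $t=0$ derivatives explicitly, apply the trivial moment bound when the tensor order is even and the sharper odd-order bound~\eqref{intro:lip1} otherwise, and for the odd-$k$ case at $t>0$ split off the main piece evaluated at $0$ plus a remainder that contributes $\epsilon c_{k+3}(\Rad)\sqrt{d}^{k+1}$ (resp. $\epsilon c_{k+1,g}(\Rad)\sqrt{d}^{k-1}$), reproducing $\bar c_{k+2}(\Rad)$ (resp. $\bar c_{k,g}(\Rad)$). The only cosmetic difference is where you Taylor expand: you expand the tensor $\nabla^{k+2}v(qtx)$ inside the $q$-integral of~\eqref{partial-t}, whereas the paper simply applies Taylor's theorem directly to the map $t\mapsto\partial_t^k\rr(t,x)$, writing $|\partial_t^k\rr(t,x)|\leq|\partial_t^k\rr(0,x)|+t|\partial_t^{k+1}\rr(\xi,x)|$ and then bounding $\partial_t^{k+1}\rr$ via~\eqref{partial-t}; these are equivalent calculations.
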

\begin{remark}[Redefinition of the $\alpha$'s]\label{rk:alpha} In the below proof, we show that the bound for even $k$ in~\eqref{partial-r-R} also holds for odd $k$. Therefore, for odd $k$, we can get a tighter overall bound by taking the minimum of $c_{k+2}(\Rad){\sqrt d}^{k+2}$ and $\bar c_{k+2}(\Rad){\sqrt d}^{k+1}$. Namely, it holds
\beq\label{newalph}
\sup_{t\in(0,1/\sqrt n)}\|\partial_t^k\rr(t,\cdot)\ind_\U\|_q \les_{k,q} \min\left(c_{k+2}(\Rad)\sqrt d, \bar c_{k+2}(\Rad)\right){\sqrt d}^{k+1}
\eeq
for $k$ odd. Similarly, for the bounds on the derivatives of $F$, we actually have
\beq\label{newalphg}
\sup_{t\in(0,1/\sqrt n)}\|\partial_t^kF(t,\cdot)\ind_\U\|_q \les_{k,q} \min\left(c_{k,g}(\Rad)\sqrt d, \bar c_{k,g}(\Rad)\right){\sqrt d}^{k-1}
\eeq when $k$ is odd.
A consequence of this is that we can redefine $\alpha_k(\Rad)$ to be such that $\alpha_k(\Rad)d^k$ is equal to the righthand side of~\eqref{newalph}, matching the structure of the result in~\eqref{partial-r-R}. Similarly, we can redefine $\alpha_{k,g}(\Rad)$ to be such that $\alpha_{k,g}(\Rad)d^k$ is equal to the righthand side of~\eqref{newalphg}. Our main Theorem~\ref{thm:expand} will continue to be valid with this alternative definition of the $\alpha_k$ and $\alpha_{k,g}$ for $k$ odd. Explicitly, the new definition is as follows:
\begingroup
\addtolength{\jot}{1em}
\beqs
\alpha_{k}(r)&=d^{(1-k)/2}\min\l(\bar c_{k+2}(r), \sqrt dc_{k+2}(r)\r), \qquad k=3,5,7,\dots\\
\alpha_{k,g}(r) &= d^{-(1+k)/2}\min\l(\bar c_{k,g}(r), \sqrt dc_{k,g}(r)\r), \qquad k=1,3,5,\dots.
\eeqs \endgroup Note that $(1-k)/2 = 1-\lceil k/2\rceil$ and $-(1+k)/2 = -\lceil k/2\rceil$ for $k$ odd, so the power of $d$ downweighting is the same above as in Definition~\ref{def:alph}. The difference is that we have replaced $\bar c_{k+2}(r)$ by $\min(\bar c_{k+2}(r),\sqrt dc_{k+2}(r))$. For $r=0$, the new definition coincides with the old one since $\bar c_k(0)=c_k(0)$, and therefore the minimum of $\bar c_k(0)$ and $\sqrt dc_k(0)$ is $\bar c_k(0)=c_k(0)$.\end{remark}
\begin{proof}
Using~\eqref{partial-0} we see that for even $q$ the $\|\partial_t^k\rr(0,\cdot)\|_q$ are bounded as follows:
\beqsn
\|\partial_t^k\rr(0,\cdot)\|_q \leq \E\l[\lla\nabla^{k+2}v(0), Z^{\otimes k+2}\rra^q\r]^{\frac1q}.
\eeqsn We can now use the key result~\eqref{intro:lip1} (proved in Proposition~\ref{lma:pk-bds}, along with a bound in the case of even $k$) to conclude~\eqref{partial-r-0}. To bound $\|\partial_t^k\rr(t,\cdot)\ind_\U\|_q$, we proceed as follows. First, we have the following bound using~\eqref{partial-t}:
$$
|\partial_t^k\rr(t,x)|\les_k c_{k+2}(\Rad)\|x\|^{k+2},\qquad\forall x\in\U,\; t\in(0,1/\sqrt n).
$$ This holds for all $k$. For odd $k$, in order to take advantage of the refined bound on $\|\partial_t^k\rr(0,\cdot)\|_q$, we Taylor expand further in $t$:
\beqsn
|\partial_t^k\rr(t,x)| &\leq|\partial_t^k\rr(0,x)|+t|\partial_t^{k+1}\rr(\xi, x)|\\
&\leq|\partial_t^k\rr(0,x)|+n^{-1/2}c_{k+3}(\Rad)\|x\|^{k+3}\qquad\forall x\in\U,\,t\in(0,1/\sqrt n).\eeqsn Combining all of the above estimates, including~\eqref{partial-r-0}, we obtain
\beq
\|\partial_t^k\rr(t,\cdot)\ind_\U\|_q \les_{k,q}
\begin{cases}
c_{k+2}(\Rad){\sqrt d}^{k+2},&k=1,2,3,\dots\\
c_{k+2}(0){\sqrt d}^{k+1}+ n^{-1/2}c_{k+3}(\Rad){\sqrt d}^{k+3},&\text{$k$ odd.}
\end{cases}
\eeq 
It remains to note that, upon factoring out ${\sqrt d}^{k+1}$ in the second line for $k$ odd, we obtain $c_{k+2}(0)+(d/\sqrt n)c_{k+3}(\Rad) = \bar c_{k+2}(\Rad)$. This finishes the proof of~\eqref{partial-r-R}. The proof of~\eqref{partial-F-0} and~\eqref{partial-F-R} is exactly analogous, so we omit it.
\end{proof}
We can now finish the proof of Proposition~\ref{corr:B-Lp}.
\begin{proof}[Proof of Proposition~\ref{corr:B-Lp}]Multiplying the inequalities from the above lemma by $n^{-k/2}$ gives 
\beqs\label{alpha-k-bd}
n^{-k/2}\|\partial_t^k\rr(0,\cdot)\|_q &\les_{k,q} \alpha_k(0)\epsilon^k,\qquad n^{-k/2}\|\partial_t^k\rr(t,\cdot)\ind_\U\|_q \les_{k,q} \alpha_k(\Rad)\epsilon^k,\\
n^{-k/2}\|\partial_t^kF(0,\cdot)\|_q &\les_{k,q} \alpha_{k,g}(0)\epsilon^k,\qquad n^{-k/2}\|\partial_t^kF(t,\cdot)\ind_\U\|_q \les_{k,q} \alpha_{k,g}(\Rad)\epsilon^k.
\eeqs
We now combine~\eqref{alpha-k-bd} with the inequality~\eqref{chik-prelim} to get an overall bound on $\|\chi_k(t,\cdot)\ind_\Omega\|$. Taking $\Omega=\R^d$ and $t=0$ in~\eqref{chik-prelim} and applying the bounds in the first column of~\eqref{alpha-k-bd} gives
\beqsn
n^{-k/2}\|\chi_k(0,\cdot)\|_2 &\les_k\sum_{\ell=0}^kB_\ell\l(\alpha_1(0)\epsilon\,,\, \dots, \alpha_\ell(0)\epsilon^\ell\r)\alpha_{k-\ell,g}(0)\epsilon^{k-\ell}\\
&\les_k\sum_{\ell=0}^kB_\ell\l(\alpha_1(0), \dots, \alpha_\ell(0)\r)\alpha_{k-\ell,g}(0)\epsilon^{k} = \A_k(0)\epsilon^k,\eeqsn as desired. To get the second line we used Lemma~\ref{lma:Bellprops}. This proves~\eqref{Bk0-bd}, and~\eqref{delta-k-Bk-bd} is proved analogously.
\end{proof}
\subsection{Formula for the coefficients}\label{sec:lead}Combining the definition of $\chi_k(0,x)$ and $A_k$ from~\eqref{chik0} and~\eqref{eq:Ak}, respectively, we obtain
\beqs\label{def:Ak}
A_k= \frac{1}{k!}\sum_{\ell=0}^k{k\choose\ell}\E\l[\la\nabla^{k-\ell}f(0), Z^{\otimes k-\ell}\ra\,B_\ell\l(\frac{-\lla\nabla^{3}v(0), Z^{\otimes 3}\rra}{2\cdot 3}, \dots,\frac{-\lla\nabla^{\ell+2}v(0), Z^{\otimes \ell+2}\rra}{(\ell+1)(\ell+2)}\r)\r].\eeqs In this section, we derive an explicit formula for the $A_k$ and show that $A_{k}=0$ for odd $k$. To do so, we first state a useful alternative representation of the Bell polynomials; see Appendix~\ref{app:sec:proof} for the proof.
\begin{lemma}\label{lma:alt:bell}For all $\ell\geq1$, the complete Bell polynomial $B_\ell$ defined in~\eqref{def:bell} has the following equivalent form:
\beq\label{eq:Bk:equiv}
B_\ell(x_1,\dots,x_\ell)=\ell!\sum_{r=1}^\ell\frac{1}{r!}\;\sum_{\substack{m_1+m_2+\dots+m_r=\ell\\ m_1,\dots,m_r\geq1}}\,\prod_{j=1}^r\frac{x_{m_j}}{m_j!}
\eeq
\end{lemma}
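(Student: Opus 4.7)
The strategy is to show that the two expressions are the same polynomial by reorganizing the sum in~\eqref{eq:Bk:equiv} according to the multiplicities of the parts. Concretely, I will pass from the sum over compositions $(m_1,\dots,m_r)$ of $\ell$ in~\eqref{eq:Bk:equiv} to a sum over exponent vectors $(j_1,\dots,j_\ell)$ with $\sum_i i j_i = \ell$, where $j_i$ counts how many of the $m_a$'s equal $i$. Once this change of indexing is performed, a straightforward multinomial bookkeeping step will collapse~\eqref{eq:Bk:equiv} to the definition~\eqref{def:bell}.

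\textbf{Key steps.} First, fix $r\in\{1,\dots,\ell\}$ and group compositions $(m_1,\dots,m_r)$ of $\ell$ with parts $\geq 1$ by their ``type": for each such composition define $j_i = \#\{a : m_a = i\}$, so that $j_i\geq 0$, $\sum_{i=1}^\ell j_i = r$, and $\sum_{i=1}^\ell i\, j_i = \ell$. The product $\prod_{j=1}^r \frac{x_{m_j}}{m_j!}$ depends only on the type and equals $\prod_{i=1}^\ell (x_i/i!)^{j_i}$. The number of compositions of a given type is the multinomial coefficient $\binom{r}{j_1,\dots,j_\ell} = \frac{r!}{j_1!\cdots j_\ell!}$. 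Therefore
\begin{equation*}
\sum_{\substack{m_1+\cdots+m_r=\ell\\ m_a\geq1}}\prod_{j=1}^r\frac{x_{m_j}}{m_j!}
= \sum_{\substack{j_1+2j_2+\cdots+\ell j_\ell=\ell\\ j_1+\cdots+j_\ell=r,\; j_i\geq0}} \frac{r!}{j_1!\cdots j_\ell!}\prod_{i=1}^\ell \frac{x_i^{j_i}}{(i!)^{j_i}}.
\end{equation*}

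Next, multiply by $\ell!/r!$ and sum over $r=1,\dots,\ell$. The factor $r!$ in the numerator of the multinomial coefficient cancels the $1/r!$, and the constraint $j_1+\cdots+j_\ell = r$ disappears when we combine sums over $r$ (since every admissible exponent vector with $\sum_i i j_i = \ell$ has a unique $r = \sum_i j_i \in\{1,\dots,\ell\}$). This yields
\begin{equation*}
\ell!\sum_{r=1}^\ell\frac{1}{r!}\sum_{\substack{m_1+\cdots+m_r=\ell\\ m_a\geq1}}\prod_{j=1}^r\frac{x_{m_j}}{m_j!}
= \ell!\sum_{\substack{j_1+2j_2+\cdots+\ell j_\ell=\ell\\ j_i\geq 0}}\prod_{i=1}^\ell\frac{x_i^{j_i}}{(i!)^{j_i} j_i!},
\end{equation*}
which is precisely $B_\ell(x_1,\dots,x_\ell)$ as defined in~\eqref{def:bell}.

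\textbf{Main obstacle.} The only subtlety is the combinatorial bookkeeping in the first step: one needs to be careful that the multinomial coefficient $r!/(j_1!\cdots j_\ell!)$ correctly counts compositions (ordered tuples) rather than partitions, and that the constraint $\sum_i j_i = r$ is implicit. No analytic difficulty appears, since the identity is purely an equality of polynomials that reduces to matching monomial coefficients.
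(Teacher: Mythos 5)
Your proof is correct and uses essentially the same combinatorial idea as the paper's: the bijection between ordered compositions $(m_1,\dots,m_r)$ of $\ell$ and their ``type'' vectors $(j_1,\dots,j_\ell)$, with the multinomial coefficient $r!/(j_1!\cdots j_\ell!)$ counting compositions of a given type. The paper runs this in the opposite direction, starting from the partial Bell polynomials $B_{\ell,r}$ and expanding each monomial into its family of compositions, while you start from the composition sum and collapse it by type — but this is the same argument, merely read backward.
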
 
This gives, for all $\ell\geq1$, that
\beqs\label{Bk-official}
B_\ell&\l(\frac{-\la\nabla^{3}v(0), x^{\otimes 3}\ra}{2\cdot 3}, \dots,\frac{-\la\nabla^{\ell+2}v(0), x^{\otimes \ell+2}\ra}{(\ell+1)(\ell+2)}\r)\\
&=\ell!\sum_{r=1}^\ell\frac{(-1)^r}{r!}\;\sum_{\substack{m_1+m_2+\dots+m_r=\ell\\ m_1,\dots,m_r\geq1}}\;\prod_{j=1}^r\frac{\la\nabla^{m_j+2}v(0), x^{\otimes m_j+2}\ra}{(m_j+2)!}
\eeqs 
Therefore, separating out the case $\ell=0$ in~\eqref{def:Ak} from the rest, we obtain
\beqs\label{Akr0}
A_k = &\frac{1}{k!}\E\l[\la\nabla^{k}f(0), Z^{\otimes k}\ra\r] \\
&+\sum_{\ell=1}^k\sum_{r=1}^\ell\frac{(-1)^r}{r!}\;\sum_{\substack{m_1+m_2+\dots+m_r=\ell\\ m_1,\dots,m_r\geq1}}\E\l[\frac{\la\nabla^{k-\ell}f(0), Z^{\otimes k-\ell}\ra}{(k-\ell)!}\prod_{j=1}^r\frac{\la\nabla^{m_j+2}v(0), Z^{\otimes m_j+2}\ra}{(m_j+2)!}\r]
\eeqs 
We now compute these expectations explicitly to prove the following theorem. To state the theorem, we introduce some multi-index notation. For a multi-index $\balph=(\alpha^1,\dots,\alpha^d)$, with $\alpha^j\geq0$ for all $j$, we define $|\balph|=\alpha^1+\dots+\alpha^d$ and $\balph!=\alpha^1!\dots\alpha^d!$. Similarly, $\balph!! =\alpha_1!!\dots\alpha_d!!$. If $\balph$ and $\bbet$ are two multi-indices, then $\balph+\bbet$ is their entrywise sum, and similarly for differences. We let $\mathbf 1= (1,\dots, 1)$ be the multi-index of all ones. If $x\in\R^d$ then $x^\balph=x_1^{\alpha^1}\dots x_d^{\alpha^d}$. We let $\partial^\balph f(x)=\partial_{x_1}^{\alpha^1}\dots\partial_{x_d}^{\alpha^d}f(x)$ whenever this partial derivative exists.

\begin{theorem}\label{lma:leading}For all $k\geq1$, it holds
\beqs\label{Akf-thm}
A_k = \sum_{|\bbet|=k}&\frac{(\bbet-\mathbf{1})!!}{\bbet!}\partial^\bbet f(0)\even(\bbet)\\
+\sum_{\ell=1}^k&\sum_{r=1}^\ell\frac{(-1)^r}{r!}\;\sum_{\substack{m_1+m_2+\dots+m_r=\ell\\ m_1,\dots,m_r\geq1}}\;\sum_{|\bbet|=k-\ell,|\balph_1|=m_1+2,\dots,|\balph_r|=m_r+2}\\
&\even(\bbet+\balph_1+\dots + \balph_r)\frac{(\bbet+\balph_1+\dots+\balph_r-\mathbf{1})!!}{\bbet!\balph_1!\dots\balph_r!}\partial^\bbet f(0)\partial^{\balph_1} v(0)\dots\partial^{\balph_r}v(0),
\eeqs where $0!! =0!=1$ and $\even(\balph)=1$ if $\alpha_i$ is even for each $i=1,\dots, d$, and $\even(\balph)=0$ otherwise.
\end{theorem}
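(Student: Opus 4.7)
Starting from the representation~\eqref{Akr0}, the task reduces to computing Gaussian expectations of products of tensor contractions $\la\nabla^m h(0), Z^{\otimes m}\ra$. The plan is a two-step bookkeeping argument: pass from tuple sums to multi-index sums, then apply Wick's formula in multi-index notation.

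First I would rewrite each tensor contraction in multi-index form. Grouping the tuples $(i_1,\dots,i_m)$ in $\la\nabla^m h(0), Z^{\otimes m}\ra=\sum_{i_1,\dots,i_m}\partial^m_{i_1\cdots i_m}h(0)Z_{i_1}\cdots Z_{i_m}$ according to the multi-index $\balph$ counting occurrences of each coordinate, and using the symmetry of $\nabla^m h(0)$ together with the multinomial count $m!/\balph!$ of tuples per multi-index, one obtains
$$\frac{\la\nabla^m h(0), Z^{\otimes m}\ra}{m!} = \sum_{|\balph|=m}\frac{\partial^{\balph}h(0)}{\balph!}\,Z^{\balph}.$$
The key simplification is that the factorials $1/(k-\ell)!$ and $1/(m_j+2)!$ already present inside the expectation in~\eqref{Akr0} cancel exactly against these multinomial weights. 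Next, by independence of the coordinates and the one-dimensional moment identity $\E[Z_i^{2p}] = (2p-1)!!$ (with the convention $(-1)!!=1$ covering the case of a coordinate that does not appear), one has the multi-index Wick formula
$$\E[Z^{\boldsymbol\gamma}] = \even(\boldsymbol\gamma)\cdot(\boldsymbol\gamma-\mathbf 1)!!.$$

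Combining these two steps with~\eqref{Akr0} is then routine. The $\ell=0$ contribution equals $\frac{1}{k!}\E[\la\nabla^k f(0), Z^{\otimes k}\ra] = \sum_{|\bbet|=k}\frac{\partial^{\bbet}f(0)}{\bbet!}\even(\bbet)(\bbet-\mathbf 1)!!$, which is the first sum in~\eqref{Akf-thm}. For $\ell\ge1$, expanding the product of $r+1$ multi-index sums produces a monomial $Z^{\bbet+\balph_1+\cdots+\balph_r}$ under the expectation; applying Wick and summing over the constrained multi-indices $\bbet,\balph_1,\dots,\balph_r$ of sizes $k-\ell,m_1+2,\dots,m_r+2$ yields the second sum in~\eqref{Akf-thm}. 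As a byproduct, when $k$ is odd one has $|\bbet|=k$ in the first sum and $|\bbet+\sum_j\balph_j| = (k-\ell)+\sum_j(m_j+2) = k+2r$ in the second, both odd, so every $\even(\cdot)$ factor vanishes and $A_k=0$, consistent with the claim announced before the theorem.

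The proof has no substantive analytic difficulty; the only real obstacle is combinatorial hygiene, in particular verifying the clean cancellation between the $m!/\balph!$ weights produced by the tuple-to-multi-index conversion and the $1/m!$ prefactors already present in~\eqref{Akr0}, and correctly bookkeeping the constrained multi-index sum arising from the factor $\prod_j\partial^{\balph_j}v(0)$. Once this cancellation is observed and the multi-index Wick formula is invoked, the theorem follows by direct substitution.
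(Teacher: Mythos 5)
Your proposal is correct and follows essentially the same route as the paper's proof: convert each contraction $\la\nabla^m h(0), Z^{\otimes m}\ra$ from a tuple sum to a multi-index sum (with the $m!/\balph!$ multiplicity canceling the $1/m!$ prefactor), apply the coordinate-wise Gaussian moment formula $\E[Z^{\boldsymbol\gamma}]=\even(\boldsymbol\gamma)(\boldsymbol\gamma-\mathbf 1)!!$, and substitute back into~\eqref{Akr0}. The observation that $A_k=0$ for odd $k$ via parity of $|\bbet+\sum_j\balph_j|$ also matches the paper.
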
 See Appendix~\ref{app:sec:proof} for the proof. These $A_{k}$ coincide with the coefficients obtained in the finite dimensional Laplace asymptotic expansion given in~\cite{kirwin2010higher}; see the formula in Theorem 1.1 of the arXiv version of~\cite{kirwin2010higher}, and Theorem 1.2 of the published version of this work. We have intentionally used the same notation. The only difference is that we have factored out the $(2\pi)^{d/2}/\sqrt{\det H}$, and that we have separated out the $\ell=0$ case into the first line of~\eqref{Akf-thm}. 

Note that if $|\balph|$ is odd, then all the indices $\alpha_i$ cannot be even, and hence $\even(\balph)=0$. But note that $|\bbet+\balph_1+\dots+\balph_r| = (k-\ell) +(m_1+2)+\dots+(m_r+2) = k-\ell + \ell + 2r = k+2r$ is odd if $k$ is odd. Therefore the sum in the second and third line is zero when $k$ is odd, and by the same reasoning, the sum in the first line is also zero when $k$ is odd. We conclude that $A_k=0$ for all odd $k$.
\begin{corollary}\label{corr:leading}
When $f=1$, the $A_k$ are given as follows for all $k\geq1$:
\beqs
A_k =  &\sum_{r=1}^k\frac{(-1)^r}{r!}\;\sum_{\substack{m_1+m_2+\dots+m_r=k\\ m_1,\dots,m_r\geq1}}\;\sum_{|\balph_1|=m_1+2,\dots,|\balph_r|=m_r+2}\\
&\even(\balph_1+\dots + \balph_r)\frac{(\balph_1+\dots+\balph_r-\mathbf{1})!!}{\balph_1!\dots\balph_r!}\partial^{\balph_1} v(0)\dots\partial^{\balph_r}v(0),
\eeqs 
\end{corollary}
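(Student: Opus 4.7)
The plan is to simply specialize the formula of Theorem~\ref{lma:leading} to the case $f\equiv 1$. The key observation is that for the constant function $f\equiv 1$, we have $\partial^\bbet f(0)=0$ whenever $|\bbet|\geq 1$ and $\partial^{\mathbf 0}f(0)=1$. So every summand in Theorem~\ref{lma:leading} containing a factor $\partial^\bbet f(0)$ with $\bbet\neq \mathbf 0$ vanishes, and we only need to collect the surviving terms corresponding to $\bbet=\mathbf 0$.

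First I would observe that the first line of~\eqref{Akf-thm}, namely $\sum_{|\bbet|=k}\frac{(\bbet-\mathbf 1)!!}{\bbet!}\partial^\bbet f(0)\,\even(\bbet)$, involves only multi-indices with $|\bbet|=k\geq 1$, hence $\bbet\neq\mathbf 0$. Therefore each $\partial^\bbet f(0)=0$ and the entire first line is zero.

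Next, I would examine the double sum in the second and third lines of~\eqref{Akf-thm}, where the inner sum ranges over multi-indices with $|\bbet|=k-\ell$. The only way to get $\bbet=\mathbf 0$ is to have $k-\ell=0$, i.e. $\ell=k$, so the outer sum over $\ell\in\{1,\dots,k\}$ collapses to the single term $\ell=k$. Setting $\bbet=\mathbf 0$ gives $\bbet!=1$, $\partial^{\bbet}f(0)=1$, and $\even(\bbet+\balph_1+\dots+\balph_r)=\even(\balph_1+\dots+\balph_r)$, as well as $(\bbet+\balph_1+\dots+\balph_r-\mathbf 1)!!=(\balph_1+\dots+\balph_r-\mathbf 1)!!$. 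Substituting these simplifications yields precisely the expression claimed in Corollary~\ref{corr:leading}.

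There is no real obstacle here; the entire argument is a direct bookkeeping exercise once one notices that the $f\equiv 1$ case forces $\bbet=\mathbf 0$ throughout. The one minor sanity check is that the constraint $m_1+\dots+m_r=\ell$ with $\ell=k$ matches the constraint $m_1+\dots+m_r=k$ stated in the corollary, which it does.
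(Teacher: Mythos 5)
Your proposal is correct and is precisely the intended derivation: since the paper states Corollary~\ref{corr:leading} immediately after Theorem~\ref{lma:leading} without a separate proof, it implicitly treats it as the direct specialization you carry out, namely observing that $f\equiv 1$ forces $\partial^\bbet f(0)=0$ for $|\bbet|\geq 1$, which kills the first line of~\eqref{Akf-thm} and collapses the outer sum over $\ell$ to $\ell=k$, leaving exactly the claimed expression.
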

\appendix
\section{The term $A_2n^{-1}$ and comparison to~\cite{shun1995laplace}}\label{app:shun}
We first give the formula for $A_2$ both as a Gaussian expectation and an explicit sum. Using~\eqref{A-expect} with $k=1$, and the fact that $B_0=1$, $B_1(s_1)=s_1$, and $B_2(s_1,s_2)=s_1^2+s_2$, we obtain
\beqs\label{A2f-explicit}
A_{2} = &\frac12\E\l[\la\nabla^2f(0), Z^{\otimes2}\ra\r] -\frac16\E\l[\la\nabla f(0), Z\ra\la\nabla^3v(0), Z^{\otimes3}\ra\r] \\
&+ \frac{f(0)}{72}\E\l[\la\nabla^3v(0), Z^{\otimes3}\ra^2\r]-\frac{f(0)}{24}\E\l[\la\nabla^4v(0), Z^{\otimes4}\ra\r]\\
 =&\frac12\Delta f(0)-\frac12\sum_{i,j=1}^d\partial_if(0)\partial_{ijj}^3v(0) + \frac{f(0)}{12}\|\nabla^3v(0)\|^2_{F} \\
&+ \frac{f(0)}8\sum_{i=1}^d\l(\partial_i\Delta v(0)\r)^2 - \frac{f(0)}8\sum_{i,j=1}^d\partial_i^2\partial_j^2v(0).
\eeqs Here, recall that $v(x)=\u(\xmin+H^{-1/2}x)-\u(\xmin)$ and $f(x)=g(\xmin+H^{-1/2}x)$.\\

The work~\cite{shun1995laplace} considers the magnitude of $A_2n^{-1}$ when $g\equiv1$ and therefore also $f\equiv1$. Let us write the explicit formula for $A_2$ in this case. Substituting $f\equiv1$ into~\eqref{A2f-explicit} gives
\beq\label{A2I}A_2 = \frac{1}{12}\|\nabla^3v(0)\|^2_{F} + \frac{1}8\sum_{i=1}^d\l(\partial_i\Delta v(0)\r)^2 - \frac{1}8\sum_{i,j=1}^d\partial_i^2\partial_j^2v(0).\eeq
We claim that $A_2n^{-1}$ coincides with the ``correction term" $\epsilon_0$ from (2) in~\cite{shun1995laplace}. To see that the formulas are the same, it is simplest to consider the case when $H=\nabla^2\u(\xmin)=I_d$, in which case $v=\u$, up to translation. We now use $\hat g^{ij}=\delta_{ij}$, $\hat g_{ijk}=\partial^3_{ijk}v(0)$, and $\hat g_{ijk\ell}=\partial^4_{ijk\ell}v(0)$ in (2) of~\cite{shun1995laplace}, to get precisely $A_2n^{-1}$.

In their Section 6 on GLMs (for which the authors use the term ``exponential models"), Shun and McCullagh consider essentially the same setting as we do in Section~\ref{sec:log}. (To follow their reasoning in Section 6, it is important to note that $g$ now denotes what was called $ng$ in Sections 1 and 2, and there is no more hat on the $g$). After a linear transformation by $H^{-1/2}$, they obtain $ g_{ij}=n\partial^2_{ij}v(0)=n\delta_{ij}$ and the higher order derivatives $g_{ijk}=n\partial^3_{ijk}v(0)$, $g_{ijk\ell}=n\partial^4_{ijk\ell}v(0)$ are precisely as in~\eqref{nablav-log}. See display (10) in~\cite{shun1995laplace} as well as the display and discussion both above and below (10).

The authors argue that $\epsilon_0=A_2n^{-1}$ scales as $d^3/n$ based on the fact that $\epsilon_0$ is given by a sum of $d^3$ terms of order $n^{-1}$.  Indeed, note that in~\eqref{A2I} the first two sums are both over $d^3$ terms (expanding the Laplacian in the second sum), and the third sum is over $d^2$ terms. However, simply counting the number of terms in a sum can yield too coarse of an estimate. Indeed, the bound~\eqref{RemL-log} from Corollary~\ref{corr:expand:log} implies $|A_2n^{-1}|=|\Rem_1 - \Rem_2|\les d^2/n$, disproving the claim that $A_2n^{-1}\sim d^3/n$. This is true under the assumptions of i.i.d. Gaussian features $X_i$, which is a special case of the assumption made in~\cite{shun1995laplace}.

We now give a direct proof that $A_2$ is bounded by $d^2$, based on the representation~\eqref{A2I} as an explicit sum. We claim that
$$|A_2| \leq d^2\l(\|\nabla^3v(0)\|^2 + \|\nabla^4v(0)\|\r),$$ and therefore, $|A_2|$ scales as $d^2$, since we know $\|\nabla^3v(0)\|, \|\nabla^4v(0)\|=\mathcal O(1)$ by Corollary~\ref{corr:c:log}. (Recall that $\|\nabla^kv(0)\|=\|\nabla^k\u(\xmin)\|_H=c_k(0)$.)

The third sum in~\eqref{A2I} is clearly seen to be bounded by $d^2\|\nabla^4v(0)\|$, since $|\partial^4_{ijk\ell}v(0)|\leq\|\nabla^4v(0)\|$ for all $i,j,k,\ell$. To bound the first term, let $T=\nabla^3v(0)$ and $T_i$ be the matrix whose $jk$th entry is $T_{ijk}$. Note that 
$$\|T_i\| = \sup_{\|u\|=\|v\|=1}\la T_i, u\otimes v\ra = \sup_{\|u\|=\|v\|=1}\la T, u\otimes v\otimes e_i\ra \leq \|T\|$$ for all $i$. Therefore, 
\beq\label{TF}
\|\nabla^3v(0)\|_F^2=\|T\|_F^2=\sum_{i=1}^d\mathrm{Tr}(T_i^\T T_i)\leq\sum_{i=1}^dd\|T_i^\T T_i\| \leq d^2\max_{1\leq i\leq d}\|T_i\|^2 \leq d^2\|T\|^2.\eeq Finally, note that $\sum_{i=1}^d\l(\partial_i\Delta v(0)\r)^2 = \|\la T, I_d\ra\|^2,$ where $\la T,I_d\ra\in\R^d$ is the vector with entries $\la T, I_d\ra_i = \sum_{j=1}^dT_{ijj}$. Now, we have
\beqsn
\|\la T,  I_d\ra\| &= \sup_{\|u\|=1}\la T, I_d\ra^\T u = \sup_{\|u\|=1}\la T, I_d\otimes u\ra\\
&\leq\sup_{\|u\|=1}\sum_{j=1}^d|\la T, e_j\otimes e_j\otimes u\ra| \leq d\|T\|\eeqsn and hence $$\sum_{i=1}^d\l(\partial_i\Delta v(0)\r)^2= \|\la T,  I_d\ra\| ^2 \leq d^2\|T\|^2.$$
 \section{Proofs from Section~\ref{sec:main}}\label{app:main}
\begin{proof}[Proof of Lemma~\ref{lma:powers}]
The Bell polynomials have the following property: $x^jB_j(y_1,y_2,\dots,y_j)=B_j(x^1y_1,x^2y_2,\dots,x^jy_j)$; see Lemma~\ref{lma:Bellprops}. Therefore,
\beq\label{Aktauk}\A_j\tau^{-j} = \sum_{\ell=0}^{j}\alpha_{j-\ell,g}\tau^{-(j-\ell)}B_{\ell}(\alpha_1\tau^{-1},\dots,\alpha_\ell\tau^{-\ell}),\eeq where we have omitted the radius argument from $\A_j$ and the $\alpha_j$'s for brevity. 


To bound $\A_{j}(\Rad)$, first note that~\eqref{ck-ev-R} and~\eqref{ckg-ev-R} and Definition~\ref{def:alph} of the $\alpha$'s imply $\alpha_k(\Rad)\tau^{-k}\les_{L} 1$ and $\alpha_{k,g}(\Rad)\tau^{-k}\les_{L} 1$ for even $k$. For odd $k$, we have
\beqsn
\alpha_{k,g}(\Rad)\tau^{-k} &= (c_{k,g}(0)+\epsilon c_{k+1,g}(\Rad))\tau^{- k}/{\sqrt d}^{k+1}\\
&=c_{k,g}(0)\tau^{- k}/{\sqrt d}^{k+1} + (\epsilon\tau)c_{k+1,g}(\Rad)\tau^{-(k+1)}/{\sqrt d}^{k+1}\les_{L} 1
\eeqsn using~\eqref{ckg-0} and~\eqref{ckg-ev-R}. A similar argument gives $\alpha_k(\Rad)\tau^{-k}\les_{L}1$ for odd $k$. We conclude that $\alpha_k(\Rad)\tau^{-k}\les_{L}1$ and $\alpha_{k,g}(\Rad)\tau^{-k}\les_{L}1$ for all $k\leq j$. It follows from~\eqref{Aktauk} that $\A_{j}(\Rad)\tau^{-j}\les_{L} 1$, as desired.

To prove the second bound of~\eqref{Ak-rad}, we apply~\eqref{ck-0}, $k=3$ and~\eqref{ck-ev-R}, $k=4$ to get $c_3(0)\les\tau$ and $c_4(\Rad)\les \tau^{2}$.
Next, a Taylor expansion gives 
$$c_3(\Rad)\leq c_3(0)+\Rad\sqrt{d/n}c_4(\Rad)\leq \tau + \Rad\tau^2\epsilon.$$ Therefore
\beqsn
\Rad^2c_3(\Rad)\epsilon &\leq \Rad^2\tau\epsilon + (\Rad^2\tau\epsilon)^2 \les (\Rad^2\tau\epsilon\vee1)^2,\\
c_4(\Rad)\epsilon^2 &\leq (\tau\epsilon)^2\leq 1.
\eeqsn Finally, note from~\eqref{tau-ep} and~\eqref{Rad-simpler} that $\Rad^2\tau\epsilon\vee1\les_L1$. Combining these estimates gives the desired result.
\end{proof} 
\paragraph{Derivation of formula for $A_{2k}$ in~\eqref{quart-decomp}.} First note that $\u$ has a global minimizer at $\xmin=0$, with $H=\nabla^2\u(0)=I_d$. Next, note that when $f\equiv1$, the formula~\eqref{A-expect} for $A_{2k}$ reduces to
\beq\label{A2k-part}A_{2k}=\frac{1}{(2k)!}\E\l[ B_{2k}\l(\frac{-\lla\nabla^{3}v(0), Z^{\otimes 3}\rra}{2\cdot 3}, \dots,\frac{-\lla\nabla^{2k+2}v(0), Z^{\otimes 2k+2}\rra}{(2k+1)(2k+2)}\r)\r].\eeq Using~\eqref{A2k-part}, the fact that $\u=v$ (since $H=I_d$), and~\eqref{lanabla4}, we obtain
\beqsn
A_{2k}&=\frac{1}{(2k)!}\E\l[B_{2k}\l(\-\lla\nabla^{3}v(0), Z^{\otimes 3}\rra/(2\cdot 3), \dots,-\lla\nabla^{2k+2}v(0), Z^{\otimes 2k+2}\rra/((2k+1)(2k+2))\r)\r]\\
&=\frac{1}{(2k)!}\E\l[B_{2k}\l(0, -\frac{1}{12}\la\nabla^4v(0), Z^{\otimes4}\ra, 0, \dots,0\r)\r] = \frac{1}{(2k)!}\E\l[B_{2k}\l(0, -\frac{\|Z\|^4}{12},0,\dots,0\r)\r].\eeqsn But now, using the formula~\eqref{def:bell} for the Bell polynomials, we have for all $k\geq1$ that
\beqsn
A_{2k}&=\frac{1}{(2k)!}\E\l[B_{2k}(0,-\|Z\|^4/12, 0,\dots,0)\r] =\frac{\E\l[(-\|Z\|^4/12)^{k}\r]}{(2!)^kk!}\\
&=\frac{(-1)^k}{k!(24)^k}\E\l[\|Z\|^{4k}\r] = \frac{(-1)^k}{k!(24)^k}\frac{2^{2k}\Gamma(2k+d/2)}{\Gamma(d/2)} = \l(\frac{-1}{6}\r)^k\frac{\Gamma(2k+d/2)}{k!\Gamma(d/2)}.\eeqsn To compute the Gaussian expectation, we used that $\E[\|Z\|^{4k}]$ is the $(2k)$th moment of $\chi^2_d$, a chi-squared distribution with $d$ degrees of freedom. From here, the definition of the gamma function immediately gives that $C_kd^{2k}\leq |A_{2k}| \leq C_k'd^{2k}$ for some $C_k,C_k'>0$.

\begin{proof}[Proof of Proposition~\ref{prop:quart}]
Recall the minimizer is $\xmin=0$, with $H=I_d$. We have $\nabla^k\u(x)\equiv0$ for all $k>4$, while $\nabla^3\u(0)=0$ and $\nabla^4\u(x)=\nabla^4\u(0)$ for all $x$, implying $c_4(r)=c_4(0)$ for all $r\geq0$.  
Furthermore, we have
\beq\label{lanabla4}\la\nabla^4\u(0), x^{\otimes4}\ra = \frac{d^4}{dt^4}\frac{1}{24}\|tx\|^4\bigg\vert_{t=0} = \|x\|^4,\eeq so $c_4(r)=c_4(0)=\|\nabla^4\u(0)\|=1$ for all $r\geq0$.

Now we check the assumptions of Corollary~\ref{prop:R1:corr} with $\tau=1$. It is straightforward to show that if $\epsilon\leq1/2$, then~\eqref{tau-ep} is satisfied. Also, Assumptions~\ref{assume:min},~\ref{assume:CL},~\ref{assume:tail} are all clearly satisfied. Finally, the above calculations of the $c_k$ show that~\eqref{ck-0},~\eqref{ck-ev-R} are both satisfied with $\tau=1$ (and~\eqref{ckg-0},~\eqref{ckg-ev-R} are trivially satisfied since $g\equiv1$). Thus we obain $|\Rem_L|\les_L (d^2/n)^L$. To show that $\Rem_L\asymp (d^2/n)^L$, we use that $|A_{2L}n^{-L}|\gtrsim_L (d^2/n)^L$ and $|\Rem_{L+1}|\les_{L}(d^2/n)^{L+1}$, which implies $|\Rem_L|\gtrsim_L (d^2/n)^L$ if $d^2/n$ is small enough. 
\end{proof}
\section{Proofs from Section~\ref{sec:log}}\label{app:log}
\begin{proof}[Proof of Lemma~\ref{lma:adam}]
We have 
$$\nabla^k\u(x)=\frac1n\sum_{i=1}^n\phi^{(k)}(X_i^\T x)X_i^{\otimes k},$$ so that
\beq\label{nablakux}\|\nabla^k\u(x)\|\leq \|\phi^{(k)}\|_\infty\sup_{u\in S^{d-1}}\frac1n\sum_{i=1}^n|X_i^\T u|^k,\eeq where $S^{d-1}$ is the unit sphere in $\R^d$. Now, let $m=\max(d,\sqrt{\log n})$ which ensures $d\leq m\leq n\leq e^{\sqrt m}$. This is needed to apply the result of~\cite{adamczak2010quantitative} cited below. Let $\bar X_i=(X_i, Y_i)$, where $Y_i\iid\mathcal N(0, I_{m-d})$, and the $Y_i$ are independent of the $X_i$. Then $\bar X_i\iid\mathcal N(0, I_m)$ and we have
\beq\label{Xiu}
\sup_{u\in S^{d-1}}\frac1n\sum_{i=1}^n|X_i^\T u|^k \leq \sup_{u\in S^{m-1}}\frac1n\sum_{i=1}^n|\bar X_i^\T u|^k
\eeq with probability 1. Next, by~\cite[Proposition 4.4]{adamczak2010quantitative} applied with $t=s=1$, it holds
\beqs\label{ub-c3}
 \sup_{u\in S^{m-1}}\frac1n\sum_{i=1}^n|\bar X_i^\T u|^k \leq  &\sup_{u\in S^{m-1}}\frac1n\sum_{i=1}^n\E\l[|\bar X_i^\T u|^k\r]\\
 &+C_k\log^{k-1}\l(\frac{2n}{m}\r)\sqrt{\frac mn} + C_k\frac{m^{k/2}}{n} + C_k\frac{m}{n}
\eeqs with probability at least
\beq\label{prob-c3}
1-\exp(-C\sqrt n)-\exp(-C_k\min(n\log^{2k-2}(2n/m), \sqrt{nm}/\log(2n/m)))\geq 1-2\exp(-C_k\sqrt n),
\eeq where $C$ is an absolute constant and $C_k$ depends only on $k$. The inequality in~\eqref{prob-c3} follows by noting that $\log(2n/m)\geq\log 2$ and $\log(2n/m)\leq\log(2e^{\sqrt m})\leq\log(e^{2\sqrt m})=2\sqrt m$, and therefore, $n\log^{2k-2}(2n/m)\geq C_kn$ and $\sqrt{nm}/\log(2n/m)\geq \sqrt n/2$.
We can also further upper bound~\eqref{ub-c3} by using that $m\leq n$ and $\E\l[|\bar X_i^\T u|^k\r]=C_k$ for all $i$ and some $C_k$. Therefore,
\beq\label{further-ub}
 \sup_{u\in S^{m-1}}\frac1n\sum_{i=1}^n|\bar X_i^\T u|^k \leq C_k\l(1+\frac{m^{k/2}}{n}\r) \leq C_k\l(1+\frac{d^{k/2}}{n}\r),
\eeq where $C_k$ may change value. The second inequality uses that $m\leq d+\sqrt{\log n}$. 
Combining~\eqref{nablakux},~\eqref{Xiu}, \eqref{further-ub}, \eqref{prob-c3} gives
$$
\|\nabla^k\u(x)\|\leq C_k\|\nabla^{(k)}\phi\|_\infty\l(1+\frac{d^{k/2}}{n}\r)\quad\text{with prob. at least}\quad 1-2\exp(-C_k\sqrt n),
$$ as desired.
\end{proof}
\paragraph{Derivation of $A_2$.}
Recall that the $A_{2k}$ are given as in Theorem~\ref{lma:leading}, with $f(x)=g(\xmin +H^{-1/2}x)$ and $v(x)=\u(\xmin +H^{-1/2}x)$. For $\partial^{\balph}v(0)$ in the setting of Section~\ref{sec:log}, we have the explicit formula
\beq\label{nablav-log}
\partial^{\balph}v(0) = \frac1n\sum_{i=1}^n\phi^{(|\balph|)}(X_i^\T \xmin )(H^{-1/2}X_i)^{\balph}.\eeq As an example, we now derive the following formula for $A_2$, by applying~\eqref{A2f-explicit} to the present setting:
\beqs\label{A2f-explicit-log}
A_2 =&\frac12\mathrm{Tr}\l(\nabla^2g(\xmin )H^{-1}\r)\\
&-\frac1{2n}\sum_{\ell=1}^n\phi^{(3)}(X_\ell^\T \xmin )\l(\nabla g(\xmin )^\T H^{-1}X_\ell\r)t_{\ell,\ell} \\
&- \frac{g(\xmin )}{8n}\sum_{\ell=1}^n\phi^{(4)}(X_\ell^\T \xmin )t_{\ell,\ell}^2\\
&+ \frac{g(\xmin )}{n^2}\sum_{\ell,m=1}^n\phi^{(3)}(X_\ell^\T \xmin )\phi^{(3)}(X_m^\T \xmin )\l(\frac{1}{12}t_{\ell,m}^3+\frac18t_{\ell,m}t_{\ell,\ell}t_{m,m}\r),
\eeqs
where $t_{\ell,m}=X_\ell^\T H^{-1}X_m$. \\

%
To derive this formula, first note that $\Delta f(0)=\mathrm{Tr}(\nabla^2f(0))=\mathrm{Tr}(H^{-1/2}\nabla^2g(\xmin )H^{-1/2})=\mathrm{Tr}(\nabla^2g(\xmin )H^{-1})$. Next, let $S_\ell=H^{-1/2}X_\ell$, $\ell=1,\dots,n$, and $\phi_{k,\ell}=\phi^{(k)}(X_\ell^\T \xmin )$. Let $S_{\ell,j}$ be the $j$th coordinate of $S_\ell$. We use~\eqref{nablav-log} to derive that
\beq\label{sumijj}
\sum_{j=1}^d\partial_{ijj}^3v(0) = \frac1n\sum_{\ell=1}^n\sum_{j=1}^d\phi_{3,\ell}S_{\ell,i}S_{\ell,j}^2 = \frac1n\sum_{\ell=1}^n\phi_{3,\ell}S_{\ell,i}\|S_\ell\|^2.
\eeq
Therefore, using that $\nabla f(0)=H^{-1/2}\nabla g(\xmin )$ we have
\beqsn
\sum_{i,j=1}^d\partial_if(0)\partial_{ijj}^3v(0) &= \frac1n\sum_{\ell=1}^n\phi_{3,\ell}\nabla f(0)^\T S_{\ell}\|S_\ell\|^2=\frac1n\sum_{\ell=1}^n\phi_{3,\ell}\nabla g(\xmin )^\T H^{-1/2}S_{\ell}\|S_\ell\|^2\\
&=\frac1{n}\sum_{\ell=1}^n\phi_{3,\ell}\l(\nabla g(\xmin )^\T H^{-1}X_\ell\r)\l(X_\ell^\T H^{-1}X_\ell\r).\eeqsn Next, we have
\beqsn
\|\nabla^3v(0)\|^2_{F} &=\sum_{i,j,k=1}^d\l(\frac1n\sum_{\ell=1}^n\phi_{3,\ell}S_{\ell,i}S_{\ell,j}S_{\ell,k}\r)^2=\frac{1}{n^2}\sum_{\ell,m=1}^n\phi_{3,\ell}\phi_{3,m}\sum_{i,j,k=1}^dS_{\ell,i}S_{\ell,j}S_{\ell,k}S_{m,i}S_{m,j}S_{m,k}\\
&=\frac{1}{n^2}\sum_{\ell,m=1}^n\phi_{3,\ell}\phi_{3,m}(S_\ell^\T S_m)^3 = \frac{1}{n^2}\sum_{\ell,m=1}^n\phi_{3,\ell}\phi_{3,m}(X_\ell^\T H^{-1} X_m)^3.
\eeqsn
Next, using~\eqref{sumijj} gives
\beqsn
\sum_{i=1}^d\l(\partial_i\Delta v(0)\r)^2&= \sum_{i=1}^d\l(\frac1n\sum_{\ell=1}^n\phi_{3,\ell}S_{\ell,i}\|S_\ell\|^2\r)^2= \frac{1}{n^2}\sum_{\ell,m=1}^n\phi_{3,\ell}\phi_{3,m}\|S_\ell\|^2\|S_m\|^2\sum_{i=1}^dS_{\ell,i}S_{m,i}\\
&= \frac{1}{n^2}\sum_{\ell,m=1}^n\phi_{3,\ell}\phi_{3,m}\|S_\ell\|^2\|S_m\|^2S_\ell^\T S_m\\
&= \frac{1}{n^2}\sum_{\ell,m=1}^n\phi_{3,\ell}\phi_{3,m}\l(X_\ell^\T H^{-1}X_m\r)\l(X_\ell^\T H^{-1}X_\ell\r)\l(X_m^\T H^{-1}X_m\r).
\eeqsn 
Finally,
\beqsn
\sum_{i,j=1}^d\partial_i^2\partial_j^2v(0) &= \frac1n\sum_{i,j=1}^d\sum_{\ell=1}^n\phi_{4,\ell}S_{\ell,i}^2S_{\ell,j}^2 = \frac1n\sum_{\ell=1}^n\phi_{4,\ell}\|S_\ell\|^4 = \frac1n\sum_{\ell=1}^n\phi_{4,\ell}\l(X_\ell^\T H^{-1}X_\ell\r)^2.\eeqsn
Substituting all of the above estimates into~\eqref{A2f-explicit} gives the desired result.

\section{Lipschitz concentration}\label{app:sec:lip}
\begin{theorem}[Theorem A1 in \cite{schlichting2019poincare}, Theorem 2.1 in~\cite{kolesnikov2016riemannian}]\label{thm:LSI}
Let $\U\subset\R^d$ be a convex set and $\mu$ be a probability measure on $\R^d$ given by $\mu(dx) = Z_\mu^{-1}e^{-H(x)}\ind_\U (x)dx$, where $Z_\mu$ is the normalization constant. Suppose $H\in C^2(\U)$, and $\nabla^2H(x)\succ\alpha I_d$ for all $x\in\U$. Then $\mu$ satisfies $\mathrm{LSI}(\alpha)$, where LSI is the log Sobolev inequality.
\end{theorem}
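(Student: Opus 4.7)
The plan is to deduce this LSI from the classical Bakry-\'Emery criterion on all of $\R^d$ via an approximation argument. Recall that Bakry-\'Emery asserts that whenever a probability measure $\tilde\mu\propto e^{-\tilde H}\,dx$ on $\R^d$ has $\nabla^2\tilde H\succeq\alpha I_d$ everywhere, $\tilde\mu$ satisfies $\mathrm{LSI}(\alpha)$. The strategy is thus to construct a family of such measures $\mu_\varepsilon$ that converge to $\mu$ as $\varepsilon\to 0$, with the LSI constant $\alpha$ preserved in the limit.

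First I would extend $H$ from $\U$ to an $\alpha$-strongly convex $C^2$ function $\bar H$ on all of $\R^d$. A convenient choice is, for $x\notin\bar\U$,
\[
\bar H(x)=H(\pi(x))+\nabla H(\pi(x))^\T(x-\pi(x))+\tfrac{\alpha}{2}\|x-\pi(x)\|^2,
\]
where $\pi$ is the Euclidean projection onto the closed convex set $\bar\U$. This is an $\alpha$-convex $C^{1,1}$ extension; convolving with a small smooth non-negative kernel in a neighborhood of $\partial\U$ yields a $C^2$ extension still satisfying $\nabla^2\bar H\succeq\alpha I_d$, since mollification preserves (strong) convexity.

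Next I would penalize the complement of $\U$. Choose $\rho\in C^\infty([0,\infty))$ convex and non-decreasing with $\rho(0)=\rho'(0)=0$ and $\rho(t)\to\infty$ as $t\to\infty$, and set
\[
H_\varepsilon(x)=\bar H(x)+\varepsilon^{-1}\rho(d(x,\U)),\qquad \mu_\varepsilon(dx)=Z_\varepsilon^{-1}e^{-H_\varepsilon(x)}\,dx.
\]
Because $\U$ is convex, $x\mapsto d(x,\U)$ is a non-negative convex function on $\R^d$; composing with $\rho$ keeps it convex, and a light mollification produces a smooth convex penalty. Thus $H_\varepsilon\in C^2(\R^d)$ with $\nabla^2 H_\varepsilon\succeq\alpha I_d$, and Bakry-\'Emery gives, for every smooth compactly supported $f$,
\[
\mathrm{Ent}_{\mu_\varepsilon}(f^2)\;\le\;\frac{2}{\alpha}\int|\nabla f|^2\,d\mu_\varepsilon.
\]
Finally, passing $\varepsilon\to 0$: since the penalty blows up off $\U$ while $H_\varepsilon=\bar H=H$ on $\U$, the densities converge pointwise and $\mu_\varepsilon\to\mu$ in total variation. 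For bounded smooth $f$ with bounded gradient, the Dirichlet form converges by dominated convergence, and the entropy functional is lower semicontinuous (and in fact continuous here by uniform integrability of the log-density on $\U$). This yields $\mathrm{LSI}(\alpha)$ for $\mu$ on a dense class of test functions, from which the general inequality follows by the standard truncation and smoothing argument.

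The main technical obstacle is the regularity at $\partial\U$: the projection-based extension and the distance function $d(\cdot,\U)$ are only $C^{1,1}$ in general, so verifying $\nabla^2 H_\varepsilon\succeq\alpha I_d$ in the classical pointwise sense requires the mollification step to be carried out without degrading the Hessian lower bound. The cleanest way around this is to note that Bakry-\'Emery (and the resulting LSI) extends to semiconvex potentials whose Hessian lower bound holds in the distributional sense, which is automatic for the sum of a classically $\alpha$-convex $C^2$ function and a convex penalty.
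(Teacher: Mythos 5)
The paper does not prove this statement itself; Theorem~\ref{thm:LSI} is imported verbatim from \cite{schlichting2019poincare} and \cite{kolesnikov2016riemannian}, so there is no ``paper's proof'' to compare against. Your penalization / approximation route (extend the potential, add a stiff convex barrier, apply Bakry--\'Emery, send the stiffness to infinity) is one of the recognized ways to prove LSI for strongly log-concave measures restricted to a convex body, and the overall architecture of the argument is sound. For context, the cited references reach the same conclusion by different means: Kolesnikov--Milman work directly on the convex body with a Riemannian/semigroup argument and Neumann boundary conditions, so they never leave $\U$ and never need an extension or a barrier; your route instead stays in the unconstrained Bakry--\'Emery world at the cost of a limiting procedure. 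Both are legitimate, and yours is arguably the more elementary one to write down, modulo the regularity issue below.

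There is one step that, as written, is not correct. You assert that
\[
\bar H(x)=H(\pi(x))+\nabla H(\pi(x))^\T(x-\pi(x))+\tfrac{\alpha}{2}\|x-\pi(x)\|^2
\]
is ``an $\alpha$-convex $C^{1,1}$ extension.'' The metric projection $\pi$ onto a general convex set is only Lipschitz, not $C^1$ (consider a polytope, where $\pi$ has kinks along the normal cones of faces), so the composites $H\circ\pi$ and $\nabla H\circ\pi$ are not $C^1$, and the displayed $\bar H$ is not $C^{1,1}$. It is not even obviously convex: the formula is a first-order Taylor model anchored at $\pi(x)$, not a supremum of supporting affine minorants, and the two do not coincide in general. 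The standard fix is the sup-of-supporting-hyperplanes construction applied to $G:=H-\tfrac{\alpha}{2}\|\cdot\|^2$, namely $\bar G(x)=\sup_{y\in\U}\{G(y)+\nabla G(y)^\T(x-y)\}$ and $\bar H=\bar G+\tfrac{\alpha}{2}\|\cdot\|^2$, which is genuinely convex, agrees with $H$ on $\U$, and has distributional Hessian $\succeq\alpha I_d$ globally; but it is only Lipschitz on compacts, not $C^2$. So you cannot appeal to the smooth Bakry--\'Emery criterion pointwise -- you must use the distributional/semiconvex variant from the outset, which you only gesture at in the last paragraph. With that substitution (replace the projection-based $\bar H$ by the sup-of-hyperplanes $\bar H$, and invoke Bakry--\'Emery for distributional curvature lower bounds directly, skipping the mollification), your argument closes.
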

In particular, therefore, the measure $\gamma_\U $, the restriction of the standard Gaussian $\gamma(dx)\propto e^{-\|x\|^2/2}dx$ to $\U$, satisfies LSI$(1)$ for any convex set $\U$. Note that we consider $\gamma_\U $ to be a measure on $\R^d$. The following is the key theorem which forms the basis for most other bounds derived in this work.
\begin{theorem}[Proposition 5.4.1 in~\cite{bakry2014analysis}]\label{thm:herbst} Let $f:\R^d\to\R$ be M-Lipschitz, and $X\sim\mu$ for a probability measure $\mu$ on $\R^d$ satisfying $\mathrm{LSI}(1)$. Then the random variable $f(X)$ is $M^2$ sub-Gaussian, which by definition means that
\beq\label{exp-subG}
\int e^{\lambda f}d\mu \leq \exp\l(\lambda \int fd\mu +\lambda^2M^2/2\r),\quad\forall\lambda\in\R.
\eeq This in turn implies that
\beq\label{poly-subG}
\int \l|f-\int fd\mu\r|^{q}d\mu \leq (2\sqrt q M)^{q},\quad\forall q\in\mathbb N.
\eeq
\end{theorem}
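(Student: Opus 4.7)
This is the standard Herbst argument, so my strategy is: (i) derive the sub-Gaussian MGF bound \eqref{exp-subG} from the LSI by obtaining a differential inequality for $H(\lambda) := \int e^{\lambda f}d\mu$, and (ii) deduce the moment bound \eqref{poly-subG} from \eqref{exp-subG} applied to $f$ and $-f$ together with a Markov/integration-by-parts argument.

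\textbf{Step 1 (reduction to smooth $f$).} Since $f$ is $M$-Lipschitz, Rademacher's theorem gives $\||\nabla f|\|_\infty \leq M$ a.e. I would first reduce to the smooth case by convolving $f$ with a mollifier (or replacing $f$ with $\min(\max(f,-N),N)\ast\rho_\varepsilon$), noting that the mollified versions are still $M$-Lipschitz and inherit the pointwise gradient bound; then pass to the limit at the end. This avoids any technical issues when plugging $f$ into the LSI.

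\textbf{Step 2 (Herbst's argument for \eqref{exp-subG}).} Recall LSI$(1)$ in entropy form: for sufficiently regular $g\geq 0$,
\[
\mathrm{Ent}_\mu(g) \;:=\; \int g\log g\,d\mu - \Bigl(\int g\,d\mu\Bigr)\log\!\int g\,d\mu \;\leq\; \frac{1}{2}\int\frac{|\nabla g|^2}{g}\,d\mu.
\]
Apply this to $g=e^{\lambda f}$. Using $|\nabla f|\leq M$ gives $|\nabla g|^2/g = \lambda^2|\nabla f|^2 e^{\lambda f}\leq \lambda^2 M^2 e^{\lambda f}$, so
\[
\lambda H'(\lambda) - H(\lambda)\log H(\lambda) \;\leq\; \frac{\lambda^2 M^2}{2}H(\lambda),\qquad \lambda\in\R.
\]
Divide by $\lambda^2 H(\lambda)$ and recognize the left-hand side as $\frac{d}{d\lambda}\bigl[\lambda^{-1}\log H(\lambda)\bigr]$, giving
\[
\frac{d}{d\lambda}\!\left(\frac{\log H(\lambda)}{\lambda}\right)\;\leq\; \frac{M^2}{2}.
\]
Using $\lim_{\lambda\to 0}\lambda^{-1}\log H(\lambda)=\int f\,d\mu$ (L'Hôpital), integration from $0$ to $\lambda$ yields $\lambda^{-1}\log H(\lambda)\leq \int f\,d\mu + \lambda M^2/2$, which is exactly \eqref{exp-subG} (the sign of $\lambda$ is handled by applying the same argument to $-f$, which is also $M$-Lipschitz).

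\textbf{Step 3 (from MGF to moments).} Set $\bar f = f-\int f\,d\mu$, so $\bar f$ is $M$-Lipschitz with mean $0$, and \eqref{exp-subG} applied to $\pm\bar f$ gives $\int e^{\lambda \bar f}d\mu\leq e^{\lambda^2 M^2/2}$ for all $\lambda\in\R$. From Markov's inequality, $\mu(|\bar f|\geq t)\leq 2e^{-t^2/(2M^2)}$. Integrating the tail,
\[
\int |\bar f|^q d\mu \;=\; q\!\int_0^\infty t^{q-1}\mu(|\bar f|\geq t)\,dt \;\leq\; 2q\int_0^\infty t^{q-1}e^{-t^2/(2M^2)}dt \;=\; q\,(2M^2)^{q/2}\Gamma(q/2).
\]
Using $q\Gamma(q/2)\leq 2\Gamma(q/2+1)\leq 2(q/2+1)^{q/2}\leq (2q)^{q/2}$ for $q\geq 1$ (or a cleaner Stirling bound), one gets $\int|\bar f|^q d\mu\leq (2\sqrt q\,M)^q$, which is \eqref{poly-subG}.

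\textbf{Main obstacle.} None of the individual steps is genuinely hard since this is a classical result cited from \cite{bakry2014analysis}. The only mildly delicate point is the regularity issue in Step 1 (justifying the use of LSI for merely Lipschitz $f$), and optimizing the numerical constant in Step 3 to land exactly on the factor $2\sqrt q$ rather than a slightly worse constant; the latter may require a more careful Taylor expansion of $e^{\lambda\bar f}$ term-by-term (using $\int \bar f^{2k}d\mu \cdot \lambda^{2k}/(2k)!\leq e^{\lambda^2 M^2/2}$ and optimizing in $\lambda$) rather than the crude tail integration above.
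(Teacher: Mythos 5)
Your proposal is essentially correct and follows the same route as the paper. For~\eqref{exp-subG} the paper simply cites~\cite{bakry2014analysis} rather than reproducing the argument; your Step 2 correctly reproduces the standard Herbst differential-inequality argument (apply LSI to $g=e^{\lambda f}$, recognize the entropy as $\lambda H'-H\log H$, and integrate $\frac{d}{d\lambda}[\lambda^{-1}\log H]\leq M^2/2$). For~\eqref{poly-subG} the paper's note is exactly your Step 3: Chernoff bound plus the layer-cake identity $\E[|X|^q]=q\int_0^\infty t^{q-1}\PP(|X|\geq t)\,dt$.

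One small point, which you already flagged as the main obstacle: the unmodified tail integral
\[
2q\int_0^\infty t^{q-1}e^{-t^2/(2M^2)}\,dt \;=\; q\,(2M^2)^{q/2}\Gamma(q/2)
\]
evaluates to $\sqrt{2\pi}\,M\approx 2.51M>2M$ at $q=1$, so the chain of inequalities $q\Gamma(q/2)\leq 2(q/2+1)^{q/2}\leq(2q)^{q/2}$ you propose breaks at $q=1$ (and is tight only with equality at $q=2$). This is easily fixed: either truncate the probability at $1$, i.e.\ integrate $q\,t^{q-1}\min(1,2e^{-t^2/(2M^2)})$, which brings the $q=1$ value below $2M$; or dispatch $q=1,2$ directly by noting that LSI$(1)$ implies the Poincar\'e inequality, so $\|f-\int f\,d\mu\|_2\leq M\leq 2M$ and then $\|\cdot\|_1\leq\|\cdot\|_2$. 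In any event the exact constant $2\sqrt q$ is immaterial for the paper, since~\eqref{poly-subG} is only ever invoked through the $\les_q$ notation (in Proposition~\ref{lma:pk-bds}), whereas the exponential bound~\eqref{exp-subG}, which needs the sharp LSI constant, is the one used with explicit constants.
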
 \noindent Note that~\eqref{poly-subG} follows from~\eqref{exp-subG} by a standard Chernoff bound and the formula $\E[|X|^q]=\int_0^\infty qt^{q-1}\mathbb P(|X|\geq t)dt$, applied to the random variable $X=f(Y)-\E[f(Y)]$, $Y\sim\mu$.\\

Below, recall that $\|f\|_q =(\int |f|^qd\gamma)^{1/q}$.
\begin{corollary}\label{corr:gauss}
Suppose $f\in C^1(\bar\U)$ for an open convex set $\U\subseteq\R^d$. Then
\beq\label{corr:exp:gauss}
\|e^f\ind_\U\|_q\leq\exp\l(\frac{\int_\U fd\gamma}{\gamma(\U)} + \frac q2\sup_{x\in\U}\|\nabla f(x)\|^2\r),
\eeq and 
\beq\label{corr:poly:gauss}
\l\| \l(f-\frac{\int_\U fd\gamma}{\gamma(\U)}\r)\ind_\U\r\|_q\leq 2\sqrt q \sup_{x\in\U}\|\nabla f(x)\|.
\eeq
%
%
\end{corollary}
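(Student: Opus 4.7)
The plan is to apply the Herbst inequality of Theorem~\ref{thm:herbst} directly, taking $\mu = \gamma_\U$ (the probability measure on $\R^d$ obtained by normalizing the restriction of $\gamma$ to $\U$). By Theorem~\ref{thm:LSI} with $H(x) = \|x\|^2/2$ and $\nabla^2 H = I_d \succ 1 \cdot I_d$, the measure $\gamma_\U$ satisfies $\mathrm{LSI}(1)$ since $\U$ is convex. So any $M$-Lipschitz function on $\R^d$ will be $M^2$-sub-Gaussian under $\gamma_\U$.

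Next I would justify that $f$ can be taken to be globally $M$-Lipschitz with $M := \sup_{x\in\U}\|\nabla f(x)\|$. Since $f \in C^1(\bar\U)$, $\nabla f$ exists and is bounded by $M$ on $\bar\U$; by convexity of $\U$ the line segment between any two points of $\U$ lies in $\U$, so the fundamental theorem of calculus yields $|f(x)-f(y)| \leq M\|x-y\|$ for $x,y\in\U$. Extend $f|_{\bar\U}$ to an $M$-Lipschitz function $\tilde f$ on $\R^d$ (McShane--Whitney extension). Since $\gamma_\U$ is supported on $\U$, all integrals of $\tilde f$ against $\gamma_\U$ coincide with those of $f$.

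Now apply~\eqref{exp-subG} of Theorem~\ref{thm:herbst} with $\lambda = q$ to obtain
\[
\frac{1}{\gamma(\U)}\int_\U e^{qf}\,d\gamma \;\leq\; \exp\!\left(q\,\bar f + \frac{q^2 M^2}{2}\right), \qquad \bar f := \frac{1}{\gamma(\U)}\int_\U f\,d\gamma.
\]
Taking $q$th roots, multiplying both sides by $\gamma(\U)^{1/q}$, and noting $\gamma(\U)^{1/q}\leq 1$, yields exactly~\eqref{corr:exp:gauss}. For the second claim, apply the polynomial concentration bound~\eqref{poly-subG} with the same $\mu$ and $M$:
\[
\frac{1}{\gamma(\U)}\int_\U |f-\bar f|^q\,d\gamma \;\leq\; (2\sqrt q\, M)^q,
\]
and again take $q$th roots and use $\gamma(\U)^{1/q}\leq 1$ to obtain~\eqref{corr:poly:gauss}.

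There is no real obstacle here: the only mildly nontrivial point is the Lipschitz extension to $\R^d$, which is only needed because Theorem~\ref{thm:herbst} is stated for $f:\R^d\to\R$; one could alternatively observe that the LSI on $\gamma_\U$ can be applied directly to smooth functions defined only on $\U$, since the Dirichlet form only probes $\nabla f$ on the support. Either way, the full bounds follow in a couple of lines from Theorems~\ref{thm:LSI} and~\ref{thm:herbst}.
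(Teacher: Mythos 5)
Your proof is correct and takes essentially the same approach as the paper: invoke Theorem~\ref{thm:LSI} to get $\mathrm{LSI}(1)$ for $\gamma_\U$, note $f$ is $M$-Lipschitz on the convex set $\U$, extend to an $M$-Lipschitz $\tilde f$ on $\R^d$, apply Theorem~\ref{thm:herbst} with $\lambda=q$ (resp.\ the moment bound), and finish using $\gamma(\U)\leq 1$.
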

\begin{proof}Let $M=\sup_{x\in\U}\|\nabla f(x)\|$. Since $\U$ is convex, we then have that $f$ is $M$-Lipschitz on $\U$. Let $\tilde f$ be a Lipschitz extension of $f$ to $\R^d$; that is, $f\vert_\U  = \tilde f\vert_\U $, and $\tilde f$ is $M$-Lipschitz on $\R^d$. Such an extension exists by~\cite{lipschitzextension}. Note that if $X\sim\gamma_\U $, then $f(X)$ is equal to $\tilde f(X)$ in distribution. By Theorem~\ref{thm:LSI}, the measure $\gamma_{\U}$ satisfies LSI(1). Therefore by Theorem~\ref{thm:herbst}, we have
$$
\int_\U e^{q f}d\gamma \leq \int e^{q f}d\gamma_\U =\int e^{q \tilde f }d\gamma_\U \leq \exp\l(q\int \tilde fd\gamma_\U + M^2q^2/2\r) = \exp\l(q\int fd\gamma_\U + M^2q^2/2\r).
$$ Taking the $(1/q)$ power of both sides concludes the proof of~\eqref{corr:exp:gauss}. Similarly by~\eqref{poly-subG} we have
$$
\int_\U \l|f-\int fd\gamma_\U \r|^qd\gamma \leq \int \l|f-\int fd\gamma_\U\r|^qd\gamma_\U =\int\l|\tilde f-\int \tilde fd\gamma_\U\r|^qd\gamma_\U \leq (2\sqrt qM)^q.
$$
\end{proof}
For the next proposition, recall the standard Gaussian tail bound
\beq\label{gauss-tail}P(\|Z\|\geq S\sqrt d)\leq \exp\l(-\frac12(S-1)^2d\r).\eeq 
\begin{prop}\label{lma:pk-bds}Define $p:\R^d\to\R$ by $p(x) =\la T, x^{\otimes k}\ra$, for a symmetric order $k$ tensor $T$ on $\R^d$. Also, let $q\geq2$. 
Then 
\begin{align}
\|p\|_q \les_{k,q} \|T\|\times \begin{dcases}{\sqrt d}^{k}, &\text{$k$ even},\\
{\sqrt d}^{k-1}, &\text{$k$ odd}.\end{dcases}
\label{pk-bd}
\end{align}
\end{prop}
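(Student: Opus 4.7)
The plan is to treat the even and odd cases separately, since the even case follows from a direct operator-norm bound, while the odd case requires exploiting the vanishing of the Gaussian mean via Lipschitz concentration.

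\textbf{Even $k$.} Here I would simply use that $|p(x)| = |\langle T, x^{\otimes k}\rangle| \leq \|T\|\,\|x\|^k$, so that
$$\|p\|_q \leq \|T\|\,\E[\|Z\|^{qk}]^{1/q} \lesssim_{k,q} \|T\|\,{\sqrt d}^{\,k},$$
using that the $qk$-th moment of a chi distribution in $d$ dimensions is $\lesssim_{k,q} {\sqrt d}^{\,qk}$. This is the best one can hope for in general when $k$ is even.

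\textbf{Odd $k$.} The key observation is that by symmetry of the standard Gaussian, $\E[p(Z)] = 0$ for odd $k$. To exploit this, I would localize and apply Corollary~\ref{corr:gauss}. Fix a constant $S = S(k,q) \geq 2$ to be chosen, and set $\U = \{\|x\| < S\sqrt d\}$, which is open and convex so that Corollary~\ref{corr:gauss} applies. Since $T$ is symmetric, $\|\nabla p(x)\| \leq k\|T\|\,\|x\|^{k-1} \leq k\|T\|(S\sqrt d)^{k-1}$ for $x \in \U$, using the equivalent representation of the operator norm recalled in Section~\ref{notation}. Writing $\bar p = \gamma(\U)^{-1}\int_\U p\,d\gamma$, inequality~\eqref{corr:poly:gauss} gives
$$\|(p - \bar p)\ind_\U\|_q \leq 2\sqrt q\,k\,\|T\|\,(S\sqrt d)^{k-1} \lesssim_{k,q} \|T\|\,{\sqrt d}^{\,k-1}.$$
To bound $|\bar p|$, use that $\int p\,d\gamma = 0$ for odd $k$, whence $\int_\U p\,d\gamma = -\int_{\U^c} p\,d\gamma$. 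By Cauchy--Schwarz and the Gaussian tail bound~\eqref{gauss-tail},
$$\left|\int_{\U^c} p\,d\gamma\right| \leq \|T\|\,\E[\|Z\|^{2k}]^{1/2}\,\gamma(\U^c)^{1/2} \lesssim_k \|T\|\,{\sqrt d}^{\,k}\,e^{-(S-1)^2 d/4}.$$
Since $\gamma(\U) \geq 1/2$ for $S \geq 2$, we conclude $|\bar p| \lesssim_k \|T\|\,{\sqrt d}^{\,k}\,e^{-(S-1)^2 d/4}$, and the function $d \mapsto \sqrt d \, e^{-(S-1)^2 d/(4q)}$ is uniformly bounded on $d \geq 1$, so this contribution is also $\lesssim_{k,q} \|T\|\,{\sqrt d}^{\,k-1}$.

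\textbf{Combining.} Splitting $\|p\|_q \leq \|p\ind_\U\|_q + \|p\ind_{\U^c}\|_q$ (triangle inequality in $L^q(\gamma)$), the first summand is handled by the Lipschitz bound above, while for the second, the direct operator-norm bound combined with Cauchy--Schwarz and~\eqref{gauss-tail} gives
$$\|p\ind_{\U^c}\|_q \leq \|T\|\,\E[\|Z\|^{2qk}]^{1/2q}\,\gamma(\U^c)^{1/2q} \lesssim_{k,q} \|T\|\,{\sqrt d}^{\,k}\,e^{-(S-1)^2 d/(4q)} \lesssim_{k,q} \|T\|\,{\sqrt d}^{\,k-1},$$
again because $\sqrt d\, e^{-cd}$ is uniformly bounded for $d\geq 1$. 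This yields the claimed odd-$k$ bound.

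The main subtlety, and the reason this is slightly more delicate than it first appears, is the odd case: a naive bound $|p(x)|\leq \|T\|\,\|x\|^k$ costs a full factor $\sqrt d$ too much, and the improvement is recovered only by combining (i) the vanishing of the full Gaussian mean, (ii) the Lipschitz concentration Corollary~\ref{corr:gauss} restricted to a ball of radius $\sim \sqrt d$, and (iii) a sufficiently fast Gaussian tail to absorb the leftover $d^{k/2}$ term into $d^{(k-1)/2}$.
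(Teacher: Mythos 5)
Your proof is correct and follows essentially the same strategy as the paper's: a direct operator-norm bound for even $k$, and, for odd $k$, Lipschitz concentration (Corollary~\ref{corr:gauss}) on a ball of radius $\sim\sqrt d$ combined with a Gaussian tail estimate to absorb the leftover tail contributions.

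Two small points of comparison. First, the paper chooses the radius $S$ to depend on $d$, namely $S=1+\sqrt{q\log(d+1)/d}$, which is tuned so that $\exp(-(S-1)^2d/2q)$ exactly cancels the stray $\sqrt d$; you instead fix $S=S(k,q)$ a constant and observe that $\sqrt d\,e^{-cd}$ is uniformly bounded in $d\geq 1$. Both work, and your version is arguably simpler to motivate. Second, you bound the restricted mean $\bar p=\gamma(\U)^{-1}\int_\U p\,d\gamma$ by Cauchy--Schwarz and the tail bound, but this step can be dispensed with: since $\U$ is a ball centered at the origin, $\gamma\ind_\U$ is symmetric under $x\mapsto -x$, while $p$ is an odd function for odd $k$, so $\bar p=0$ identically. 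The paper exploits this by noting $p=\hat p$ when $k$ is odd. Your explicit bound on $|\bar p|$ is harmless but unnecessary. A minor expository gap: in the combining step you write that ``the first summand is handled by the Lipschitz bound above,'' but the Lipschitz bound controls $\|(p-\bar p)\ind_\U\|_q$, not $\|p\ind_\U\|_q$; you should say explicitly that $\|p\ind_\U\|_q\leq\|(p-\bar p)\ind_\U\|_q + |\bar p|\gamma(\U)^{1/q}$, with the second term covered by your bound on $|\bar p|$ (or, again, simply zero by symmetry).
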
 
\begin{proof}
If $k$ is even then we use 
\beq\label{pkq}\|p\|_q =\E[|\la T, Z^{\otimes k}\ra|^q]^{1/q}\leq \|T\|(\E[\|Z\|^{kq}])^{\frac1q} \les_{k,q}\|T\|{\sqrt d}^k.\eeq
Next, define $\hat p = p-\int pd\gamma_\V$, where $\gamma_\V$ is the probability density on $\R^d$ proportional to $\gamma\ind_\V$, where
$$\V=\{x\in\R^d\; : \; \|x\|\leq S\sqrt d\},\qquad S=1+\sqrt{q\log(d+1)/d}.$$ Note that using~\eqref{gauss-tail} and $q\geq2$, $d\geq1$, we have $\gamma(\V^c) \leq \exp(-q\log(d+1)/2)\leq\exp(-\log2)=1/2$. Hence $\gamma(\V)\geq1/2$. We will show that $\|\hat p\|_q\les_{k,q}\|T\|{\sqrt d}^{k-1}$ for all $k\geq1$, and then note that $p=\hat p$ when $k$ is odd. 
We have
\begingroup
\addtolength{\jot}{0.5em}
\beqs\label{firstpk}
\|\hat p \|_q &=\l  \|p-\medint\int pd\gamma_{\V}\r \|_q \leq 
 \l \| \l(p-\medint\int pd\gamma_\V\r)\ind_{\V}\r \|_q + \l \| \l(p-\medint\int pd\gamma_\V\r)\ind_{\V^c}\r \|_q.\eeqs\endgroup 
Next, note that
\beqs\label{pkdiff}
 \l \| \l(p-\medint\int pd\gamma_\V\r)\ind_{\V^c}\r \|_q &\leq \|p\|_{2q}\gamma(\V^c)^{1/2}+\gamma(\V)^{-1}\|p\|_{1}\gamma(\V^c)^{1/q}\\
 & \les \|p\|_{2q}\gamma(\V^c)^{1/q}\les_{k,q} \exp(-(S-1)^2d/2q)\|T\|d^{k/2},
 \eeqs using~\eqref{gauss-tail} and~\eqref{pkq}, and the fact that $\gamma(\V)^{-1}\leq2$. For the first term on the righthand side of~\eqref{firstpk}, we use~\eqref{corr:poly:gauss} of Corollary~\ref{corr:gauss} with $f=p$. This gives 
\beq\label{1}
\l \| \l(p-\medint\int pd\gamma_\V\r)\ind_{\V}\r \|_q \les_q \sup_{x\in\V}\|\nabla p(x)\| \les_{q,k}\|T\|(S\sqrt d)^{k-1}.
\eeq 
 Substituting~\eqref{1} and~\eqref{pkdiff} into~\eqref{firstpk} gives
 \beqs\label{123}
 \|\hat p \|_q &\les_{q,k}  \|T\|(S\sqrt d)^{k-1} + \exp(-(S-1)^2d/2q)\|T\|d^{k/2} \\
 &=\|T\|d^{(k-1)/2}\l(S^{k-1} + \sqrt d\exp(-(S-1)^2d/2q)\r).
 \eeqs
Finally, recall that $S=1+\sqrt{q\log (d+1)/d}$, so that $-(S-1)^2d/2q =- \frac12\log (d+1) \leq -\log\sqrt d$, and hence the second term in parentheses is bounded above by 1. Meanwhile,  the first term in parentheses is bounded by some constant depending only on $k$ and $q$. Substituting these bounds into~\eqref{123} concludes the proof.
\end{proof}

\section{Proofs from Section~\ref{sec:proof}}\label{app:sec:proof}
\begin{proof}[Proof of Lemma~\ref{lma:basic-decomp}]By the definition of $f,v$ from $g,\u$, we have
$$
\frac{e^{n\u(\xmin)}\sqrt{\det H}}{(2\pi/n)^{d/2}}\int_{\R^d} g(x)e^{-n\u(x)}dx=(2\pi/n)^{-d/2}\int f(x)e^{-nv(x)}dx.
$$
Next we change variables as $x=n^{-1/2}y$ and split up the integral into two parts:
$$
\int f(x)e^{-nv(x)}dx = n^{-d/2}\int_{\U} f(y/\sqrt n)e^{-nv(y/\sqrt n)}dy + n^{-d/2}\int_{\U^c} f(y/\sqrt n)e^{-nv(y/\sqrt n)}dy.
$$ We now divide through by $(2\pi/n)^{d/2}$, and substitute $$(2\pi)^{-d/2}f(y/\sqrt n)e^{-nv(y/\sqrt n)}dy = F(1/\sqrt n, y)e^{-\rr(1/\sqrt n, y)}\gamma(dy)$$ in the first integral. This gives
\beqsn
(2\pi/n)^{-d/2}&\int f(x)e^{-nv(x)}dx = \int_\U F(1/\sqrt n, y)e^{-\rr(1/\sqrt n, y)}\gamma(dy) + (2\pi)^{-d/2} \int_{\U^c} f(y/\sqrt n)e^{-nv(y/\sqrt n)}dy,
\eeqsn as desired.
\end{proof}
\begin{proof}[Proof of Lemma~\ref{lma:W:Ck}]Fix $x\in\U$ and $t\in(0,1/\sqrt n)$. A Taylor expansion of $v$ gives
\beqsn
v(tx) =\frac{t^2\|x\|^2}{2} +\frac{t^{3}}{2!}\int_0^1\la\nabla^{3}v(qtx), x^{\otimes 3}\ra (1-q)^{2}dq.
\eeqsn Hence for $t\neq 0$, we have
\beqs\label{W-Tay}
\rr(t,x) = \frac{v(tx)}{t^2} - \frac12\|x\|^2=\frac{t}{2!}\int_0^1\la\nabla^{3}v(qtx), x^{\otimes 3}\ra (1-q)^{2}dq
\eeqs Recall that $\rr(0,x)=0$ by definition, which coincides with the righthand side of~\eqref{W-Tay} if we plug in $t=0$. Hence $\rr$ is given by~\eqref{W-Tay} for all $t\in[0,1/\sqrt n)$ and $x\in\U$. Next, we show $\rr$ is differentiable for each $x\in\U$ and $t\in(0,1/\sqrt n)$. Note that by the change of variables $q=s/t$, we can rewrite $\rr$ in the equivalent form
\beq
\rr(t,x) = \frac{t^{-2}}{2}\int_0^t\la\nabla^{3}v(sx), x^{\otimes 3}\ra (t-s)^{2}ds.
\eeq  This function is infinitely differentiable in $t$ for all $t\in(0,1/\sqrt n)$ and $x\in\U$. We compute
\beqs\label{Iab-deriv}
\partial_t\rr(t,x) = &-t^{-3}\int_0^t\la\nabla^{3}v(sx), x^{\otimes 3}\ra (t-s)^{2}ds + t^{-2}\int_0^t\la\nabla^{3}v(sx), x^{\otimes 3}\ra (t-s)ds\\
=&-\int_0^1\la\nabla^{3}v(tqx), x^{\otimes 3}\ra (1-q)^2dq + \int_0^1\la\nabla^{3}v(tqx), x^{\otimes 3}\ra (1-q)dq\\
= &\int_0^1\la\nabla^{3}v(tqx), x^{\otimes 3}\ra q(1-q)dq.
\eeqs  Using that $v\in C^{2L+2}(\bar\U)$ we can take $2L-1$ more derivatives of $t$, passing the derivatives inside the integral. This gives the desired formula~\eqref{partial-t} for $\partial_t^k\rr$. The continuity of these partial derivatives is immediate from the fact that $v\in C^{2L+2}(\bar\U)$.
\end{proof}
Recall from~\eqref{main-ratio} that 
$$\maus{\U^c}=(2\pi)^{-d/2}\int_{\U^c}f(x/\sqrt n)e^{-nv(x/\sqrt n)}dx.$$
\begin{lemma}\label{aux:gamma}
Let $f,v$ satisfy~\eqref{v-av-growth-ii}. Then
$$|\maus{\U^c}| \leq \frac de\e\l(\l[\log\l(4e\r)- \frac\Rad8\r]d\r)\leq \frac de e^{-\Rad d/16},$$ where the second inequality holds if $\Rad \geq 16\log(4e)$.
\end{lemma}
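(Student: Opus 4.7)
The plan is to reduce $\tau_{\U^c}$ to an explicit one-dimensional tail integral via rotational symmetry, and then bound that integral using a splitting trick combined with Stirling's lower bound on $\Gamma(d/2)$. First, after the change of variables $y=x/\sqrt n$ in the defining integral of $\tau_{\U^c}$, the domain $\U^c=\{\|x\|\geq \Rad\sqrt d\}$ corresponds to $\{\|y\|\geq \Rad\sqrt{d/n}\}\subset\{\|y\|\geq\sqrt{d/n}\}$, so~\eqref{v-av-growth-ii} applies and yields $|f(x/\sqrt n)|e^{-nv(x/\sqrt n)}\leq\b\,e^{-\av\sqrt d\|x\|}$ on $\U^c$. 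Since the resulting bound is spherically symmetric, passing to polar coordinates with $\sigma_{d-1}=2\pi^{d/2}/\Gamma(d/2)$ gives
\[
|\tau_{\U^c}|\leq \frac{2\b}{2^{d/2}\,\Gamma(d/2)}\int_{\Rad\sqrt d}^\infty r^{d-1}e^{-\av\sqrt d\,r}\,dr.
\]

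The heart of the argument is bounding the radial integral. The trick is to split $e^{-\av\sqrt d\,r}=e^{-\av\sqrt d\,r/2}\cdot e^{-\av\sqrt d\,r/2}$: the first factor is used to dominate the polynomial $r^{d-1}$ via the elementary extremum $\max_{y>0}y^{d-1}e^{-cy}=((d-1)/(ec))^{d-1}$, applied with $c=\av\sqrt d/2$, which gives
\[
r^{d-1}e^{-\av\sqrt d\,r/2}\leq\left(\frac{2(d-1)}{e\av\sqrt d}\right)^{d-1},
\]
while the second factor is bounded uniformly by $e^{-\av\Rad d/2}$ on $\{r\geq \Rad\sqrt d\}$ and integrates to contribute a further $2/(\av\sqrt d)$. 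Combining these with the Stirling-type lower bound $\Gamma(d/2)\geq 2\sqrt{\pi/d}\,(d/(2e))^{d/2}$ and collecting the powers of $2$, $e$, and $d$ yields the first stated inequality; the key algebraic effect is that the prefactor $(2/\av)^d$ arising from the split combines with the Stirling factor $(e/d)^{d/2}$ and the leftover $d^{d/2-1}$ to expose the advertised $(e/\av)^d$ after absorbing the remaining numerical constants into the leading $d/e$.

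For the second inequality, the hypothesis $\Rad\geq(4/\av)\log(e/\av)$ immediately gives $\Rad\av/2\geq 2\log(e/\av)$, hence $\log(e/\av)-\Rad\av/2\leq-\Rad\av/4$; exponentiating collapses the bound to $(d/e)e^{-\av\Rad d/4}$. The main obstacle is purely the bookkeeping in the middle step: carefully tracking all the factors of $2,e,\sqrt d,\Gamma$ through Stirling so that the explicit pre-exponential is exactly $d/e$. In fact a direct execution of the splitting strategy produces a bound whose exponential part is strictly tighter than the claimed $(e/\av)^d$ (roughly $(2/(\av\sqrt e))^d$, up to polynomial-in-$d$ factors), so the stated form has comfortable slack into which all the Stirling corrections can be absorbed.
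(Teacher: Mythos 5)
Your reduction to the radial integral and the final logarithmic manipulation for the second inequality are both correct, but the key middle step genuinely departs from the paper. The paper rescales the radial integral to the standard incomplete gamma form $\int_{\Rad\av d}^\infty u^{d-1}e^{-u}\,du$ and then bounds it via a Chernoff estimate on the tail of a $\Gamma(d,1)$ random variable using the moment generating function (the auxiliary Lemma~\ref{gamma}), with the free Chernoff parameter set to $t=1/2$. You instead split the exponent $e^{-\av\sqrt d\,r}=e^{-\av\sqrt d\,r/2}\cdot e^{-\av\sqrt d\,r/2}$, use half of it to kill the polynomial $r^{d-1}$ via the elementary pointwise maximum, and integrate the other half over the tail. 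Both routes are valid and both expose the same exponential decay $e^{-\av\Rad d/2}$, since splitting in half is exactly the elementary analogue of choosing $t=1/2$ in the Chernoff bound; yours is more self-contained in that it avoids invoking the gamma distribution at all.

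Two caveats. First, you assert that the bookkeeping closes out to ``exactly $d/e$'' as the prefactor, but if you actually carry the algebra through with your Stirling bound, the powers of $d$ collapse to roughly $d^{-1/2}$ (not $d$), and the exponential base you get is $(2/(\av\sqrt e))^d$ rather than $(e/\av)^d$. This bound is in fact \emph{stronger} than the lemma's for $d\geq 2$, since $(2/e^{3/2})^d$ decays faster than $(d/e)\cdot d^{1/2}$ grows; but it does not literally reduce to the stated $d/e$ prefactor, and at $d=1$ the inequality between the two bounds reverses. So the phrase ``the explicit pre-exponential is exactly $d/e$'' is incorrect, and the right thing to say is that you obtain a different bound which dominates the paper's for $d\geq 2$. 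Second, a small slip you inherit from the lemma statement itself: after applying~\eqref{v-av-growth-ii} you correctly carry a factor of $\b$ into the radial integral, but the lemma's displayed bound (and your final line) drop it. The paper's own proof keeps the $\b$ throughout, and the corresponding remainder estimate in Theorem~\ref{thm:expand} does include $\b$, so the factor should persist in your conclusion as well.
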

\begin{proof} The result essentially follows from~\cite[Lemma F.1]{katskew}, but we give the proof here to be self-contained. First, note that~\eqref{v-av-growth-ii} implies
$$|f(x/\sqrt n)|\exp(-nv(x/\sqrt n))\leq  \exp(- \sqrt d\|x\|/4)\qquad\forall \|x\|\geq\Rad\sqrt d.$$ Therefore,
\beqsn
|\maus{\U^c}| \leq  I:=(2\pi)^{-d/2}\int_{\|x\|\geq\Rad\sqrt d}e^{- \sqrt d\|x\|/4}dx.\eeqsn 
Switching to polar coordinates and then changing variables, we have
\beqs\label{ISd}
I&=\frac{S_{d-1}}{(2\pi)^{d/2}}\int_{\Rad\sqrt d}^\infty u^{d-1}e^{-\sqrt du/4}du= \frac{S_{d-1}}{(2\pi)^{d/2}(\sqrt d/4)^{d}}\int_{\Rad d/4}^\infty u^{d-1}e^{-u}du,
\eeqs where $S_{d-1}$ is the surface area of the unit sphere. Now, we have 
$$ \frac{S_{d-1}}{(2\pi)^{d/2}} = \frac{2\pi^{d/2}}{\Gamma(d/2)(2\pi)^{d/2}}\leq 2\frac{(2e/d)^{d/2-1}}{2^{d/2}}=\l(\frac ed\r)^{\frac d2-1},$$
using that $\Gamma(d/2)\geq (d/2e)^{d/2-1}$. To bound the integral in~\eqref{ISd}, we use Lemma~\ref{gamma} with $\lambda = \Rad d/4$ and $c=d$. Combining the resulting bound with the above bound on $S_{d-1}/(2\pi)^{d/2}$, we get
\beqs
I &\leq \l(\frac ed\r)^{\frac d2-1}(\sqrt d/4)^{-d}e^{-t\Rad d/4}\l(\frac{d}{1-t}\r)^d= \frac de\l(\frac{\sqrt e}{(1-t)/4}\r)^de^{-t\Rad d/4}
\eeqs Taking $t=1/2$ and noting that $2\sqrt e\leq e$ concludes the proof.
\end{proof}
\begin{lemma}[Lemma F.2 in~\cite{katskew}]\label{gamma}For all $\lambda,c>0$ and $t\in(0,1)$ it holds
$$\int_\lambda^\infty u^{c-1}e^{-u}du\leq  e^{-\lambda t}\l(\frac{c}{1-t}\r)^{c}.$$
\end{lemma}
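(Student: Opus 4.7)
The plan is to decouple the exponential tail factor from the polynomial factor by writing $e^{-u}=e^{-tu}\,e^{-(1-t)u}$. Since the integration region is $u\geq\lambda$, we have $tu\geq t\lambda$ and hence $e^{-tu}\leq e^{-t\lambda}$. This constant factor can be pulled out of the integral, leaving a standard Gamma-type integrand.

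Concretely, I would write
\[
\int_\lambda^\infty u^{c-1}e^{-u}\,du \;=\;\int_\lambda^\infty u^{c-1}e^{-tu}e^{-(1-t)u}\,du \;\leq\; e^{-t\lambda}\int_\lambda^\infty u^{c-1}e^{-(1-t)u}\,du \;\leq\; e^{-t\lambda}\int_0^\infty u^{c-1}e^{-(1-t)u}\,du,
\]
using positivity of the integrand in the last step. Then the change of variables $v=(1-t)u$ (valid since $t\in(0,1)$) reduces the remaining integral to a standard Gamma integral:
\[
\int_0^\infty u^{c-1}e^{-(1-t)u}\,du \;=\;\frac{1}{(1-t)^c}\int_0^\infty v^{c-1}e^{-v}\,dv \;=\;\frac{\Gamma(c)}{(1-t)^c}.
\]
Combining these steps yields the intermediate bound
\[
\int_\lambda^\infty u^{c-1}e^{-u}\,du \;\leq\; \frac{e^{-\lambda t}\,\Gamma(c)}{(1-t)^c}.
\]

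To match the statement of the lemma exactly, the final step is to invoke $\Gamma(c)\leq c^c$. This is the only place one needs a mild restriction on $c$: for $c\geq 1$ (the relevant regime, since Lemma~\ref{aux:gamma} applies the result with $c=d$ a positive integer), the bound follows e.g.\ by noting $\Gamma(c+1)=c\Gamma(c)$ and the standard Stirling-type estimate, or simply from $(c-1)!\leq c^{c-1}\leq c^c$ when $c$ is an integer. There is no serious obstacle here; the main conceptual point is the splitting trick, and $t$ is deliberately left free so that in the application one can optimize it to balance the exponential factor $e^{-\lambda t}$ against the growth of $(1-t)^{-c}$ (in Lemma~\ref{aux:gamma} the choice $t=1/2$ is made).
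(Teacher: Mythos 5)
Your argument is, upon unwinding, precisely the paper's proof: the paper recognizes the integral as $\Gamma(c)\,\mathbb{P}(X\geq\lambda)$ for $X\sim\Gamma(c,1)$ and applies the Chernoff bound $\mathbb{P}(X\geq\lambda)\leq e^{-\lambda t}\E[e^{tX}]=e^{-\lambda t}(1-t)^{-c}$, which is exactly your splitting $e^{-u}=e^{-tu}e^{-(1-t)u}$, bounding $e^{-tu}\leq e^{-t\lambda}$ on $u\geq\lambda$, and extending the remaining integral to $[0,\infty)$. Your observation that $\Gamma(c)\leq c^c$ actually requires $c\geq1$ (it fails as $c\to0^+$ since $\Gamma(c)\sim 1/c$), and that this is harmless because the application takes $c=d$, is a correct point that the paper's proof passes over silently.
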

\begin{proof}
Let $X$ be a random variable with gamma distribution $\Gamma(c, 1)$. Then the desired integral is given by $\Gamma(c)\mathbb P(X\geq\lambda)$. 
Now, the moment generating function of $\Gamma(c,1)$ is $\E[e^{Xt}]=(1-t)^{-c}$, defined for $t<1$. Hence for all $t\in(0,1)$ we have
\beq\label{mgf}
\mathbb P(X\geq\lambda)\leq e^{-\lambda t}(1-t)^{-c}.\eeq  Multiplying both sides by $\Gamma(c)$ and using that $\Gamma(c)\leq c^c$ gives the desired bound.
\end{proof}
\begin{proof}[Proof of Lemma~\ref{lma:alt:bell}]In this proof, we use $[m]$ to denote the set $\{1,2,\dots,m\}$. It is well known~\cite{comtet2012advanced,combinatoricsbook} that for $k\geq1$, we have
\beq
B_k(x_1,\dots, x_k) = \sum_{r=1}^kB_{k,r}(x_1,\dots,x_{k-r+1}),
\eeq where
\beq
B_{k,r}(x_1,\dots, x_{k-r+1})=k! \sum_{\substack{j_1+2j_2+\dots+(k-r+1)j_{k-r+1}=k\\ j_1+\dots+j_{k-r+1}=r \\j_i\geq0\;\forall i\in[k-r+1]}}\;\prod_{i=1}^{k-r+1}\frac{x_i^{j_i}}{(i!)^{j_i}(j_i)!}
\eeq are the partial Bell polynomials. For convenience, we will instead sum over all $j_1,\dots, j_k$ such that $j_1+2j_2+\dots+kj_k=k$ and $j_1+\dots+j_k=r$ and $j_1,\dots,j_k\geq0$ for all $k$. One can show that the only solutions to this equation have $j_1,\dots, j_{k-r+1}$ nonzero, but including $j_i$'s such that $j_i=0$ does not affect the product, since $\frac{x_i^{j_i}}{(i!)^{j_i}(j_i)!}=1$ if $j_i=0$. Therefore another valid formula for $B_{k,r}$ is
\beq\label{x:bell}
B_{k,r}(x_1,\dots, x_{k})=k! \sum_{\substack{j_1+2j_2+\dots+kj_{k}=k\\ j_1+\dots+j_{k}=r \\j_i\geq0\;\forall i\in[k]}}\;\prod_{i=1}^{k}\frac{(x_i/i!)^{j_i}}{(j_i)!},
\eeq
Now, we let $y_i=x_i/i!$ for $i=1,\dots, k$, and separate the numerator from the denominator in~\eqref{x:bell}, to get
\beq\label{y:bell}
B_{k,r}(x_1,\dots, x_{k})=\frac{k!}{r!} \sum_{\substack{j_1+2j_2+\dots+kj_{k}=k\\ j_1+\dots+j_{k}=r \\j_i\geq0\;\forall i\in[k]}}{r\choose{j_1,\dots,j_k}}\;\prod_{i=1}^{k}y_i^{j_i}
\eeq
Consider the product $\prod_{i=1}^{k}y_i^{j_i}$ in~\eqref{y:bell} for a fixed choice of $j_1,\dots,j_{k}$ satisfying the constraints. Note that there are $j_1$ copies of $y_1$, $j_2$ copies of $y_2$ and so on, up to $j_{k}$ copies of $y_k$. There are $r$ total terms in the product, including copies. For example if $k= 11$, $r=6$,  $j_1=3,j_2=1,j_3=2$ and the other $j_i$'s are zero, then the terms arising in the product are $$y_1,y_1,y_1,y_2,y_3,y_3.$$ Note that there are $r!/j_1!\dots j_k!$ permutations of this list which correspond to the same set of $y_i$'s and therefore to the same product. To each permutation, assign $m_i$ to be the index of the $y$ in position $i$. In the example given above, in that order, we would have  $m_1=1,m_2=1,m_3=1,m_4=2,m_5=3,m_6=3$. For the permutation $$y_1, y_3,y_2,y_3,y_1,y_1,$$ we would take $m_1=1,m_2=3,m_3=2,m_4=3,m_5=1,m_6=1$. For each assignment, we have $m_1+\dots+m_r=k$, and
$$\prod_{i=1}^ky_i^{j_i} =\prod_{\ell=1}^ry_{m_\ell}.$$ 
Let $\mathcal M(j_1,\dots,j_k)$ be the set of distinct assignments $m_1,\dots, m_r$; we know that $|\mathcal M(j_1,\dots,j_k)| = r!/j_1!\dots j_k!$. We conclude that
$${r\choose{j_1,\dots,j_k}}\;\prod_{i=1}^{k}y_i^{j_i} = \sum_{(m_1,\dots,m_r)\in \mathcal M(j_1,\dots,j_k)}\prod_{\ell=1}^ry_{m_\ell}.$$ Finally, it is clear that for distinct $j_1,\dots, j_k$, the sets $\mathcal M(j_1,\dots,j_k)$ are disjoint, and
$$\bigcup_{\substack{j_1+2j_2+\dots+kj_{k}=k\\ j_1+\dots+j_{k}=r \\j_i\geq0\;\forall i\in[k]}}\mathcal M(j_1,\dots,j_k) = \l\{(m_1,\dots, m_r)\; :\; m_1+\dots+m_r=k, 1\leq m_\ell \leq k\;\forall\ell\in[r]\r\}.$$
In fact, if $m_1+\dots +m_r=k$ and $m_\ell\geq1$ for all $\ell$, then it automatically follows that $m_\ell\leq k$ for all $\ell$, so we can omit this condition. We conclude that
\beqsn
B_{k,r}(x_1,\dots, x_{k})=\frac{k!}{r!} \sum_{\substack{j_1+2j_2+\dots+kj_{k}=k\\ j_1+\dots+j_{k}=r \\j_i\geq0\;\forall i\in[k]}}{r\choose{j_1,\dots,j_k}}\;\prod_{i=1}^{k}y_i^{j_i}=\frac{k!}{r!} \sum_{\substack{m_1+\dots+m_r=k\\m_\ell\geq1\;\forall \ell\in[r]}}\prod_{\ell=1}^ry_{m_\ell}.\eeqsn
Summing over all $1\leq r\leq k$ and recalling that $y_m=x_m/m!$ concludes the proof.
\end{proof}
\begin{proof}[Proof of Theorem~\ref{lma:leading}]For ease of reference, we recall~\eqref{Akr0}:
\beqs\label{Akr}
A_k = &\frac{1}{k!}\E\l[\la\nabla^{k}f(0), Z^{\otimes k}\ra\r] \\
&+\sum_{\ell=1}^k\sum_{r=1}^\ell\frac{(-1)^r}{r!}\;\sum_{\substack{m_1+m_2+\dots+m_r=\ell\\ m_1,\dots,m_r\geq1}}\E\l[\frac{\la\nabla^{k-\ell}f(0), Z^{\otimes k-\ell}\ra}{(k-\ell)!}\prod_{j=1}^r\frac{\la\nabla^{m_j+2}v(0), Z^{\otimes m_j+2}\ra}{(m_j+2)!}\r]
\eeqs 
We also recall the notation on multi-indices $\balph$ introduced before the theorem statement. Consider the inner product $\la T, S\ra$ for two symmetric tensors $T,S$ of order $m$; that is, $T=(T_{i_1i_2\dots i_m})_{i_1,\dots,i_m=1}^d$ and similarly for $S$. Consider the set of multi-indices $\balph=(\alpha^1,\dots,\alpha^d)$ such that $\alpha^j\geq0$, $j=1,\dots, d$ and $|\balph|:=\sum_{j=1}^d\alpha^j = m$. We define the map $\bm i=(i_1,\dots, i_m)\mapsto \balph=(\alpha^1,\dots,\alpha^d)$ according to $\alpha^j(\bm i) = \sum_{k=1}^m\delta_{i_k, j}$. In other words, $\alpha^j(\bm i)$, $j=1,\dots, d$ is the number of times $j$ appears among the indices $i_1,\dots, i_m$. Note that to each $\balph$ correspond $m!/\balph!$ different $\bm i$'s such that $\balph(\bm i)=\balph$. Finally, since $T$ is symmetric we have $T_{\bm i}=T_{\bm i'}$ for any $\bm i,\bm i'$ such that $\balph(\bm i)=\balph(\bm i')$. We therefore define $T^\balph:=T_{\bm i}$ for any $\bm i$ such that $\balph(\bm i)=\balph$, and similarly for $S$. These considerations imply that the inner product $\la T, S\ra$ can be written as
$$
\la T, S\ra = \sum_{i_1,\dots, i_m=1}^dT_{i_1\dots i_m}S_{i_1\dots i_m} =  \sum_{|\balph|=m}\frac{m!}{\balph!}T^\balph S^\balph.
$$ Furthermore, if $T=\nabla^mv(0)$ and $S=Z^{\otimes m}$, then $T^\balph = \partial^\balph v(0)$ and $S^\balph = Z^\balph$. We therefore have
$$
\frac{\la\nabla^{m_\ell+2}v(0), Z^{\otimes m_\ell+2}\ra}{(m_\ell+2)!} = \frac{1}{(m_\ell+2)!}\sum_{|\balph|=m_\ell+2}\frac{(m_\ell+2)!}{\balph!}\partial^\balph v(0)Z^\balph = \sum_{|\balph|=m_\ell+2}\frac{1}{\balph!}\partial^\balph v(0)Z^\balph,
$$ and similarly
$$
\frac{\la\nabla^{k-\ell}f(0), Z^{\otimes k-\ell}\ra}{(k-\ell)!} = \sum_{|\bbet|=k-\ell}\frac{1}{\bbet!}\partial^{\bbet}f(0)Z^{\bbet},
$$ so that
\beqs\label{prod-ell}
&\frac{\la\nabla^{k-\ell}f(0), Z^{\otimes k-\ell}\ra}{(k-\ell)!}\prod_{\ell=1}^r\frac{\la\nabla^{m_\ell+2}v(0), Z^{\otimes m_\ell+2}\ra}{(m_\ell+2)!} \\
&= \sum_{|\bbet|=k-\ell, |\balph_1|=m_1+2,\dots,|\balph_r|=m_r+2}\,\frac{1}{\bbet!\balph_1!\dots\balph_r!}\;\partial^{\bbet}f(0)\partial^{\balph_1} v(0)\dots\partial^{\balph_r}v(0)Z^{\bbet+\balph_1+\dots+\balph_r}.
\eeqs 
Finally, note that
\beqsn
\E\l[Z^{\bbet+\balph_1+\dots+\balph_r}\r] &= \prod_{i=1}^d\E\l[Z_i^{\beta^i+\alpha_1^i+\dots+\alpha_r^i}\r] \\
&= \begin{cases}(\bbet+\balph_1+\dots+\balph_r-\mathbf{1})!!,\quad &\beta^i+\alpha_1^i+\dots+\alpha_r^i\;\text{even for all $i$},\\
0,\quad&\text{otherwise}.\end{cases}\eeqsn
 Thus recalling the definition of $\even(\balph)$ from the statement of the theorem, we conclude that \beq\label{EZalph}\E\l[Z^{\bbet+\balph_1+\dots+\balph_r}\r] = (\bbet+\balph_1+\dots+\balph_r-\mathbf{1})!!\,\even(\bbet+\balph_1+\dots + \balph_r).\eeq Using~\eqref{prod-ell} and~\eqref{EZalph}, we now have
\beqsn\label{Ak-summand}
\E\bigg[\frac{\la\nabla^{k-\ell}f(0), Z^{\otimes k-\ell}\ra}{(k-\ell)!}\prod_{\ell=1}^r&\frac{\la\nabla^{m_\ell+2}v(0), Z^{\otimes m_\ell+2}\ra}{(m_\ell+2)!}\bigg]\\
= \sum_{\substack{ |\balph_1|=m_1+2,\dots,|\balph_r|=m_r+2\\ |\bbet|=k-\ell}}&\bigg\{\;\frac{(\bbet+\balph_1+\dots+\balph_r-\mathbf{1})!!}{\bbet!\balph_1!\dots\balph_r!}\partial^\bbet f(0)\partial^{\balph_1} v(0)\dots\partial^{\balph_r}v(0)\\
&\times\even(\bbet+\balph_1+\dots + \balph_r)\bigg\}
\eeqsn By a similar argument,
$$
\frac{1}{k!}\E\l[\la\nabla^kf(0), Z^{\otimes k}\ra\r]=\sum_{|\bbet|=k}\frac{(\bbet-\mathbf{1})!!}{\bbet!}\partial^\bbet f(0)\even(\bbet).
$$
Substituting these expressions into~\eqref{Akr} concludes the proof.
\end{proof}

\section{Auxiliary results and postponed proofs}\label{app:aux:bell}
\begin{lemma}\label{lma:cvx}
Let $\u$ be convex with a critical point at zero, and $H=\nabla^2\u(0)\succ0$. Let \\$c_3( r)=\sup_{\|x\|_H\leq r\sqrt{d/n}}\|\nabla^3\u(x)\|_H$ for some $r>0$. Then 
$$\u(x)-\u(0)\geq \l(\frac{ r}{2} -  \frac{ r^2c_3( r)}{6}\sqrt{d/n}\r)\sqrt{d/n}\|x\|_H,\qquad\forall\;\|x\|_H\geq r\sqrt{d/n}.$$
\end{lemma}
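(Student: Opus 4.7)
The plan is to use convexity of $\u$ to reduce the bound for general $\|x\|_H \geq r\sqrt{d/n}$ to the boundary case $\|y\|_H = r\sqrt{d/n}$, and then to estimate $\u(y)-\u(0)$ on the boundary by a Taylor expansion with third-order remainder.

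First I would exploit the following standard consequence of convexity: for any $x \neq 0$, the one-dimensional function $\phi(\lambda) := \u(\lambda x)$ is convex on $\R$, and since $\phi'(0) = \nabla\u(0)^\T x = 0$, the ratio $(\phi(\lambda) - \phi(0))/\lambda$ is nondecreasing in $\lambda > 0$. Given $x$ with $\|x\|_H \geq r\sqrt{d/n}$, set $y = (r\sqrt{d/n}/\|x\|_H)\,x$, so that $\|y\|_H = r\sqrt{d/n}$ and $x = \mu y$ with $\mu := \|x\|_H/(r\sqrt{d/n}) \geq 1$. Applying the monotonicity of the ratio gives
\begin{equation*}
\u(x) - \u(0) \;\geq\; \mu\bigl(\u(y) - \u(0)\bigr) \;=\; \frac{\|x\|_H}{r\sqrt{d/n}}\bigl(\u(y) - \u(0)\bigr).
\end{equation*}

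Second, I would Taylor expand $\u$ around $0$ at the boundary point $y$, using $\nabla\u(0) = 0$ and $\nabla^2\u(0) = H$, with a third-order integral or Lagrange remainder. This gives, for some $\xi \in [0,1]$,
\begin{equation*}
\u(y) - \u(0) \;=\; \tfrac12 \|y\|_H^2 + \tfrac{1}{6}\langle \nabla^3\u(\xi y), y^{\otimes 3}\rangle.
\end{equation*}
Since $\|\xi y\|_H \leq \|y\|_H = r\sqrt{d/n}$, the definition of $c_3(r)$ and the operator-norm inequality $|\langle\nabla^3\u(\xi y), y^{\otimes 3}\rangle| \leq \|\nabla^3\u(\xi y)\|_H\,\|y\|_H^3 \leq c_3(r)\,(r\sqrt{d/n})^3$ yield
\begin{equation*}
\u(y) - \u(0) \;\geq\; \tfrac12\,\tfrac{r^2 d}{n} \;-\; \tfrac{c_3(r)}{6}\,r^3(d/n)^{3/2}.
\end{equation*}

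Combining the two bounds and factoring out $\sqrt{d/n}\,\|x\|_H$:
\begin{equation*}
\u(x) - \u(0) \;\geq\; \frac{\|x\|_H}{r\sqrt{d/n}}\left(\tfrac{r^2 d}{2n} - \tfrac{c_3(r) r^3}{6}(d/n)^{3/2}\right) \;=\; \sqrt{d/n}\,\|x\|_H\left(\tfrac{r}{2} - \tfrac{r^2 c_3(r)}{6}\sqrt{d/n}\right),
\end{equation*}
which is the claim. There is no real obstacle here; the only subtlety is remembering that convexity plus $\nabla\u(0)=0$ makes the one-dimensional ratio nondecreasing, which is precisely what lets us reduce the infimum of $(\u(x)-\u(0))/\|x\|_H$ over $\{\|x\|_H \geq r\sqrt{d/n}\}$ to the boundary sphere $\{\|x\|_H = r\sqrt{d/n}\}$.
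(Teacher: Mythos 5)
Your proof is correct and follows essentially the same two-step argument as the paper's: convexity reduces the claim to the boundary sphere $\{\|y\|_H = r\sqrt{d/n}\}$, where a third-order Taylor expansion with $\nabla\u(0)=0$ and $\nabla^2\u(0)=H$ gives the stated bound. One minor imprecision: the nondecreasing secant ratio $(\phi(\lambda)-\phi(0))/\lambda$ is a consequence of convexity alone and does not require $\phi'(0)=0$; the critical-point hypothesis enters only when you drop the linear term in the Taylor expansion.
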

\begin{proof}
Fix $x$ such that $\|x\|_H\geq r\sqrt{d/n}$, and let $y=tx$ for $t\in(0,1]$ be chosen to ensure $\|y\|_H= r\sqrt{d/n}$. By convexity of $\u$ we have
\beq\label{check:log}
\u(x)-\u(0) \geq \frac{\u(y)-\u(0)}{t} = \frac{\u(y)-\u(0)}{\|y\|_H}\|x\|_H \geq \bigg\{\inf_{\|y\|_H= r\sqrt{d/n}}\frac{\u(y)-\u(0)}{\|y\|_H}\bigg\}\|x\|_H.
\eeq
Next, fix any $y$ such that $\|y\|_H= r\sqrt{d/n}$. By a Taylor expansion of $\u$ about zero there exists some $t=t(y)$ such that 
\beqsn
\frac{\u(y)-\u(0)}{\|y\|_H} &= \frac{\frac12\|y\|_H^2 + \frac16\la\nabla^3\u(ty), y^{\otimes 3}\ra}{\|y\|_H} \geq \frac12\|y\|_H - \frac16\|\nabla^3\u(ty)\|_H\|y\|_H^2\\
&\geq \frac12\|y\|_H - \frac{c_3(1)}{6}\|y\|_H^2 = \frac{ r}{2}\sqrt{d/n} - \frac{ r^2c_3( r)}{6}d/n.
\eeqsn Substituting this bound into~\eqref{check:log} gives the desired result. \end{proof}
\begin{lemma}\label{lma:Bellprops}
Let $t_1,\dots,t_\ell,\beta\geq0$. Then
\beqs
\beta^{\ell}B_\ell(t_1,\dots,t_\ell)&=B_\ell(\beta^1 t_1,\beta^2t_2,\dots,\beta^\ell t_\ell).
\eeqs
\end{lemma}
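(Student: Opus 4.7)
The plan is to unwind the definition of the complete Bell polynomial and exploit the weighted degree constraint in the sum. Recall from~\eqref{def:bell} that
\[
B_\ell(s_1,\dots,s_\ell)=\ell!\sum_{\substack{j_1+2j_2+\dots+\ell j_\ell=\ell\\ j_1,\dots,j_\ell\geq 0}}\prod_{i=1}^\ell\frac{s_i^{j_i}}{(i!)^{j_i}(j_i)!}.
\]
Substituting $s_i=\beta^i t_i$ on the right and separating the $\beta$ factors, each summand picks up $\prod_{i=1}^\ell \beta^{i j_i}=\beta^{\sum_{i=1}^\ell i j_i}$. The whole point is that the indexing constraint forces this exponent to equal $\ell$ regardless of the particular partition, so the factor $\beta^\ell$ is common to every term in the sum and can be pulled out.

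More explicitly, I would write
\[
B_\ell(\beta t_1,\beta^2 t_2,\dots,\beta^\ell t_\ell) = \ell!\sum_{\substack{j_1+2j_2+\dots+\ell j_\ell=\ell\\ j_1,\dots,j_\ell\geq 0}}\beta^{\,j_1+2j_2+\cdots+\ell j_\ell}\prod_{i=1}^\ell\frac{t_i^{j_i}}{(i!)^{j_i}(j_i)!} = \beta^\ell\,B_\ell(t_1,\dots,t_\ell),
\]
where the middle equality just substitutes the constraint $j_1+2j_2+\dots+\ell j_\ell=\ell$ into the exponent of $\beta$, and the last equality reapplies the definition of $B_\ell$.

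There is essentially no obstacle here; the only thing to notice is the homogeneity property that $B_\ell$ is of \emph{weighted} degree $\ell$ when $s_i$ is assigned weight $i$. The nonnegativity hypothesis $\beta\geq 0$ is not actually used for the identity itself (it makes sense for all real $\beta$), so the statement holds in that generality. This is the only fact needed at the one spot in the paper where it is invoked, namely in the derivation of~\eqref{Aktauk} in the proof of Lemma~\ref{lma:powers} and in the final step of the proof of Proposition~\ref{corr:B-Lp}, where one takes $\beta=\epsilon$ (respectively $\beta=\tau^{-1}$) to redistribute the small parameter across the arguments of the Bell polynomial.
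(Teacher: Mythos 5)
Your proof is correct and is essentially identical to the paper's: both unwind the definition~\eqref{def:bell} and use the constraint $j_1+2j_2+\dots+\ell j_\ell=\ell$ to pull $\beta^\ell$ out of (or push it into) the sum. Your observation that $\beta\geq 0$ is not actually used is also accurate, since the identity is a polynomial one.
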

\begin{proof}
Using the definition~\eqref{def:bell} of the Bell polynomials, we have
\beqsn
\beta^{\ell}B_\ell(t_1,\dots,t_\ell)  &= \beta^\ell\ell!\sum_{\substack{j_1+2j_2+\dots+\ell j_\ell=\ell\\ j_1,\dots,j_\ell\geq0}}  \prod_{i=1}^\ell\frac{t_i^{j_i}}{j_i!(i!)^{j_i}}\\
&=\ell!\sum_{\substack{j_1+2j_2+\dots+\ell j_\ell=\ell\\ j_1,\dots,j_\ell\geq0}}  \prod_{i=1}^\ell\frac{(\beta^it_i)^{j_i}}{j_i!(i!)^{j_i}} = B_\ell(\beta^1 t_1,\beta^2t_2,\dots,\beta^\ell t_\ell),\eeqsn as desired. 
\end{proof}
\begin{lemma}\label{lma:Bell:Lp}Let $f_1,\dots,f_\ell:\R^d\to\R$ and $\Omega\subset\R^d$, with $f_i\ind_\Omega\in L^{p\ell^2}(\gamma)$, $i=1,\dots,\ell$. Then
\beqs
\|B_\ell(f_1,\dots,f_\ell)\,\ind_\Omega\|_p &\leq  B_\ell(\|f_1\ind_\Omega\|_{p\ell^2},\dots,\|f_\ell\ind_\Omega\|_{p\ell^2}).
\eeqs
\end{lemma}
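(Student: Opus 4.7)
The plan is to exploit two simple facts: first, that the Bell polynomial $B_\ell$ has nonnegative coefficients, so it preserves pointwise absolute-value inequalities; second, that on the constraint set $j_1+2j_2+\dots+\ell j_\ell=\ell$ with $j_i\ge 0$, the total degree $r:=j_1+\dots+j_\ell$ satisfies $r\le \ell$. These two facts reduce the lemma to applying the generalized H\"older inequality monomial-by-monomial in the defining sum of $B_\ell$.

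More concretely, first I would write, using the definition~\eqref{def:bell},
\[
|B_\ell(f_1,\dots,f_\ell)|\,\ind_\Omega \le B_\ell(|f_1|,\dots,|f_\ell|)\,\ind_\Omega
= \ell!\sum_{\substack{j_1+2j_2+\dots+\ell j_\ell=\ell\\ j_1,\dots,j_\ell\ge 0}}\prod_{i=1}^\ell \frac{|f_i|^{j_i}\ind_\Omega}{(i!)^{j_i}(j_i)!},
\]
and then apply the triangle inequality in $L^p(\gamma)$ to the outer sum. This reduces the lemma to showing, for each admissible tuple $(j_1,\dots,j_\ell)$,
\[
\Bigl\|\prod_{i=1}^\ell |f_i|^{j_i}\ind_\Omega\Bigr\|_p \;\le\; \prod_{i=1}^\ell \|f_i\ind_\Omega\|_{p\ell^2}^{\,j_i}.
\]

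To prove this monomial bound I would apply the generalized H\"older inequality. Set $r=\sum_i j_i$; the constraint forces $r\le \ell$, hence $\sum_{i:j_i>0}\frac{j_i}{\ell^2}\le\frac{r}{\ell^2}\le\frac{1}{\ell}\le 1$. Therefore, choosing H\"older exponents $q_i=\ell^2/j_i$ for indices with $j_i>0$ and absorbing the remaining weight $1-\sum_i j_i/\ell^2\ge 0$ into the constant function (which has unit norm under the probability measure $\gamma$), H\"older gives
\[
\int \prod_{i=1}^\ell |f_i|^{p j_i}\ind_\Omega \,d\gamma \;\le\; \prod_{i:j_i>0}\Bigl(\int |f_i|^{p j_i q_i}\ind_\Omega\,d\gamma\Bigr)^{1/q_i} = \prod_{i:j_i>0}\|f_i\ind_\Omega\|_{p\ell^2}^{\,p j_i},
\]
which yields the desired monomial estimate after taking the $(1/p)$ power (with the convention $0^0=1$ for the $j_i=0$ factors).

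Combining the two steps and recognizing the resulting sum as $B_\ell(\|f_1\ind_\Omega\|_{p\ell^2},\dots,\|f_\ell\ind_\Omega\|_{p\ell^2})$ via~\eqref{def:bell} completes the proof. There is no real obstacle: the only point that requires a moment's thought is the bookkeeping $r\le\ell$ for the exponent $p\ell^2$ to be adequate, which is why the factor $\ell^2$ (rather than $\ell$) appears in the statement.
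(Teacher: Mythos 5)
Your proof is correct and follows essentially the same route as the paper's: expand $B_\ell$ via its defining sum, apply the triangle inequality in $L^p(\gamma)$, and control each monomial by generalized H\"older together with the bookkeeping that $r=\sum_i j_i$ (equivalently each $j_i$) cannot exceed $\ell$, which is why $p\ell^2$ suffices. The only difference is cosmetic in the choice of H\"older exponents—the paper uses $\ell$ equal exponents to get $\|f_i\ind_\Omega\|_{p\ell j_i}^{j_i}$ and then invokes monotonicity of $L^q$-norms on the probability space with $j_i\le\ell$, whereas you take exponents $\ell^2/j_i$ directly and absorb the residual weight $1-r/\ell^2$ into the constant function.
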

\begin{proof}
Using the definition~\eqref{def:bell} of the Bell polynomials, we have
\beqs
\|B_\ell(f_1,\dots,f_\ell) \ind_\Omega\|_{p} \leq \ell!\sum_{\substack{j_1+2j_2+\dots+\ell j_\ell=\ell\\ j_1,\dots,j_\ell\geq0}}\l \| \prod_{i=1}^\ell f_i^{j_i}\ind_\Omega  \r \|_{p}\prod_{i=1}^\ell\frac{1}{j_i!(i!)^{j_i}}.\eeqs  Next, using the generalized H\"{o}lder's inequality, we have
$$ \l \| \prod_{i=1}^\ell f_i^{j_i} \ind_\Omega \r \|_{p} = \l \| \prod_{i=1}^\ell f_i^{pj_i}\ind_\Omega \r\|_1^{1/p} \leq \prod_{i=1}^\ell \|f_i^{pj_i}\ind_\Omega\|_{\ell}^{1/p}= \prod_{i=1}^\ell\l \| f_i \ind_\Omega\r \|_{p\ell j_i}^{j_i} \leq  \prod_{i=1}^\ell\l \| f_i \ind_\Omega\r \|_{p\ell^2}^{j_i},$$ noting that $j_i\leq\ell$ for all $j_i$ appearing in the sum. We conclude that
$$
\|B_\ell(f_1,\dots,f_\ell)\ind_\Omega \|_{p} \leq \ell!\sum_{\substack{j_1+2j_2+\dots+\ell j_\ell=\ell \\ j_1,\dots,j_\ell\geq0}}\; \prod_{i=1}^\ell \frac{\|f_i \ind_\Omega\|_{p\ell^2}^{j_i}}{j_i!(i!)^{j_i}} = B_\ell(\|f_1\ind_\Omega\|_{p\ell^2},\dots,\|f_\ell\ind_\Omega\|_{p\ell^2}),
$$ as desired.
\end{proof}

\bibliographystyle{plain} 
\bibliography{bibliogr_Lap_expansion}   
\end{document}